\renewcommand\section{\@startsection {section}{1}{\z@}%
                                   {-3.5ex \@plus -1ex \@minus -.2ex}%
                                   {2.3ex \@plus.2ex}%
                                   {\normalfont\large\bfseries}}
\renewcommand\subsection{\@startsection{subsection}{2}{\z@}%
                                     {-3.25ex\@plus -1ex \@minus -.2ex}%
                                     {1.5ex \@plus .2ex}%
                                     {\normalfont\normalsize\bfseries}}
\newtheoremstyle{styleDef}{10pt}{10pt}{\upshape}{}{\scshape}{.}{7pt}{\thmnumber{{\upshape\bfseries#2.}\ }\thmname{#1}\thmnote{ #3}}
\newtheoremstyle{styleThm}{10pt}{10pt}{\slshape}{}{\scshape}{.}{7pt}{\thmnumber{{\upshape\bfseries#2.}\ }\thmname{#1}\thmnote{ #3}}
\theoremstyle{styleDef}
  \newtheorem{Definition}{Definition}
\theoremstyle{styleThm}
 \newtheorem{Theorem}[Definition]{Theorem}
 \newtheorem{Proposition}[Definition]{Proposition}
 \newtheorem{Lemma}[Definition]{Lemma}
 \newtheorem{Corollary}[Definition]{Corollary}
\let\le\leqslant  % less or equal
\let\ge\geqslant  % greater or equal
\DeclareMathOperator\tr{tr}      % trace
\DeclareMathOperator\ad{ad}      % adjoint representation
\DeclareMathOperator\Span{span}  % spanned vector space
\newcommand*\id{\mathrm{id}}              % identity
\newcommand*\ave[1]{\langle #1 \rangle}   % average
\newcommand*\scal[2]{\langle #1\mid\nobreak #2\rangle} % scalar product
\newcommand*\abs[1]{\left|#1\right|}      % absolute value
\newcommand*\norm[1]{\left\|#1\right\|}   % norm
\newcommand*\card[1]{\left|#1\right|}     % cardinal of a set
\newcommand*\ZZ{\mathbb{Z}}      % set of integers
\newcommand*\RR{\mathbb{R}}      % set of real numbers
\newcommand*\CC{\mathbb{C}}      % set of complex numbers
\newcommand*\PP{\mathbb{P}}      % projective space
\newcommand*\Se{\mathbb{S}}      % sphere
\DeclareMathOperator\Mat{Mat}    % matrices
\DeclareMathOperator\End{End}    % linear endomorphisms
\newcommand*\Orth{\mathbf{O}}    % orthogonal group
\newcommand*\Sp{\mathbf{Sp}}     % symplectic group
\newcommand*\SL{\mathbf{SL}}     % special linear group
\newcommand*\GL{\mathbf{GL}}     % general linear group
\newcommand*\so{\mathfrak{so}}   % special orthogonal Lie algebra
\newcommand*\slin{\mathfrak{sl}} % special linear Lie algebra
\newcommand*\glin{\mathfrak{gl}} % general linear Lie algebra
\newcommand*\G{\mathbf{G}}       % group
\newcommand*\tildeG{\mathbf{\tilde G}}
\newcommand*\GK{\mathbf{K}}      % orthogonal subgroup
\newcommand*\GP{\mathbf{P}}      % positive definite part of group
\newcommand*\GD{\mathbf{D}}      % triangular group
\newcommand*\GZ{\mathbf{Z}}      % center of group
\newcommand*\g{\mathfrak{g}}     % Lie algebra
\newcommand*\gk{\mathfrak{k}}    % orthogonal Lie subalgebra
\newcommand*\gp{\mathfrak{p}}    % selfdual part of Lie algebra
\newcommand*\gz{\mathfrak{z}}    % center of Lie algebra
\newcommand*\pp{\mathfrak{p}}    % graded subspaces of associative algebra
\newcommand*\Fc{\mathcal{F}}     % space of functions
\newcommand*\Q{\mathcal{Q}}
\newcommand*\Z{\mathcal{Z}}
\newcommand*\pf[1]{${#1}$\nobreakdash}
\newcommand*\smz{\setminus\{0\}}
\newcommand*\rbreak{\penalty\relpenalty}
\newcommand*\quartet[4]{(#1,\rbreak#2,\rbreak#3,\rbreak#4)}
\newcommand*\Quartet[4]{\bigl(#1,\rbreak#2,\rbreak#3,\rbreak#4\bigr)}
\begin{document}

\title{Extremality and designs in spaces of quadratic forms}
\author{Claude \textsc{Pache}%
  \footnote{%
    The author acknowledges support from the \emph{Swiss National Science Foundation}.\endgraf
    The author thanks the Institut de Math\'ematiques of the Universit\'e de Bordeaux~1 (France)
    for its hospitality during the preparation of this work.\endgraf
    Claude Pache,
    Universit\'e de Gen\`eve, Section de Math\'ematiques,
    C.P.~64, 1211 Gen\`eve 4, Switzerland.
    Email: Claude.Pache@gmail.com\endgraf
    MSC 2000: 11H55 (05B30, 11M41)\endgraf
    Keywords: quadratic form; extreme form; design; Epstein zeta function
    }\\
  }
\date{19 May 2008}

\maketitle

\begin{abstract}
A well known theorem of Voronoi
characterizes extreme quadratic forms and Euclidean lattices,
that is those which are local maxima for the Hermite function,
as perfect and eutactic.
This characterization has been extended in various cases,
such that family of lattices,
sections of lattices,
Humbert forms, etc.
Moreover, there is a criterion for extreme lattices, discovered by Venkov,
formulated in terms of spherical designs
which has been extended
in the case of Grassmannians and
sections of lattices.

In this article, we define a general frame,
in which there is a ``Voronoi characterization'',
and a ``Venkov criterion''
through an appropriate notion of design.
This frame encompasses many interesting situations
in which a ``Voronoi characterization'' has been proved.

We also discuss the question of
extremality relatively to the Epstein zeta function,
and we extend to our frame 
a characterization of final zeta-extremality
formulated by Delone and Ryshkov
and a criterion in terms of designs
found by Coulangeon.
\end{abstract}

Consider the following function on positive definite quadratic forms
on $\RR^n$ of determinant~1,
\begin{equation*}
  \gamma_n(Q) = \min_{z\in\ZZ^n\smz} Q(z),
\end{equation*}
which is called the \emph{Hermite function},
and its maximal value, $\gamma_n$,
which is the \emph{Hermite constant}.
The problem of estimating $\gamma_n$ is an important subject
of research,
and the systematic investigation of \emph{extreme} forms---%
the forms which are local maxima for the Hermite function---%
comes back to Korkine and Zolotarev in the nineteenth century.

The problem has an equivalent formulation in terms of Euclidean lattices:
For a lattice $\Lambda\subseteq\RR^n$ of determinant~$1$, we define:
\begin{equation*}
  \gamma_n(\Lambda) = \min_{z\in\Lambda\smz} \scal{z}{z}.
\end{equation*}
The Hermite constant $\gamma_n$ is equal to the maximal value
of this function among all Euclidean lattices of dimension~$n$
and determinant~$1$.
Both formulations have their own advantages and drawbacks;
the latter one provides a more graphical representation
of the problem and is naturally related to
the problem of finding densest ball packings in Euclidean space.

A famous theorem of Voronoi (1908) \cite{Voronoi} characterizes
extreme forms as perfect and eutactic
(the definition of these notions is given later).
This theorem has been extended in various situation,
as dual-extremality of lattices
(Berg\'e, Martinet, 1989 \cite{DualExtreme}),
extremality in families of lattices
(Berg\'e, Martinet, 1991 \cite{ExtremeAutomorphism}, \cite{LatticeFamily}),
extremality for the Rankin function of lattices
(Coulangeon, 1996 \cite{CoulangeonkExtreme}),
extremality of Humbert forms
(Coulangeon, 2001 \cite{CoulangeonVenkov}),
and extremality in a more general frame,
which includes systoles on a Riemannian manifold
(Bavard, 1997 \cite[Theorem~2.1]{Bavard}).

A relation between extreme lattices and so-called \emph{spherical designs}
has been discovered by Boris Venkov (1998), who has shown that a lattice
whose minimal layer forms a \pf{4}-design is perfect and eutactic
\cite{VenkovMartinet1}.
Recall that the \emph{minimal layer} of a Euclidean lattice $\Lambda$
is the set of vectors $x$ of~$\Lambda$ such that $\scal{x}{x}$
is minimal among all nonzero vectors of $\Lambda$,
and that
a \emph{spherical \pf{\tau}-design}
is a finite subset $X$ of the Euclidean sphere $\Se^{n-1}$ such that
\begin{equation*}
  \frac{1}{\card{X}}\sum_{x\in X} f(x) 
  = \frac{1}{\mathrm{vol}(\Se^{n-1})}\int_{\Se^{n-1}} f(x)
\end{equation*}
for all polynomial functions $f:\RR^n\to\RR$ of degree at most~$\tau$.
The criterion of Venkov has been extended to
extremality relatively to the Rankin function
through an appropriate notion of design in Grassmannian spaces
(Bachoc, Coulangeon, Nebe, 2002 \cite{DesignGrassmannian}).

There is another kind of extremality for quadratic forms and lattices,
namely the extremality of the so-called Epstein zeta function :
To a quadratic form $Q$ on $\RR^n$, and a complex number $s$, we associate
the series
\begin{equation*}
  \zeta(Q,s) = \sum_{x\in\ZZ^n\smz} Q(x)^{-s},
\end{equation*}
which converges for $\Re s> n/2$. The corresponding series for a lattice
$\Lambda\subseteq\RR^n$ is
\begin{equation*}
  \zeta(\Lambda,s) = \sum_{x\in\Lambda\smz} \scal{x}{x}^{-s}.
\end{equation*}
A quadratic form or a lattice is \emph{\pf{\zeta}-extreme} at $s>n/2$
when the value of its zeta function at~$s$ is locally minimal
among quadratic forms or lattices of same determinant.
A characterization of finally \pf{\zeta}-extreme lattices---%
that is lattices which are \pf{\zeta}-extreme at~$s$ for all
$s$ large enough---has been formulated by
Delone and Ryshkov (1967) \cite{DeloneRyshkov}: a Euclidean lattice
is finally \pf{\zeta}-extreme if and only if it is perfect and all its layers
are strongly eutactic. This result may be seen as
a version for final \pf{\zeta}-extremality
of Voronoi's characterization of extremality.
Such a criterion has also been recently proved
for Humbert forms by Coulangeon~\cite{CoulangeonEpsteinHumbert}.

As for classic extremality, there is a criterion for \pf{\zeta}-extremality
in terms of designs, discovered by Coulangeon (2006) \cite{CoulangeonEpstein}:
if all layers of a lattice of dimension~$n$ are spherical \pf{4}-designs,
then it is \pf{\zeta}-extreme for all $s>n/2$.

\medskip

The goal of this paper is to provide a theoretical frame in which the four
results on extremality mentioned above hold.
Our paper is organized as follows:

\begin{itemize}
  \item In Section~\ref{sectVoronoiSpaces},
    we give the definitions and main properties
    of our frame, that we call \emph{Voronoi space},
    and we give examples of Voronoi spaces,
    showing how to integrate in our frame
    most of the cases mentioned in the introduction;
  \item In Section~\ref{sectGroupVoronoiSpace},
    we use some theory of linear Lie groups
    in order to establish the structure of the group
    of a Voronoi space, which is a fundamental object in
    our theory;
  \item In Section~\ref{sectCharacterizationVoronoi},
    we prove a characterization \`a la Voronoi
    of extremality and strict extremality
    in Voronoi spaces
    (Theorem~\ref{thmVoronoi}).
    We also establish a property which, in many cases,
    may be used to show easily the
    equivalence of extremality and strict extremality
    (Proposition~\ref{propWeaklyPerfectOnly});
  \item In Section~\ref{sectAQV}, we introduce
    \emph{alternative Voronoi spaces}, which are essentially equivalent
    to Voronoi spaces with respect to our problem of extremality,
    just as quadratic forms are equivalent to lattices with respect to the
    Hermite function;
  \item In Section~\ref{sectDesign}, we define a notion of design for appropriate
    alternative Voronoi spaces, and we prove
    the criterion corresponding to the criterion of Venkov
    (Theorem~\ref{thmVenkovCriterion}
    and Corollary~\ref{corPerfectionCriterion});
  \item Finally, the problem of \pf{\zeta}-extremality is introduced
    in Section~\ref{sectEpstein}, and the corresponding characterization
    of final extremality
    (Theorem~\ref{thmDeloneRyshkov})
    and the criterion using designs
    (Theorem~\ref{thmVenkovEpstein})
    are proved.
\end{itemize}

The theory developed in this paper is not the first attempt to provide
a unification of the problem of extremality introduced above.
In particular, the reader may check that our notion of Voronoi space
is a particular case of a more general frame introduced by Christophe Bavard,
in which there is also a ``Voronoi characterization'' under a
condition of convexity, called ``Condition~C'' \cite[\S2.2]{Bavard}.
In our frame, we do not need such a condition,
but instead we use the structure of the Lie group acting on the
situation to obtain the convexity property
given by Lemma~\ref{lemEstimationNeighborhood}
and the reducibility criterion given by Proposition~\ref{propWeaklyPerfectOnly}.
Moreover, there is no natural notion of design in the frame of Bavard.
On the other hand, our frame does not encompass
all interesting cases;
in particular, we have not succeeded to apply it
to the problem of systoles on Riemannian manifolds,
that Bavard has studied in the cited paper.

Note also that the characterization of \pf{\zeta}-extremality
of Delone and Ryshkov
(Theorem~\ref{thmDeloneRyshkov}) may also be proved in
the frame of Bavard.

\section{Voronoi spaces}\label{sectVoronoiSpaces}

As stated in the introduction, the goal of this paper
is to provide a practical frame in which there is
an equivalent of the
characterization of extremality of
Voronoi~\cite{Voronoi} and some other results on extremality
that have been extended in various cases.
The notion of \emph{Voronoi space} we introduce here
is based on the observation of
features which often appear in the frames where those different
results of extremality have been proved.
There are: a ground space that can
be expressed as real submanifold of the cone of positive definite
quadratic forms on a
finite-dimensional real vector space,
and the high symmetry of that ground space
which is expressed by the existence
of a Lie group acting naturally and transitively on it.

In the last paragraph of this section (Paragraph~\ref{parExamples}),
we give some examples of Voronoi spaces
corresponding to situations studied before by others.

\subsection{General conventions and notations}
Let $V$ be a finite-dimensional real vector space.
If $Q$ is a quadratic form on~$V$, we denote by the same symbol $Q$
the associated symmetric bilinear form. The relation between the two is:
\begin{gather*}
  Q(x,y) = \frac12 \Bigl( Q(x+y)-Q(x)-Q(y) \Bigr),\\
  Q(x) = Q(x,x).
\end{gather*}
For a nondegenerate quadratic form (or symmetric bilinear form) $Q$ on $V$,
and a linear transformation~$H$ of $V$, we denote by $H^*_Q$
the \emph{\pf{Q}-adjoint} of $H$,
that is the unique linear transformation of $V$ such that
$Q(x,Hy)=Q(H^*_Qx,y)$ for every $x,y\in V$.
The linear transformation~$H$ is \emph{\pf{Q}-selfadjoint},
respectively \emph{\pf{Q}-antiselfadjoint},
respectively \emph{\pf{Q}-orthogonal},
when $H^*_Q=H$, respectively $H^*_Q=-H$, respectively $H^*_Q=H^{-1}$.

Let $Q$ be a positive definite quadratic form on $V$.
We define the norm $\norm{\cdot}_{Q}$ on the associative algebra~$\End(V)$
of linear endomorphisms of~$V$ by:
\begin{equation*}
  \norm{H}_Q^2 = \sup_{x\in V\smz}\frac{Q(Hx)}{Q(x)}, \quad H\in\End(V).
\end{equation*}

If $x$ is an element or a subset in $V\smz$,
we denote by $[x]$ the corresponding element or subset in the projective space
$\PP(V)$, that is the image of $x$ through the projection $V\smz\to\PP(V)$.

\subsection{Voronoi space}
Here is the frame in which we do our theory:

\begin{Definition}\label{defVoronoiSpace}
  A \emph{Voronoi space} $\quartet{V}{\Q}{Z}{\G}$ is the data of
  \begin{enumerate}[(a)]
    \item a finite-dimensional real vector space $V$,
    \item a connected submanifold $\Q$ in the half-cone
      of positive definite quadratic forms on $V$,
    \item a closed discrete subset $Z$ of $V$ not containing~$0$
      and spanning~$V$,
    \item a closed linear Lie group $\G<\GL(V)$,
  \end{enumerate}
  satisfying the following properties:
  \begin{enumerate}[(i)]
    \item For $Q\in\Q$ and $c>0$, we have $cQ\in\Q$ only if $c=1$;
    \item for all $g\in\G$ and $Q\in\Q$, we have $g^*_Q\in\G$;
    \item for all $g\in\G$ and $Q\in\Q$, we have $Q\circ g\in\Q$,
      and $\G$ is transitive on $\Q$.
  \end{enumerate}
\end{Definition}

The point of restriction~(i)
is to avoid to deal with some special cases which would 
be anyway trivially uninteresting for the problems
of extremality which occupy us in this paper.

The standard example of Voronoi space is $V=\RR^n$ with $n\ge2$,
and $\Q$ the set of positive definite quadratic forms of determinant~1,
together with $Z=\ZZ^n\smz$ and $\G=\SL^{\pm}(n,\RR)$, where
\begin{equation}
  \SL^{\pm}(n,\RR) = \{ g\in \GL(n,\RR) \mid \det g =\pm1 \}.
\end{equation}
We refer to this example as the ``classic Voronoi space
of rank~$n$''.
Before we give other examples, we fix some general notations and properties
of Voronoi spaces.

The group $\G$ of the definition is the \emph{group} of the Voronoi space.
Given a Voronoi space $\quartet{V}{\Q}{Z}{\G}$, there exist two linear groups $\G_{\min}$
and $\G_{\max}$ such that $\quartet{V}{\Q}{Z}{\tildeG}$ is a Voronoi space
if and only if we have $\G_{\min}<\tildeG<\G_{\max}$.
Moreover, $\quartet{V}{\Q}{Z}{\mathbf{\tilde G}}$ is equivalent to $\quartet{V}{\Q}{Z}{\G}$
with respect to the problems of extremality
that occupies us in this paper.
For the classic Voronoi space of rank~$n$,
we have $\G_{\min}=\SL(n,\RR)$ and $\G_{\max}=\SL^{\pm}(n,\RR)$.

\subsection{Group of a Voronoi space}
The group of a Voronoi space and the related objects
play an important role in our theory,
as they assure the ``convexity'' of the functions
$Q\mapsto Q(x)$, on $\Q$ (Lemma~\ref{lemEstimationNeighborhood}).

\begin{Definition}\label{defGroup}
  Let $\quartet{V}{\Q}{Z}{\G}$ be a Voronoi space,
  and let $Q\in\Q$.
  Then $\G$ is the \emph{group} of the Voronoi space.
  Moreover, we define the following objects:
  \begin{itemize}
    \item $\GK_Q=\{g\in\G \mid g^*_Q=g^{-1}\}$, the (compact) group
      of \pf{Q}-selfadjoint elements of~$\G$;
    \item $\g$, the Lie algebra of~$\G$;
    \item $\gk_Q = \{H\in\g\mid H+H^*_Q=0\}$, the Lie subalgebra of
      \pf{Q}-antiselfadjoint elements of~$\g$,
      which is the Lie algebra of~$\GK_Q$;
    \item $\gp_Q = \{H\in\g\mid H-H^*_Q=0\}$, the set of \pf{Q}-selfadjoint
      elements of~$\g$;
    \item $\G^0$ and $\GK_Q^0$, the connected component of the identity
      of $\G$ and $\GK_Q$ respectively.
  \end{itemize}
Note that we have $\g=\gk_Q\oplus\gp_Q$,
and, for any $c\in\RR$, we have $c\,\id_V\in\G$ if and only if $c=\pm1$,
and $c\,\id_V\in\g$ if and only if $c=0$.
\end{Definition}

\begin{Lemma}\label{lemConjugate}
  Let $Q,Q'\in\Q$ and let $g\in\G$ such that $Q'=Q\circ g$. Then
  \begin{equation*}
    \gp_{Q'} = g^{-1} \gp_{Q}g,
    \quad\text{and}\quad \gk_{Q'} = g^{-1} \gk_{Q}g.
  \end{equation*}
\end{Lemma}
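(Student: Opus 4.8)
The plan is to reduce the whole statement to a single transformation law for the \pf{Q}-adjoint under the substitution $Q'=Q\circ g$, and then to invoke the invariance of $\g$ under conjugation by elements of $\G$.

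First I would compute how the adjoint transforms. Since $Q'=Q\circ g$ means $Q'(x,y)=Q(gx,gy)$ for all $x,y\in V$, I would rewrite the defining identity $Q'(x,Hy)=Q'(H^*_{Q'}x,y)$ as $Q(gx,gHy)=Q(gH^*_{Q'}x,gy)$, substitute $u=gx$ and $v=gy$ (legitimate since $g$ is invertible), and read off, using the definition of the \pf{Q}-adjoint, the formula
\begin{equation*}
  H^*_{Q'} = g^{-1}\,(g H g^{-1})^*_Q\, g,
\end{equation*}
valid for every $H\in\End(V)$.

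Second I would record that $\g$ is stable under conjugation by elements of $\G$: for $g\in\G$ the map $H\mapsto gHg^{-1}$ is the adjoint action $\mathrm{Ad}(g)$, which carries the Lie algebra $\g$ of $\G$ bijectively onto itself. Hence $H\in\g$ if and only if $gHg^{-1}\in\g$.

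Finally I would combine the two observations. For $H\in\g$, the displayed formula gives $H^*_{Q'}=H$ if and only if $(gHg^{-1})^*_Q=gHg^{-1}$; since $gHg^{-1}\in\g$ by the previous step, the right-hand condition says precisely that $gHg^{-1}\in\gp_Q$, that is $H\in g^{-1}\gp_Q g$. This proves $\gp_{Q'}=g^{-1}\gp_Q g$. Running the identical argument with the condition $H^*_{Q'}=-H$, equivalently $(gHg^{-1})^*_Q=-gHg^{-1}$, yields $\gk_{Q'}=g^{-1}\gk_Q g$. The computation is routine throughout; the only point requiring care is the derivation of the adjoint transformation law, where one must track which variable the substitution $u=gx$, $v=gy$ acts on, so as not to confuse $gHg^{-1}$ with $g^{-1}Hg$. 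The invariance of $\g$ under $\mathrm{Ad}(g)$ is standard for linear Lie groups and presents no obstacle.
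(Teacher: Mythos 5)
Your proof is correct and takes essentially the same route as the paper: both derive the transformation law of the \pf{Q}-adjoint under $Q'=Q\circ g$ (your identity $H^*_{Q'}=g^{-1}\,(gHg^{-1})^*_Q\,g$ coincides with the paper's $A^*_{Q'}=(g^*_Qg)^{-1}A^*_Q(g^*_Qg)$ upon expanding $(gHg^{-1})^*_Q=(g^*_Q)^{-1}H^*_Q\,g^*_Q$) and then read off the selfadjoint and antiselfadjoint conditions. Your explicit invocation of the $\mathrm{Ad}(g)$-stability of $\g$ makes precise a point the paper leaves tacit, but it is the same argument.
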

\begin{proof}
  Let $H\in\g$.
  We have, for any linear endomorphism~$A$ of~$V$,
  \begin{equation*}
    A^*_{Q'} = (g^*_Qg)^{-1} A^*_{Q} (g^*_Qg);
  \end{equation*}
  in particular, setting $A=g^{-1}Hg$, we get
  \begin{equation*}
    (g^{-1}Hg)^*_{Q'}
    = (g^*_Qg)^{-1} (g^{-1}Hg)^*_{Q} (g^*_Qg)
    = g^{-1} H^*_{Q}g.
  \end{equation*}
  Therefore $H$ is \pf{Q}-selfadjoint, respectively \pf{Q}-antiselfadjoint,
  if and only if $g^{-1}Hg$ is \pf{Q'}-selfadjoint,
  respectively \pf{Q'}-antiselfadjoint.
\end{proof}

\subsection{Layers, Hermite function and extremality}

\begin{Definition}\label{defLayerQV}
  Let $\quartet{V}{\Q}{Z}{\G}$ be a Voronoi space.
  \begin{enumerate}[(i)]
    \item The \emph{Hermite function} associated to $\quartet{V}{\Q}{Z}{\G}$ is the
    function $\gamma$ on $\Q$ defined by
      \begin{equation*}
        \gamma(Q) = \min_{x\in Z} Q(x),\quad Q\in\Q.
      \end{equation*}
    \item For $r>0$, the \emph{layer} of square radius~$r$ of a quadratic form $Q\in\Q$ is the set
      \begin{equation*}
        Q_{r} = \{x\in Z \mid Q(x)=r \}.
      \end{equation*}
    \item The set of \emph{minimal vectors} of a quadratic form $Q\in\Q$
      is the nonempty layer of minimal radius, that is
      \begin{equation*}
        Q_{\min} = Q_{\gamma(Q)} = \{x\in Z \mid Q(x)=\gamma(Q) \}.
      \end{equation*}
    \item A quadratic form $Q\in\Q$ is \emph{extreme}, respectively \emph{strictly extreme},
      when it is a local maximum, respectively a strict local maximum, of~$\gamma$.
  \end{enumerate}
\end{Definition}
In the case of the classic Voronoi space of rank~$n$,
the function~$\gamma$ is the ``classic'' Hermite function,
and its maximal value is the Hermite constant $\gamma_n$.

The main goal of this paper is to find criteria for extreme forms
in Voronoi spaces.
The first question to consider is
if the supremum of $\gamma$
 on $\Q$ is indeed a local maximum.
 Sadly, we have not found any easy criterion to check this property;
 nevertheless, the following definition is justified
 by many interesting examples of Voronoi spaces.

Let $\quartet{V}{\Q}{Z}{\G}$  be a Voronoi space,
and consider the discrete subgroup
\begin{equation*}
  \mathbf{H} = \{g\in\G \mid gZ=Z\}.
\end{equation*}
The quotient $\Q/\mathbf{H}$ is separated, and we have a map
\begin{align*}
  \bar{\gamma}:\Q/\mathbf{H} & \longrightarrow ]0,\infty[ \\
   Q\mathbf{H} & \longmapsto \gamma(Q),
\end{align*}

\begin{Definition}\label{defCompact}
  The Voronoi space $\quartet{V}{\Q}{Z}{\G}$ is \emph{compact}
  when $\bar{\gamma}$
  is bounded and proper.
\end{Definition}

In a compact Voronoi space,
the supremum of the Hermite function is a also a local maximum.

The statement that classic Voronoi spaces are compact
is essentially equivalent to Mahler's Compactness Theorem
(see, e.g., \cite[\S2.4]{MartinetBook}).

\subsection{Examples}\label{parExamples}

The examples of Voronoi space we give here
show how to integrate most cases cited in the introduction
in our frame.
All these examples have an alternative formulation,
which is made explicit in Section~\ref{sectAQV}.

\begin{enumerate}[1.]
  \item\label{exClassical} \textit{Classic spaces.}
    Let $V=\RR^n$ with $n\ge2$,
    let $\Q$ be the set of positive definite quadratic forms on $\RR^n$
    of determinant~$1$, let $Z=\ZZ^n\smz$,
    and let $\G=\SL^{\pm}(n,\RR) = \{g\in\GL(n,\nobreak\RR) \mid \det g=\pm1 \}$.
    Then $\quartet{V}{\Q}{Z}{\G}$ is a compact Voronoi space with
    the following parameters, where $Q_0$ is the canonical quadratic form
    on $\RR^n$:
    \begin{gather*}
      \GK_{Q_0} = \{g\in\Mat(n,\RR) \mid g^*=g^{-1}\} = \Orth(n), \\
      \g = \{H\in\Mat(n,\RR) \mid \tr H=0\} = \slin(n,\RR),\\
      \gk_{Q_0} = \{H\in\Mat(n,\RR) \mid H+H^*=0\ \text{and}\ \tr H=0\} = \so(n,\RR),\\
      \gp_{Q_0} = \{H\in\Mat(n,\RR) \mid H^*=H\ \text{and}\ \tr H=0\}.
    \end{gather*}
    For this space, the Hermite function is the classic Hermite function
    (restricted to quadratic forms of determinant~$1$),
    and its maximal value is the classic Hermite constant $\gamma_n$.
    
  \item\label{exDuality} \textit{Duality.}
    Let $\quartet{V}{\Q}{Z}{\G}$ be a Voronoi space.
    Let $Q\mapsto Q^\alpha$ be an involution of~$\Q$,
    and $g\mapsto g^\alpha$ an involution and automorphism of~$\G$
    such that $(g^*_Q)^\alpha = (g^\alpha)^*_{Q^\alpha}$ and
    \begin{equation*}
      (Q\circ g)^\alpha = Q^\alpha\circ g^\alpha,\quad
      \forall g\in G,\ \forall Q\in\Q.
    \end{equation*}
    Then we define a Voronoi space
    $\quartet{\tilde{V}}{\tilde{\Q}}{\tilde{Z}}{\tildeG}$ as follows: 
    \begin{gather*}
      \tilde{V} = V\times V \\
      \tilde{\Q} = \bigl\{\tilde{Q}
        \bigm| \tilde{Q}\bigl((x_1,x_2)\bigr) =Q(cx_1)+Q^\alpha(c^{-1}x_2),
        \text{for some $Q\in\Q$ and $c\in\RR^*$} \bigr\}, \\
      \tilde{Z} = \bigl(Z\times\{0\}\bigr) \cup \bigl(\{0\}\times Z\bigr),\\
      \begin{split}
        \tildeG = \Bigl\{ \tilde{g} \in\GL(V\times V)
          &\Bigm| \tilde{g} =
            \begin{pmatrix} cg & 0 \\ 0 & c^{-1}g^\alpha \end{pmatrix}\ \text{or}\
            \begin{pmatrix} 0 & cg \\ c^{-1}g^\alpha & 0 \end{pmatrix}\\[-2\jot]
          &\qquad\qquad\qquad\text{for some $g\in\G$ and $c\in\RR^*$} \Bigr\}.
      \end{split}
    \end{gather*}
    For $Q\in\Q$ and $c\in\RR^*$ let $\tilde{Q}_c\in\tilde{\Q}$
    such that
    $\tilde{Q}_c((x_1,\nobreak x_2))=Q(cx_1)+Q^\alpha(c^{-1}x_2)$.
    The Hermite function $\tilde{\gamma}$
    of $\quartet{\tilde{V}}{\tilde{\Q}}{\tilde{Z}}{\tildeG}$
    is equal to
    \begin{equation*}
      \tilde\gamma(\tilde{Q}_c)
      = \min\bigl(c^2\gamma(Q), c^{-2}\gamma(Q^\alpha)\bigr).
    \end{equation*}
    For a given $Q\in\Q$, the maximum of $\gamma(\tilde{Q}_c)$
    among all values of $c$ is attained when $c^2\gamma(Q) = c^{-2}\gamma(Q^\alpha)$,
    that is when $c^2=\gamma(Q)^{-1/2}\*\,\gamma(Q^\alpha)^{1/2}$,
    and, for this value of~$c$, we have
    \begin{equation*}
      \tilde\gamma(\tilde{Q}_c)= \bigl(\gamma(Q)\,\gamma(Q^\alpha)\bigr)^{1/2}.
    \end{equation*}
    Therefore, the local maxima of the Hermite function $\tilde\gamma$
    on $\tilde{\Q}$
    correspond to the maxima of the function
    $\gamma'(Q)= \bigl(\gamma(Q)\*\,\gamma(Q^\alpha)\bigr)^{1/2}$
    on $\Q$.
    A quadratic form $Q\in\Q$ such that the corresponding form
    $\tilde{Q}_c$ is extreme is called \emph{dual-extreme}.

    If $\quartet{V}{Q}{Z}{\G}$ is compact and
    if we have $g^\alpha Z=Z$ for every $g\in\G$ satisfying $gZ=Z$, then
    $\quartet{\tilde{V}}{\tilde{\Q}}{\tilde{Z}}{\tildeG}$ is compact.
  
    As a particular case, we take for $\quartet{V}{\Q}{Z}{\G}$ the classic Voronoi
    space of rank~$n$, with $g^\alpha=g^{{*}{-1}}$ and
    $Q^\alpha=Q^{-1}$, where $g^*$ denotes the adjoint of~$g$
    for the canonical scalar product on $V=\RR^n$, and where
    $Q^{-1}(x)=\scal{x}{A^{-1}x}$ if $Q(x)=\scal{x}{Ax}$.
    The maximal value $\gamma'_n$ of the Hermite function
    on the corresponding space $\quartet{\tilde{V}}{\tilde{\Q}}{\tilde{Z}}{\tildeG}$
    is known as \emph{Berg\'e-Martinet constant},
    and has been first studied in \cite{DualExtreme}.
    
  \item\label{exOrbit} \textit{Family of quadratic forms.}
    Let $\quartet{V}{\Q}{Z}{\G}$ be a Voronoi space,
    let $Q$ be an element of~$\Q$, 
    and let $\mathbf{H}$ be a connected closed subgroup of $\G$
    such that $g^*_Q\in\mathbf{H}$ when $g\in\mathbf{H}$.
    Let $Q\mathbf{H}$ be the orbit of $Q$ in $\Q$.
    Then $\quartet{V}{Q\mathbf{H}}{Z}{\mathbf{H}}$ is a Voronoi space,
    which is compact if $\quartet{V}{\Q}{Z}{\G}$ is compact.
    
    When $\quartet{V}{\Q}{Z}{\G}$ is a classic Voronoi space,
    the situation (but formulated in terms of lattices instead of
    quadratic forms, see Paragraph~\ref{parExamplesBis}) has been studied by
    Anne-Marie Berg\'e and Jacques Martinet
    in \cite{LatticeFamily}.
    As special cases, we have:
    \begin{enumerate}[(a)]
      \item 
        Let $\mathbf{\Gamma}$ be a finite subgroup of $\G$
        such that $\gamma Z=Z$ for every $\gamma\in\mathbf{\Gamma}$.
        An element $Q\in\Q$ such that the elements of $\mathbf{\Gamma}$
        are \pf{Q}-orthogonal is called
        \emph{\pf{\mathbf{\Gamma}}-invariant}.
        
        Let $Q\in\Q$ be a \pf{\mathbf{\Gamma}}-invariant quadratic form,
        and let $\mathbf{H}$ be the connected component of the identity
        of the centralizer of $\mathbf{\Gamma}$ in $\G$.
        The group $\mathbf{H}$ is \pf{Q}-selfadjoint,
        because so is $\mathbf{\Gamma}$. However, the following
        fact is less evident:
        \begin{Lemma}\label{lemExampleInvariant}
        $Q\mathbf{H}$ is the set of \pf{\mathbf{\Gamma}}-invariant elements of~$\Q$.
        \end{Lemma}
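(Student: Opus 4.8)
The plan is to prove the two inclusions of the claimed equality separately. Write $\Q^{\mathbf{\Gamma}}$ for the set of \pf{\mathbf{\Gamma}}-invariant elements of~$\Q$; since a form $Q'\in\Q$ makes every $\gamma\in\mathbf{\Gamma}$ a \pf{Q'}-orthogonal transformation exactly when $Q'\circ\gamma=Q'$, we have
\begin{equation*}
  \Q^{\mathbf{\Gamma}} = \{Q'\in\Q \mid Q'\circ\gamma=Q'\ \text{for all}\ \gamma\in\mathbf{\Gamma}\},
\end{equation*}
and the hypothesis that $Q$ is \pf{\mathbf{\Gamma}}-invariant says precisely $\mathbf{\Gamma}<\GK_Q$. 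The inclusion $Q\mathbf{H}\subseteq\Q^{\mathbf{\Gamma}}$ is the easy direction: for $h\in\mathbf{H}$ and $\gamma\in\mathbf{\Gamma}$ one has $h\gamma=\gamma h$, so $(Q\circ h)(\gamma x)=Q(\gamma hx)=Q(hx)$ using $Q\circ\gamma=Q$, whence $Q\circ h\in\Q^{\mathbf{\Gamma}}$.

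For the reverse inclusion I would start from $Q'\in\Q^{\mathbf{\Gamma}}$ and use transitivity of $\G$ on $\Q$ (property~(iii)) to write $Q'=Q\circ g$ with $g\in\G$. Unwinding $Q'\circ\gamma=Q'$ and substituting $u=gx$ shows that $Q'$ is \pf{\mathbf{\Gamma}}-invariant if and only if $g\gamma g^{-1}$ is \pf{Q}-orthogonal for every $\gamma\in\mathbf{\Gamma}$. Taking \pf{Q}-adjoints in this orthogonality relation and using $\gamma^*_Q=\gamma^{-1}$ (valid since $\mathbf{\Gamma}<\GK_Q$), a short manipulation turns it into the statement that $g^*_Q g$ commutes with every $\gamma\in\mathbf{\Gamma}$.

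The crux is then to replace $g$ by its \emph{polar factor}. Because $\G$ is closed and stable under $*_Q$ (property~(ii)), it admits the Cartan decomposition relative to $Q$, so I would write $g=kp$ with $k\in\GK_Q$ and $p=\exp(X)$ for a unique $X\in\gp_Q$, giving $g^*_Q g=p^2=\exp(2X)$. Since $\exp(2X)$ commutes with each $\gamma$, and conjugation by $\gamma\in\GK_Q$ preserves $\gp_Q$, we get $\exp(2\gamma X\gamma^{-1})=\exp(2X)$ with both exponents \pf{Q}-selfadjoint; the injectivity of the exponential map on \pf{Q}-selfadjoint endomorphisms then forces $\gamma X\gamma^{-1}=X$. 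Thus $X$, and hence the whole one-parameter group $\exp(tX)$, centralizes $\mathbf{\Gamma}$, so $p=\exp(X)$ lies in the identity component $\mathbf{H}$ of the centralizer. Finally, as $k$ is \pf{Q}-orthogonal we obtain $Q'=Q\circ g=Q\circ(kp)=Q\circ p\in Q\mathbf{H}$, completing the argument.

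The main obstacle is the step that keeps the polar factor inside the group: one must know that the \pf{Q}-positive-definite part $p$ of $g$ again belongs to $\G$ (equivalently $X\in\g$), which is exactly the global Cartan decomposition for closed $*_Q$-stable subgroups of $\GL(V)$ furnished by property~(ii). Combined with the injectivity of $\exp$ on $\gp_Q$, this is what allows one to pass from the commutation of the ``square'' $g^*_Q g$ with $\mathbf{\Gamma}$ to the commutation of $p$ itself, and thereby to place $p$ in the identity component $\mathbf{H}$.
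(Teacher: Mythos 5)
Your proof is correct and is essentially the paper's own argument: your polar factorization $g=kp$ with $p=\exp(X)$, $X\in\gp_Q$, is exactly the content of Proposition~\ref{propAdjoinization} (the paper writes $Q'(x,y)=Q\bigl(x,\exp(H)y\bigr)$ with $H=2X$, so your $p$ is the paper's $h=\exp(H/2)$), and both proofs then conclude identically via injectivity of $\exp$ on $\gp_Q$, using that conjugation by $\gamma\in\GK_Q$ preserves $\gp_Q$, to get that $X$ commutes with $\mathbf{\Gamma}$ and hence $p\in\mathbf{H}$. The only cosmetic difference is attribution: the Cartan decomposition comes from Theorem~\ref{thmStructureSelfdual}, whose hypothesis $g^*_Qg\in\G^0$ is supplied by Lemma~\ref{lemConnection} rather than by property~(ii) alone.
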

        \begin{proof}
        The nontrivial fact to show is that any
        \pf{\mathbf{\Gamma}}-invariant element of~$\Q$ is of the 
        form $Q\circ h$ with $h\in\mathbf{H}$.
        Let $Q'$ be such an element; according to Proposition~\ref{propAdjoinization},
        we have $Q'(x,y)=Q(x,\exp(H)y)$ for some $H\in\gp_Q$.
        Then, for $\gamma\in\mathbf{\Gamma}$, we have
        $Q'(\gamma x,\gamma y) = Q(x,\exp(\gamma^{-1} H\gamma)y)$;
        therefore the condition
        $Q'\circ\gamma=Q'$ is equivalent to
        \begin{equation*}
          \exp(\gamma^{-1}H\gamma) = \exp(H).
        \end{equation*}
        Since $\exp$ is injective on $\gp_Q$, the last condition is equivalent
        to $\gamma^{-1}H\gamma=H$. Then $h=\exp(H/2)$ is an element of $\mathbf{H}$,
        and we have $Q'=Q\circ h$.
        \end{proof}
        
        So, $\quartet{V}{Q\mathbf{H}}{Z}{\mathbf{H}}$ is a Voronoi space,
        and their extreme forms are local maxima
        of the Hermite function restricted
        to the \pf{\mathbf{\Gamma}}-invariant elements of~$\Q$.

        The case where $\quartet{V}{\Q}{Z}{\G}$ is a classic Voronoi space
        has been studied by Berg\'e and Martinet
        in \cite{ExtremeAutomorphism}.
        
      \item Let $Q\mapsto Q^\alpha$ and $g\mapsto g^\alpha$
        as in Example~\ref{exDuality}; let $\sigma\in\G$ such that
        $\sigma Z=Z$.
        An element $Q\in\Q$ such that $Q=Q^\alpha\circ\sigma$, is called 
        \emph{\pf{\sigma}-isodual}. Note that, if $Q\in\Q$ is \pf{\sigma}-isodual,
        we have $\gamma(Q)=\gamma(Q_\alpha)$.

        Let $Q\in\Q$ be a \pf{\sigma}-isodual quadratic form,
        Let $\mathbf{H}$ be the connected component of the identity
        of set of the elements $g\in\G$
        satisfying $\sigma g=g^\alpha\sigma$.
        \begin{Lemma}\label{lemExampleIsodual}
        $Q\mathbf{H}$ is the set of \pf{\sigma}-isodual elements of~$\Q$.
        \end{Lemma}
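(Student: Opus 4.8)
The plan is to follow the proof of Lemma~\ref{lemExampleInvariant} and establish the two inclusions separately. The easy inclusion $Q\mathbf{H}\subseteq\{\text{$\sigma$-isodual elements}\}$ is a direct manipulation of the composition law. Any $h\in\mathbf{H}$ satisfies $\sigma h=h^\alpha\sigma$ by the very definition of $\mathbf{H}$, so, using $(Q\circ g)^\alpha=Q^\alpha\circ g^\alpha$ together with the isoduality $Q^\alpha\circ\sigma=Q$ of the base point, I would compute
\begin{equation*}
  (Q\circ h)^\alpha\circ\sigma
  = Q^\alpha\circ(h^\alpha\sigma)
  = Q^\alpha\circ(\sigma h)
  = (Q^\alpha\circ\sigma)\circ h
  = Q\circ h ,
\end{equation*}
which shows that $Q\circ h$ is $\sigma$-isodual.

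For the reverse inclusion, let $Q'$ be a $\sigma$-isodual element of~$\Q$. By Proposition~\ref{propAdjoinization} I write $Q'(x,y)=Q(x,\exp(H)y)$ with $H\in\gp_Q$ and set $h=\exp(H/2)$, so that $h$ is positive $Q$-selfadjoint and $Q'=Q\circ h$. A similar (but unconditional) computation rewrites the isoduality $Q'=(Q')^\alpha\circ\sigma$ as
\begin{equation*}
  Q\circ h = Q\circ k ,\qquad k=\sigma^{-1}h^\alpha\sigma .
\end{equation*}
My aim is then to deduce $h=k$, which is exactly the membership condition $\sigma h=h^\alpha\sigma$, and afterwards that $h$ lies in the connected component $\mathbf{H}$.

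The crux---and the step I expect to be the main obstacle---is to understand how $\alpha$ interacts with the $Q$-adjoint and the splitting $\g=\gk_Q\oplus\gp_Q$, so as to see that $k$ is again positive $Q$-selfadjoint. I would differentiate the compatibility $(g^*_Q)^\alpha=(g^\alpha)^*_{Q^\alpha}$ along $g=\exp(tH)$---legitimate since $\alpha$ is a smooth automorphism, whence $\alpha\circ\exp=\exp\circ d\alpha$ for the induced Lie-algebra automorphism $d\alpha$---to obtain $d\alpha(H^*_Q)=(d\alpha H)^*_{Q^\alpha}$, and in particular $d\alpha(\gp_Q)\subseteq\gp_{Q^\alpha}$. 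Feeding in Lemma~\ref{lemConjugate} for $Q^\alpha=Q\circ\sigma^{-1}$, namely $\gp_{Q^\alpha}=\sigma\,\gp_Q\,\sigma^{-1}$, I find $\sigma^{-1}d\alpha(H/2)\sigma\in\gp_Q$, so that $k=\exp\bigl(\sigma^{-1}d\alpha(H/2)\sigma\bigr)$ is indeed positive $Q$-selfadjoint. Then $Q\circ h=Q\circ k$ forces $h^2=h^*_Q h=k^*_Q k=k^2$, and uniqueness of the positive square root gives $h=k$, i.e.\ $\sigma h=h^\alpha\sigma$.

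Finally I must check that $h$ belongs to $\mathbf{H}$ and not merely to $\{g\in\G\mid\sigma g=g^\alpha\sigma\}$. By injectivity of $\exp$ on $\gp_Q$, the identity $h=k$ is equivalent to $\sigma^{-1}d\alpha(H)\sigma=H$; as this relation is homogeneous in~$H$, each $\exp(tH/2)$ with $t\in[0,1]$ also satisfies $\sigma\exp(tH/2)=\exp(tH/2)^\alpha\sigma$. This exhibits a path inside $\{g\in\G\mid\sigma g=g^\alpha\sigma\}$ joining $\id$ to $h$, so $h\in\mathbf{H}$ and $Q'=Q\circ h\in Q\mathbf{H}$, which finishes the proof.
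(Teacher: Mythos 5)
Your proof is correct and follows essentially the same route as the paper: write $Q'=Q_H$ via Proposition~\ref{propAdjoinization}, use the derivative of $\alpha$ and the compatibility $d\alpha(H^*_Q)=(d\alpha H)^*_{Q^\alpha}$, reduce everything to showing $\sigma^{-1}d\alpha(H)\sigma\in\gp_Q$, and conclude by injectivity of $\exp$ on $\gp_Q$; your only real deviation is obtaining that membership by citing Lemma~\ref{lemConjugate} for $\gp_{Q^\alpha}=\sigma\,\gp_Q\,\sigma^{-1}$, where the paper redoes the equivalent adjoint computation $(H^A)^*_Q=(\sigma\sigma^*_Q)^{-1}(H^*_Q)^A(\sigma\sigma^*_Q)$ by hand. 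You also correctly supply two details the paper leaves implicit, namely the easy inclusion $Q\mathbf{H}\subseteq\{\text{$\sigma$-isodual elements}\}$ and the path argument showing that $h=\exp(H/2)$ lies in the identity component $\mathbf{H}$ rather than merely in $\{g\in\G\mid\sigma g=g^\alpha\sigma\}$.
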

        \begin{proof}
        The general line of the proof is the same as the one of Lemma~\ref{lemExampleInvariant}:
        The nontrivial fact to show is that any \pf{\sigma}-isodual
        element of $\Q$ is of the form $Q\circ h$ with $h\in \mathbf{H}$.       
        Let $Q'$ be such an element; according to Proposition~\ref{propAdjoinization},
        we have $Q'(x,y)=Q(x,\exp(H)y)$ for some $H\in\gp_Q$.
        Let $H\mapsto H^A$ be the derivative of $g\mapsto g^\alpha$; we have
        $\exp(H^A)=\exp(H)^\alpha$ and $(H^*_Q)^A= (H^A)^*_{Q^\alpha}$.
        The property $(Q\circ h)^\alpha= Q^\alpha\circ h^\alpha$ applied
        to $h=\exp(H/2)$ gives
        $Q'^\alpha(x,y) = Q^\alpha\bigl(x,\exp(H^A)y\bigr)$.
        Therefore, we have
        \begin{multline*}
          Q\bigl(x,\exp(H) y\bigr)
          = Q'(x, y) 
          = Q'^{\alpha}(\sigma x, \sigma y) \\
          = Q^\alpha\bigl(\sigma x,\exp(H^A)\sigma y\bigr) 
          = Q^\alpha\bigl(\sigma x,\sigma \exp(\sigma^{-1}H^A\sigma) y\bigr) \\
          = Q\bigl(x,\exp(\sigma^{-1}H^A\sigma) y\bigr).
        \end{multline*}
        We infer the following equality:
        \begin{equation*}
          \exp(H) = \exp(\sigma^{-1}H^A\sigma).
        \end{equation*}
        If we have $\sigma^{-1}H^A\sigma\in\gp_Q$, the injectivity of $\exp$
        on $\gp_Q$ implies $H=\sigma^{-1}H^A\sigma$, and we have
        $Q'=Q\circ h$ with $h=\exp(H/2)\in\mathbf{H}$.
        Therefore, it suffices to show the equality
        \begin{equation*}
          (\sigma^{-1}H^A\sigma)^*_Q = \sigma^{-1}H^A\sigma.
        \end{equation*}       
        Since $Q^\alpha=Q\circ\sigma^{-1}$, we have,
        for any linear endomorphism $A$ of $V$, 
        \begin{equation*}
          A^*_{Q^\alpha}
          = \bigl((\sigma^{-1})^*_Q\sigma^{-1}\bigr)^{-1} A^*_Q \bigl((\sigma^{-1})^*_Q\sigma^{-1}\bigr)
          = (\sigma\sigma^*_Q) A^*_Q (\sigma\sigma^*_Q)^{-1}.
        \end{equation*}
        In particular, we get
        \begin{equation*}
          (H^A)^*_{Q} = (\sigma\sigma^*_Q)^{-1} (H^A)^*_{Q^\alpha} (\sigma\sigma^*_Q)
          =(\sigma\sigma^*_Q)^{-1} (H^*_Q)^A (\sigma\sigma^*_Q).
        \end{equation*}
        Therefore, we have
        \begin{equation*}
          (\sigma^{-1}H^A\sigma)^*_Q
          = \sigma^*_Q\,(\sigma\sigma^*_Q)^{-1} (H^*_Q)^A (\sigma\sigma^*_Q)\,(\sigma^{-1})^*_Q
          = \sigma^{-1}H^A\sigma,
        \end{equation*}
        which is what remained to prove.
        \end{proof}       
        \begin{Lemma}\label{lemExampleIsodualBis}
        For any $g\in\mathbf{H}$, and any $Q'\in Q\mathbf{H}$, we have $g^*_{Q'}\in\mathbf{H}$.
        \end{Lemma}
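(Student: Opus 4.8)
The plan is to establish the claim first for $Q'=Q$ and then reduce the general case to it. Write $\mathbf{H}'=\{g\in\G \mid \sigma g=g^\alpha\sigma\}$, so that $\mathbf{H}$ is the identity component of $\mathbf{H}'$. Because $\alpha$ is an automorphism of $\G$, the set $\mathbf{H}'$ is in fact a subgroup: for $g_1,g_2\in\mathbf{H}'$ one has $\sigma g_1g_2=g_1^\alpha\sigma g_2=g_1^\alpha g_2^\alpha\sigma=(g_1g_2)^\alpha\sigma$, and the identity and the inverse are treated in the same way.

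The central step is to show that $g\in\mathbf{H}'$ implies $g^*_Q\in\mathbf{H}'$, that is $(g^*_Q)^\alpha=\sigma g^*_Q\sigma^{-1}$. Starting from the hypothesis $(g^*_Q)^\alpha=(g^\alpha)^*_{Q^\alpha}$ and substituting $g^\alpha=\sigma g\sigma^{-1}$, I would apply the identity $A^*_{Q^\alpha}=(\sigma\sigma^*_Q)A^*_Q(\sigma\sigma^*_Q)^{-1}$, valid since $Q^\alpha=Q\circ\sigma^{-1}$ exactly as in the proof of Lemma~\ref{lemExampleIsodual}, to $A=\sigma g\sigma^{-1}$. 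Writing $(\sigma g\sigma^{-1})^*_Q=(\sigma^*_Q)^{-1}g^*_Q\sigma^*_Q$ and cancelling the outer factors $\sigma\sigma^*_Q$ against $(\sigma^*_Q)^{\pm1}$ collapses the expression to $\sigma g^*_Q\sigma^{-1}$, which is what is wanted.

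To pass from $\mathbf{H}'$ to its identity component $\mathbf{H}$, note that the map $g\mapsto g^*_Q$ is continuous on $\G$ and, by the previous step, carries $\mathbf{H}'$ into $\mathbf{H}'$; since $\mathbf{H}$ is connected and the identity is fixed, the image of $\mathbf{H}$ is a connected subset of $\mathbf{H}'$ containing $\id$, hence is contained in $\mathbf{H}$. This settles the case $Q'=Q$. For a general $Q'=Q\circ h$ with $h\in\mathbf{H}$, I would invoke the conjugation formula $g^*_{Q'}=(h^*_Q h)^{-1}g^*_Q(h^*_Q h)$ from the proof of Lemma~\ref{lemConjugate}; for $g\in\mathbf{H}$ all of $g^*_Q$, $h^*_Q$, $h$ lie in $\mathbf{H}$, so the right-hand side lies in the group $\mathbf{H}$. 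The only delicate point is precisely this last passage from $\mathbf{H}'$ to $\mathbf{H}$: the algebra only shows that the $Q$-adjoint preserves the defining relation of $\mathbf{H}'$, and one genuinely needs the connectedness argument to keep the adjoint inside the identity component.
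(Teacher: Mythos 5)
Your proof is correct and follows essentially the same route as the paper: the same reduction to the case $Q'=Q$ via the conjugation formula $g^*_{Q'}=(h^*_Qh)^{-1}g^*_Q(h^*_Qh)$, followed by the same algebraic verification of the relation $\sigma g^*_Q=(g^*_Q)^\alpha\sigma$ using the compatibility $(g^*_Q)^\alpha=(g^\alpha)^*_{Q^\alpha}$ together with the transfer identity $A^*_{Q^\alpha}=(\sigma\sigma^*_Q)A^*_Q(\sigma\sigma^*_Q)^{-1}$. Your explicit final step---that $g\mapsto g^*_Q$ is continuous, fixes $\id_V$, and preserves the full group $\mathbf{H}'=\{g\in\G\mid\sigma g=g^\alpha\sigma\}$, hence carries the identity component $\mathbf{H}$ into itself---is a point the paper leaves implicit, since its algebra alone only places $h^*_Q$ in $\mathbf{H}'$ rather than in the identity component $\mathbf{H}$.
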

        \begin{proof}
        Let $h\in\mathbf{H}$, and let $Q'=Q\circ j$ with $j\in\mathbf{H}$. since we have
        $g^*_{Q'} = (j^*_Qj)^{-1} h^*_{Q} (j^*_Qj)$, it suffices to show the lemma for $Q'=Q$.
        By the same argument given in the previous proof for the calculation of $(H^A)^*_Q$, we have
        \begin{equation*}
          (h^\alpha)^*_Q = (\sigma\sigma^*_Q)^{-1} (h^*_Q)^\alpha (\sigma\sigma^*_Q),
        \end{equation*}
        and therefore
        \begin{equation*}
          \sigma h^*_Q
          = (\sigma^{-1}\sigma h \sigma^*_Q)^*_Q
          = (\sigma^{-1} h^\alpha\sigma \sigma^*_Q)^*_Q
          = (\sigma\sigma^*_Q) (h^\alpha)^*_Q (\sigma\sigma^*_Q)^{-1}\sigma
          = (h^*_Q)^\alpha\sigma,
        \end{equation*}
        therefore, we have $h^*_Q\in\mathbf{H}$.
        \end{proof}
        So,
        $\quartet{V}{Q\mathbf{H}}{Z}{\mathbf{H}}$ is a Voronoi space,
        and their extreme forms are local maxima
        of the Hermite function restricted
        to the \pf{\sigma}-isodual elements of~$\Q$.
        
        The dual case of this situation
        (see Paragraph~\ref{parExamplesBis}, Example~\ref{exOrbitAQV})
        with the classic Voronoi space,
        has been studied by Berg\'e and Martinet
        in \cite{LatticeFamily}.
        The case where $\sigma^2=-\id_V$
        is also interesting, see \cite{BuserSarnak}.
    \end{enumerate}   
  
  \item\label{exExterior} \textit{Exterior powers.}
    Let $\quartet{V}{\Q}{Z}{\G}$ be a Voronoi space and let $1\le m\le\dim(V)/2$.
    We define a space $\quartet{V^{\wedge m}}{\Q^{\wedge m}}{Z^{\wedge m}}{\G^{\wedge m}}$
    as follows: $V^{\wedge m}$ is the $m$th exterior power of $V$,
    and
    \begin{gather*}
      \Q^{\wedge m} = \{Q^{\wedge m}\mid Q\in\Q\},\
      \text{where}\ Q^{\wedge m}(x_1\wedge\dots\wedge x_m)
            = \det\bigl(Q(x_i,x_j)_{i,j=1}^{m}\bigr), \\
      Z^{\wedge m} = \{x_1\wedge\dots\wedge x_m\mid
        \text{$x_1,\dots,x_m$ are linearly independent elements of $Z$}\},\\        
      \G^{\wedge m} = \{\tilde{g} \mid g\in\G\},\
        \text{where $\tilde{g}(x_1\wedge\dots\wedge x_m) = gx_1\wedge\dots\wedge gx_m$.}
    \end{gather*}
    The elements of its Lie algebra $\g^{\wedge m}$ are of the form
    \begin{multline*}
      \tilde{H}(x_1\wedge\dots\wedge x_m) = 
      (Hx_1\wedge x_2\wedge\dots\wedge x_m) \\
      + (x_1\wedge Hx_2\wedge x_3\wedge\dots\wedge x_m)
      +\dots
      + (x_1\wedge\dots\wedge x_{m-1}\wedge Hx_m), \\
      \text{with  $H\in\g$}.
    \end{multline*}
    
    If $\quartet{V}{\Q}{Z}{\G}$ is the classic Voronoi space of rank~$n$,
    then $\quartet{V^{\wedge m}}{\Q^{\wedge m}}{Z^{\wedge m}}{\G^{\wedge m}}$ is a
    compact Voronoi space. The maximal value of the corresponding
    Hermite function is the maximal value of
    \begin{equation*}
      \det\Bigl(\bigl(\scal{x_i}{x_j}\bigr)_{i,j=1}^{m}\Bigr)
    \end{equation*}
    among all sets of $m$ linear independent \pf{n}-dimensional
    vectors $x_1,\dots,x_m$ with integral entries.
    It is known as the \emph{Rankin constant}~$\gamma_{n,m}$,
    and it has been first considered by Rankin in \cite{Rankin}.

  \item\label{exHumbert} \textit{Humbert forms.}
    Let $K$ be a number field of signature $(r,s)$
    of integral ring $\mathcal{O}_K$,
    let $\sigma_1,\dots,\sigma_r$ be the real embeddings of $K$,
    and $\sigma_{r+1},\dots,\sigma_{r+2s}$ be its complex embeddings,
    with $\sigma_{r+s+i}=\overline{\sigma_{r+i}}$,
    and let $n$ be a positive integer.
    In what follows, ``$\otimes$'' denotes always the tensor product as \emph{real} vector space.
    Let $V$ be the linear subspace of $(\RR^n)^{\otimes r}\otimes(\CC^n)^{\otimes 2s}$
    generated by $x_1\otimes\dots\otimes x_{r+2s}$, where
    \begin{gather*}
      x_i\in\RR^n \quad \text{for $1\le i\le r$},\\
      x_{r+i}\in\CC^n \ \text{and}\ x_{r+s+i}=\overline{x_{r+i}}\quad \text{for $1\le i\le s$}.
    \end{gather*}
    Let $Z$ be the subset of $V$ given by
    \begin{equation*}
      Z=\bigl\{\sigma_1(x)\otimes\dots\otimes\sigma_{r+s}(x)\bigm| x\in\mathcal{O}_K^n \bigr\}.
    \end{equation*}
    Let $\Q$ be the space of quadratic forms $Q$ on $V$ satisfying
    \begin{equation*}
      Q(x_1\otimes\dots\otimes x_{r+2s})=\prod_{j=1}^{r+2s}\scal{x_j}{A_jx_j},
    \end{equation*}
    where $A_i$ is a positive definite symmetric operator on $\RR^n$
    for $1\le i\le r$,
    and $A_{r+i}$ is a positive definite hermitian operator on $\CC^n$
    for $1\le i\le s$,
    and $A_{r+s+i}=\overline{A_{r+i}}$ for $1\le i\le s$,
    and
    \begin{equation*}
      \prod_{j=1}^{r+2s}\det A_j = 1.
    \end{equation*}
    Finally, let $\G$ be the linear group acting on $V$ and whose elements $g$ satisfy
    \begin{equation*}
      g(x_1\otimes\dots\otimes x_{r+2s})
       = g_1x_1\otimes\dots\otimes g_{r+2s}x_{r+2s},
    \end{equation*}
    for some $g_i\in\GL(n,\RR)$ for $1\le i\le r$,
    and some $g_{r+i}\in\GL(n,\CC)$ and $g_{r+s+i}=\overline{g_{r+i}}$ for $1\le i\le r$,
    such that
    \begin{equation*}
      \prod_{j=1}^{r+2s}\det(g_j)=1.
    \end{equation*}
    The elements $H$ of its Lie algebra $\g$ are of the form
    \begin{multline*}
      H(x_1\otimes\dots\otimes x_m) = 
      (H_1x_1\otimes x_2\otimes\dots\otimes x_m) \\
      + (x_1\otimes H_2x_2\otimes x_3\otimes\dots\otimes x_m)
      +\dots
      + (x_1\otimes\dots\otimes x_{r+2s-1}\otimes H_{r+2s}x_{r+2s}),
    \end{multline*}
    where $H_i\in\glin(n,\RR)$ for $1\le i\le r$,
    and $H_{r+i}\in\glin(n,\CC)$ and $H_{r+s+i}=\overline{H_{r+i}}$ for $1\le i\le r$,
    and
    \begin{equation*}
      \sum_{j=1}^{r+2s}\tr(H_j)=0.
    \end{equation*}
    Then $\quartet{V}{\Q}{Z}{\G}$ is a Voronoi space.
    The maximal values of the associated Hermite function
    has been in particular studied by R.~Baeza and M.~I. Icaza
    in \cite{Icaza} and \cite{BaezaIcaza}.
    
\end{enumerate}

\section{Structure of the group of Voronoi space}\label{sectGroupVoronoiSpace}

The theoretical results of our paper rely
strongly on the structure of the group of Voronoi space,
which is a selfdual real Lie group with some property of connectedness.
The main features we will extract from this structure are
the convexity property of Lemma~\ref{lemEstimationNeighborhood},
the reducibility criterion of Proposition~\ref{propWeaklyPerfectOnly},
the structure allowing a natural definition of design (Definition~\ref{defDesign}),
and the connection expressed in Theorem~\ref{thmVenkovCriterion}
between designs and notions related to the extremality problem.

In this section, we give details concerning the important aspects of the structure
of the group of Voronoi space,
relying on well-known structure results of the theory of real Lie groups.
The statements of this section
may also be readily checked for the particular examples
of Paragraph~\ref{parExamples},
but we give here the proofs for the general case.

\subsection{Structure of selfdual linear Lie groups}

Let $E$ be a Euclidean space;
we denote by $\GP(E)$ the cone of
positive definite selfadjoint linear transformations of $E$, and 
by $\gp(E)$ the vector space of selfadjoint linear operators of $E$.
Note that $\exp:\gp(E)\to\GP(E)$ is a diffeomorphism.
We have a scalar product on $\End(E)$, the associative algebra
of linear endomorphisms
of~$V$, given by
\begin{equation*}
  \scal{X}{Y} = \tr(X^*Y).
\end{equation*}
For this scalar product, the selfadjoint linear operators on~$E$
are orthogonal to the antiselfadjoint linear operators, and we have
\begin{equation*}
  \scal{X}{[Y,Z]} = \scal{[Y^*,X]}{Z},
  \quad\forall X,Y,Z\in\End(E)
\end{equation*}

\begin{Theorem}\label{thmStructureSelfdual}
  Let $E$ be a Euclidean space,
  and let $\G\subseteq\GL(E)$ be a linear Lie group such that
  \begin{equation*}
    g\in\G\quad\Rightarrow\quad
    g^*\in\G\ \ \text{and}\ \ g^*g\in\G^0.
  \end{equation*}
  Let moreover
  \begin{itemize}
    \item $\GK=\G\cap\Orth(E)$ be the subgroup of
      orthogonal elements of $\G$;
    \item $\GP=\G\cap\GP(E)$ be the set of positive definite
      selfadjoint elements of $\G$;
    \item $\g\subseteq\glin(E)$ be the Lie algebra of $\G$;
    \item $\gk=\g\cap\so(E)$ be the subalgebra of
      antiselfadjoint elements of~$\g$, or the Lie algebra of $K$;
    \item $\gp=\g\cap\gp(E)$ be the linear subspace of
      selfadjoint elements of~$\g$, or the tangent space of $\GP$.
  \end{itemize}
  Then
  \begin{enumerate}[(i)]
    \item we have $\g=[\g,\g]\oplus\gz(\g)$, and $[\g,\g]$ is semisimple;
    \item $\GP=\exp(\gp)$;
    \item $\g=\gk\oplus\gp$ and $\G=\GK\GP$; moreover, for any $g\in\G$,
      there exist unique $k\in \GK$ and $p\in\GP$ such that $g=kp$.
    \item there exists a triangular connected Lie subgroup
      $\GD\subseteq\G$
      (i.e., a connected Lie subgroup whose elements are expressed
      by upper triangular matrices with respect to some basis of $E$)
      such that $\G=\GK\GD$.
  \end{enumerate}
\end{Theorem}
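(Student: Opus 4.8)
The plan is to recognize $\G$ as a self-adjoint, hence reductive, linear Lie group and deduce (i)--(iv) from the standard Cartan and Iwasawa decompositions, the only genuinely new ingredient being the reduction to the identity component supplied by the hypothesis $g^*g\in\G^0$. First I record the infinitesimal self-adjointness: the smooth involution $g\mapsto g^*$ of $\G$ fixes the identity, so its differential $X\mapsto X^*$ preserves $\g$; thus $\g^*=\g$, and $\g$ splits into the $+1$- and $-1$-eigenspaces of the involution $X\mapsto-X^*$, which are $\gk$ and $\gp$, giving the vector-space part of~(iii). For~(i) I use the positive definite Hilbert--Schmidt form $\scal{X}{Y}=\tr(X^*Y)$ on $\g$. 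The identity $\scal{X}{[Y,Z]}=\scal{[Y^*,X]}{Z}$ says $\ad(Y)$ has HS-adjoint $\ad(Y^*)$; combined with $\g^*=\g$ this shows that the HS-orthogonal complement in $\g$ of any ideal is again an ideal (if $\mathfrak a$ is an ideal, $C\in\mathfrak a^\perp$ and $Y\in\g$, then for $W\in\mathfrak a$ one has $\scal{[Y,C]}{W}=\scal{C}{[Y^*,W]}=0$, since $[Y^*,W]\in\mathfrak a$). Hence the adjoint representation of $\g$ on itself is completely reducible, so $\g$ is reductive and $\g=\gz(\g)\oplus[\g,\g]$ with $[\g,\g]$ semisimple.

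The heart of (ii) and (iii) is the assertion $\GP=\exp(\gp)$. The inclusion $\exp(\gp)\subseteq\GP$ is immediate. For the converse, the key classical input is the connected Cartan decomposition of the connected self-adjoint group $\G^0$, which yields $\G^0\cap\GP(E)=\exp(\gp)$; this rests on the completeness and nonpositive curvature (the convexity) of the symmetric space $\GP(E)$, and I would quote it from the standard theory of reductive Lie groups. Granting it, let $q\in\GP=\G\cap\GP(E)$. The hypothesis applied to $g=q$ gives $q^2=q^*q\in\G^0$, so $q^2\in\G^0\cap\GP(E)=\exp(\gp)$, say $q^2=\exp(Y)$ with $Y\in\gp$. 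As $q$ and $\exp(Y/2)$ are both positive definite self-adjoint square roots of $q^2$, uniqueness of such roots forces $q=\exp(Y/2)\in\exp(\gp)$. Thus $\GP=\exp(\gp)$, which is~(ii).

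Part~(iii) then follows from the polar decomposition in $\GL(E)$: given $g\in\G$, write $g=ks$ with $k\in\Orth(E)$ and $s=(g^*g)^{1/2}\in\GP(E)$. Since $g^*g\in\G\cap\GP(E)=\GP=\exp(\gp)$ by~(ii), its square root $s$ lies in $\exp(\gp)=\GP$, whence $k=gs^{-1}\in\G\cap\Orth(E)=\GK$; uniqueness is inherited from the $\GL(E)$ polar decomposition. For~(iv) I would invoke the Iwasawa decomposition of the reductive $\g$: choose a maximal abelian $\mathfrak a\subseteq\gp$ and, after fixing positive restricted roots, set $\gd=\mathfrak a\oplus\mathfrak n$ with $\mathfrak n$ the sum of the positive root spaces. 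In an orthonormal basis diagonalizing $\mathfrak a$ and ordering the eigenvalues, $\mathfrak a$ is diagonal and $\mathfrak n$ is strictly upper triangular, so $\GD=\exp(\gd)$ is a connected triangular subgroup with $\G^0=\GK^0\GD$. Finally $\GP=\exp(\gp)\subseteq\G^0=\GK^0\GD$, so by~(iii) we get $\G=\GK\GP\subseteq\GK\GD$, giving $\G=\GK\GD$.

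The main obstacle is the passage from the infinitesimal splitting $\g=\gk\oplus\gp$ to the global statement $\GP=\exp(\gp)$. Without a connectedness hypothesis this can fail: the cyclic group generated by a single positive definite $q\neq\id$ is self-adjoint yet has $\gp=0$ while $\GP$ is infinite. The role of $g^*g\in\G^0$ is exactly to force the relevant positive definite elements into $\G^0$, where the connected Cartan decomposition applies; establishing that connected decomposition via the convexity of $\GP(E)$ is the deepest point, and it is the same convexity that the paper later exploits in Lemma~\ref{lemEstimationNeighborhood}.
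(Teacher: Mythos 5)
Your proof is correct, and its skeleton coincides with the paper's: split $\g$ orthogonally into $[\g,\g]\oplus\gz(\g)$ via the Hilbert--Schmidt form and the adjointness relation $\ad(Y)^{*}=\ad(Y^{*})$, reduce the decomposition of all of $\G$ to the identity component by means of the hypothesis $g^{*}g\in\G^{0}$ together with uniqueness of polar decompositions and of positive square roots, and obtain (iv) from Iwasawa theory. The genuine difference is which classical input is taken off the shelf. The paper quotes only the \emph{semisimple} Cartan and Iwasawa decompositions (Theorem~\ref{thmSemisimpleGroup}, from Onishchik--Vinberg) and extends them by hand: Lemma~\ref{lemDecomposition} checks that $[\g,\g]=([\g,\g]\cap\gk)\oplus([\g,\g]\cap\gp)$ is a Cartan decomposition and that the splitting is $*$-stable and orthogonal, so that $\G^{0}=\GK^{0}\exp(\gp)$ and the triangular group $\GD=\GD'\,\bigl(\GZ(\G)^{0}\cap\GP\bigr)$ can be assembled from the semisimple factor and the central positive part. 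You instead black-box the Mostow-type statement $\G^{0}\cap\GP(E)=\exp(\gp)$ for a closed connected self-adjoint linear group (legitimate, since $\G^{0}$ is closed and $*$-stable and such groups fall under the standard reductive theory), and for (iv) you run the restricted-root construction on the full reductive algebra; note that your maximal abelian $\mathfrak{a}\subseteq\gp$ automatically contains $\gz(\g)\cap\gp$, so the central part is absorbed without the paper's separate factor. Two local variants are worth recording: your derivation of $\GP=\exp(\gp)$ applies the hypothesis to $q$ itself ($q^{2}=q^{*}q\in\G^{0}$) and concludes by uniqueness of positive square roots, whereas the paper first decomposes an arbitrary $g$ using Lemma~\ref{lemUniquenessDecomposition} and gets $\GP=\exp(\gp)$ as a by-product; and your cyclic-group example showing that the conclusion fails without the hypothesis $g^{*}g\in\G^{0}$ makes explicit a point the paper leaves implicit. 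What the paper's route buys is self-containedness modulo a single semisimple citation; what yours buys is brevity, at the price of a stronger citation.
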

  
  If $\G$ were a connected semisimple group, Claims (iii) and (iv) would be
  respectively the
  classic ``Cartan decomposition'' and ``Iwasawa decomposition'' of $\G$,
  that we recall here:
  
\begin{Theorem}\label{thmSemisimpleGroup}
  Let $\G$ be a connected semisimple group, and let $\g$ be its Lie algebra.
  Let $\g=\gk\oplus\gp$ be a Cartan decomposition of $\g$, that is
  a direct sum of vector subspaces such that the 
  map $\theta:X+Y\mapsto X-Y$ ($X\in\gk$, $Y\in\gp$)
  is an automorphism of~$\g$, and the bilinear form
  \begin{equation*}
    b_{\theta}(X,Y)= -\tr_{\g}\bigl(\ad(\theta X)\,\ad(Y)\bigr),
    \quad X,Y\in\g,
  \end{equation*}
  is positive definite. Then:
  \begin{enumerate}[(i)]
    \item\label{clmCartanSemisimpleGroup} \textit{(Cartan decomposition.)}
      Let $\GK=\exp(\gk)$. We have a diffeomorphism
      \begin{equation*}
        \varphi:\GK\times\gp\to\G, \quad \varphi(k,H) = k\exp(H).
      \end{equation*}
    \item\label{clmIwasawaSemisimpleGroup}
      \textit{(Iwasawa decomposition.)} There exists a connected triangular
      Lie subgroup $\GD\subseteq\G$
      such that we have a diffeomorphism
      \begin{equation*}
        \mu:\GK\times\GD\to\G, \quad \mu(k,d) = kd.
      \end{equation*}
  \end{enumerate}
\end{Theorem}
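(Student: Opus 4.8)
The statement is the classical pair of Cartan and Iwasawa decompositions, and the plan is to derive both from the given Cartan involution $\theta$ by transporting the problem into the adjoint group. Equip $\g$ with the inner product $b_\theta$; because $\theta$ is an automorphism and the Killing form $B$ is $\ad$-invariant, one checks directly that $\ad X$ is $b_\theta$-antisymmetric for $X\in\gk$ and $b_\theta$-symmetric for $X\in\gp$. I work through the adjoint representation $\mathrm{Ad}\colon\G\to\GL(\g)$, whose image is $\mathrm{Int}(\g)=\mathrm{Aut}(\g)^0$ and whose kernel is the discrete center of $\G$ (discrete since $\gz(\g)=0$ for semisimple $\g$).

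For (i), the first step is to show that $\mathrm{Int}(\g)$ is self-adjoint in $\GL(\g)$ for $b_\theta$. An automorphism $\phi$ preserves $B$, so its $B$-adjoint is $\phi^{-1}$, and since $\theta$ is a $B$-symmetric involution a short computation gives that the $b_\theta$-transpose is $\phi^{*}=\theta\phi^{-1}\theta$; as $\mathrm{Int}(\g)$ is normal in $\mathrm{Aut}(\g)$, this transpose again lies in $\mathrm{Int}(\g)$. The classical polar decomposition of $\GL(\g)$ then applies to the self-adjoint closed subgroup $\mathrm{Int}(\g)$: every element factors uniquely as an orthogonal transformation times $\exp$ of a symmetric one, and self-adjointness forces the two factors to lie in $\mathrm{Int}(\g)$, with the symmetric part in $\ad(\gp)$ and the orthogonal factor in $\mathrm{Int}(\g)\cap\Orth(\g)$, a group with Lie algebra $\ad(\gk)$. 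Pulling this back through $\mathrm{Ad}$---using that $\ad$ is injective on $\gp$ and that the center is contained in $\GK=\exp(\gk)$---shows $\varphi(k,H)=k\exp(H)$ is a bijection $\GK\times\gp\to\G$; smoothness of its inverse follows from the inverse function theorem, since $\gk\cap\gp=0$ makes $d\varphi$ invertible everywhere.

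For the Iwasawa decomposition (ii), I choose a maximal abelian subspace $\mathfrak{a}\subseteq\gp$ and decompose $\g$ into restricted root spaces under the commuting family of $b_\theta$-symmetric operators $\ad H$, $H\in\mathfrak{a}$. Fixing a positive system and putting $\mathfrak{n}=\bigoplus_{\lambda>0}\g_\lambda$, the subalgebra $\mathfrak{n}$ is nilpotent, $\mathfrak{a}$ normalizes it, and one obtains the vector-space decomposition $\g=\gk\oplus\mathfrak{a}\oplus\mathfrak{n}$. The connected subgroup $\GD$ with Lie algebra $\mathfrak{a}\oplus\mathfrak{n}$ is solvable, and writing the operators $\ad H$ simultaneously in a weight basis ordered by decreasing eigenvalue makes every element of $\GD$ upper triangular, so $\GD$ is triangular in the required sense. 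As before, $d\mu$ is invertible at the identity because the sum $\g=\gk\oplus(\mathfrak{a}\oplus\mathfrak{n})$ is direct, so the only substantial point left is the global bijectivity of $\mu(k,d)=kd$.

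I expect this global bijectivity---equivalently $\G=\GK\GD$ together with uniqueness---to be the main obstacle. The clean route is to let $\mathrm{Int}(\g)$ act on the symmetric space $\G/\GK$, identified via $\varphi$ with $\exp(\gp)$, and to prove that the solvable group $\GD$ acts simply transitively there: surjectivity comes from a completeness or fixed-point argument for the induced action, and injectivity from the triangular structure of $\GD$. Combining this with the $\GK$-fibration upgrades simple transitivity on $\G/\GK$ to the asserted diffeomorphism $\GK\times\GD\to\G$. Since all of these are standard structural facts for connected semisimple Lie groups, in the final text I would present the theorem as a recollection, giving a reference for the details rather than reproducing the full argument.
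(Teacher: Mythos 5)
Your proposal agrees with the paper in the essential respect: the paper gives no proof of this theorem at all, recalling it as a classical structure result and citing \cite[Chap.~4, Theorem~3.2, p.~145, and Theorem~4.6, p.~159]{OV}, which is exactly the presentation you say you would adopt in the final text. Your accompanying sketch (self-adjointness of $\mathrm{Int}(\g)$ with respect to $b_\theta$, polar decomposition pulled back through $\mathrm{Ad}$, restricted root spaces and simple transitivity of $\GD$ on $\G/\GK$) is the standard argument found in such references, so modulo routine details deferred there---e.g., the global invertibility of $d\varphi$ needs the spectral argument for $\ad H$ on $\gp$, not just $\gk\cap\gp=0$---there is nothing further to compare.
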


See, e.g., \cite[Chap.~4, Theorem~3.2, p.145, and Theorem~4.6, p.~159]{OV}.
Our proof of Theorem~\ref{thmStructureSelfdual} relies on that result;
but since our group~$\G$ is not semisimple,
we need some work before we can apply it.

\begin{Lemma}\label{lemDecomposition}
  Let $\g\subseteq\glin(E)$ be
  a linear Lie algebra such that $X^*\in\g$ whenever $X\in\g$.
  Then we have the decomposition
  \begin{equation*}
    \g = [\g,\g] \oplus \gz(\g),
  \end{equation*}
  the algebra $[\g,\g]$ is semisimple,
  the spaces $[\g,\g]$ and $\gz(\g)$ are stable under the adjunction
  $X\mapsto X^*$, and they are orthogonal one to the other.
  Moreover, the decomposition
  \begin{equation*}
    [\g,\g] = ([\g,\g]\cap\gk) \oplus ([\g,\g]\cap\gp) 
  \end{equation*}
  is a Cartan decomposition.
\end{Lemma}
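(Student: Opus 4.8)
The plan is to extract everything from the single structural hypothesis that $\g$ is stable under the adjunction $X\mapsto X^*$, combined with the positive definite scalar product $\scal{X}{Y}=\tr(X^*Y)$ and the identity $\scal{[Y,X]}{Z}=\scal{X}{[Y^*,Z]}$ recorded just before the statement. This identity says exactly that, for $\scal{\cdot}{\cdot}$ restricted to $\g$ (which remains positive definite), the operator $\ad(Y)$ has adjoint $\ad(Y^*)$; since $Y^*\in\g$, the image $\ad(\g)\subseteq\glin(\g)$ is itself stable under adjoints. I would also record at the outset that $*$ is self-adjoint for $\scal{\cdot}{\cdot}$ (as $\scal{X^*}{Y}=\tr(XY)=\scal{X}{Y^*}$), so that orthogonal complements of adjunction-stable subspaces are again adjunction-stable.

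The heart of the argument is to show the adjoint representation of $\g$ on itself is completely reducible, which forces reductivity. Given an ideal $\mathfrak{i}\subseteq\g$, its orthogonal complement $\mathfrak{i}^\perp$ is again an ideal: for $X\in\mathfrak{i}^\perp$, $Y\in\g$, $Z\in\mathfrak{i}$ one has $\scal{[Y,X]}{Z}=\scal{X}{[Y^*,Z]}=0$ since $[Y^*,Z]\in\mathfrak{i}$. Hence $\g$ is an orthogonal direct sum of irreducible ideals. Standard module bookkeeping then separates these into one-dimensional central pieces, which assemble to $\gz(\g)$, and non-central pieces: each such $\mathfrak{g}_i$ is perfect (its derived algebra is a $\g$-submodule, hence all of $\mathfrak{g}_i$) and simple (its ideals are $\g$-submodules, since brackets with other summands vanish). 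This yields $\g=\gz(\g)\oplus[\g,\g]$ with $[\g,\g]$ a sum of simple ideals, hence semisimple, giving the first two assertions of the lemma.

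Next I would settle orthogonality and stability. Any central $X$ gives $\scal{X}{[Y,Z]}=\scal{[Y^*,X]}{Z}=0$, so $\gz(\g)\perp[\g,\g]$; with the dimension count this forces $[\g,\g]=\gz(\g)^\perp$. The center is adjunction-stable, since for $X\in\gz(\g)$ and any $Y\in\g$ we have $[X^*,Y]=[Y^*,X]^*=0$ using $Y^*\in\g$; and $[\g,\g]$ is adjunction-stable because $[X,Y]^*=[Y^*,X^*]\in[\g,\g]$. Both spaces are therefore stable under $X\mapsto X^*$, which is the middle part of the statement.

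Finally, for the Cartan decomposition of $\mathfrak{s}:=[\g,\g]$, adjunction-stability lets me split each $X\in\mathfrak{s}$ into $\tfrac12(X-X^*)\in\mathfrak{s}\cap\gk$ and $\tfrac12(X+X^*)\in\mathfrak{s}\cap\gp$, giving $\mathfrak{s}=(\mathfrak{s}\cap\gk)\oplus(\mathfrak{s}\cap\gp)$. The map $\theta(X)=-X^*$ acts as $+1$ on $\mathfrak{s}\cap\gk$ and $-1$ on $\mathfrak{s}\cap\gp$, and is an automorphism since $\theta[X,Y]=-[X,Y]^*=[X^*,Y^*]=[\theta X,\theta Y]$. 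For positive definiteness of $b_\theta$, I would use $\ad(\theta X)=-\ad(X^*)$ on $\mathfrak{s}$ to get $b_\theta(X,X)=\tr_{\mathfrak{s}}\bigl(\ad(X^*)\ad(X)\bigr)$; as $\ad(X^*)$ is the adjoint of $\ad(X)$ for $\scal{\cdot}{\cdot}$, this trace is a sum of squares $\ge0$, vanishing only when $\ad(X)=0$ on $\mathfrak{s}$, i.e. when $X\in\gz(\mathfrak{s})=\gz(\g)\cap\mathfrak{s}=0$. The step I expect to be the main obstacle is the passage from complete reducibility of the adjoint representation to the reductive decomposition with semisimple $[\g,\g]$: it is standard but needs the careful module-theoretic identification of the non-central irreducible ideals as simple Lie subalgebras.
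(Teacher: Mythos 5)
Your proof is correct, but its core reaches semisimplicity by a genuinely different route than the paper's. The paper obtains the decomposition and orthogonality exactly as you do, from the identity $\scal{X}{[Y,Z]}=\scal{[Y^*,X]}{Z}$ (your transposed variant $\scal{[Y,X]}{Z}=\scal{X}{[Y^*,Z]}$ is the same fact, namely $\ad(Y)^*=\ad(Y^*)$), which gives $[\g,\g]^\perp\cap\g=\gz(\g)$ in one stroke. For semisimplicity of $[\g,\g]$, however, the paper works with the radical $\mathfrak{r}$: since adjunction is an anti-automorphism, $\mathfrak{r}$ is $*$-stable, so the first part applies to $\mathfrak{r}$ itself and yields $[\mathfrak{r},\mathfrak{r}]=\bigl[[\mathfrak{r},\mathfrak{r}],[\mathfrak{r},\mathfrak{r}]\bigr]$, whence $\mathfrak{r}$ is abelian by solvability; then the computation $\scal{[R,X]}{[R,X]}=\scal{[R^*,[R,X]]}{X}=0$ places $\mathfrak{r}$ inside $\gz(\g)$, so $[\g,\g]$ has trivial radical. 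You instead prove complete reducibility of the adjoint representation (the orthocomplement of an ideal is an ideal --- correctly using only $*$-stability of the ambient $\g$, never of the ideal itself, so the induction through minimal ideals is sound), and then sort the minimal ideals into central lines and simple summands; your identifications (derived algebra of a summand is a $\g$-submodule, ideals of a summand are ideals of $\g$ because cross-brackets vanish, $\gz(\g)$ is exactly the sum of the central lines) all check out. Your route requires the module bookkeeping you flag as the main obstacle, where the paper needs only two short inner-product computations; in exchange it proves strictly more, namely the full splitting of $\g$ into its center plus simple ideals. The Cartan-decomposition step is essentially identical in both arguments (via $\ad(X)^*_{\g'}=\ad(X^*)$ and positivity of $\tr(A^*A)$, with $\gz(\mathfrak{s})=\gz(\g)\cap\mathfrak{s}=0$), and your explicit verifications of the $*$-stability of $\gz(\g)$ and $[\g,\g]$, which the paper states but leaves essentially implicit in its proof, are correct.
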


\begin{proof}
  Let $X\in\g$; we have $X\in[\g,\g]^{\perp}$ if and only if
  \begin{equation*}
    0 = \scal{X}{[Y,Z]} = \scal{[Y^*,X]}{Z} \quad\forall Y,Z\in\g;
  \end{equation*}
  thus since $\g$ is stable under the adjunction, we have $[Y^*,X]=0$
  for all $Y^*\in\g$,
  that is $X\in\gz(\g)$. This shows that $\g = [\g,\g] \oplus \gz(\g)$, and
  $[\g,\g]$ and $\gz(\g)$ are orthogonal.
  
  Let now $\mathfrak{r}$ be the radical of $\g$; it is clearly stable under
  $X\mapsto X^*$, so we can apply to $\mathfrak{r}$ what we have proved
  for $\g$, so we have
  $\mathfrak{r}=[\mathfrak{r},\mathfrak{r}]\oplus\gz(\mathfrak{r})$.
  If $\mathfrak{r}'=[\mathfrak{r},\mathfrak{r}]$, we have
  \begin{equation*}
    \mathfrak{r}' = [\mathfrak{r},\mathfrak{r}]
      = [\mathfrak{r}'\oplus\gz(\mathfrak{r}), \mathfrak{r}'\oplus\gz(\mathfrak{r})]
      = [\mathfrak{r}',\mathfrak{r}'];
  \end{equation*}
  therefore, since $\mathfrak{r}$ is solvable, we have
  $[\mathfrak{r},\mathfrak{r}]=\{0\}$. Now,
  since $\mathfrak{r}$ is an ideal of $\g$, we have
  $[\mathfrak{r},[\mathfrak{r},\g]]\subseteq[\mathfrak{r},\mathfrak{r}]=\{0\}$,
  and then
  \begin{equation*}
    \scal{[R,X]}{[R,X]} = \scal{[R^*,[R,X]]}{X} =0
    \quad
    \forall R\in\mathfrak{r},\ X\in \g.
  \end{equation*}
  Therefore we have $\mathfrak{r}\subseteq\gz(\g)$,
  and $[\g,\g]$ is semisimple.

  Let $\g'=[\g,\g]$, $\gk'=[\g,\g]\cap\gk$, and $\gp'=[\g,\g]\cap\gp$,
  and let us show that $\g'=\gk'\oplus\gp'$ is a Cartan decomposition.
  The corresponding automorphism is given by $\theta(X)=-X^*$,
  and the bilinear form is
  \begin{equation*}
    b_{\theta}(X,Y)= \tr_{\g'}\bigl(\ad(X^*)\,\ad(Y)\bigr)
  \end{equation*}
  In order to show that $b_{\theta}$ is positive definite,
  it suffices to show that $\ad(X^*)=\ad(X)^*_{\g'}$,
  where $\alpha^*_{\g'}$ denotes the adjoint of $\alpha\in\glin(\g')$
  for the scalar product $\scal{X}{Y}=\tr(X^*Y)$ on $\g'$.
  We have
  \begin{equation*}
    \scal{\ad(X^*)Y}{Z} = \scal{[X^*,Y]}{Z} 
    = \scal{Y}{[X,Z]}
    = \scal{Y}{\ad(X)Z}, \quad \forall X,Y,Z\in\g'.
  \end{equation*}
  Therefore $\ad(X)^*_{\g'}=\ad(X^*)$.
\end{proof}

\begin{Lemma}\label{lemUniquenessDecomposition}
  Let $E$ be a Euclidean space, and let $k_1,k_2\in\Orth(E)$ and
  $p_1,p_2\in\GP(E)$ such that $k_1p_1=k_2p_2$.
  Then $k_1=k_2$ and $p_1=p_2$.
\end{Lemma}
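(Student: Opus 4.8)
The plan is to eliminate the orthogonal factors by passing to $g^*g$, where $g=k_1p_1=k_2p_2$, thereby reducing the problem to the uniqueness of the positive definite square root. Since $k_i$ is orthogonal we have $k_i^*=k_i^{-1}$, and since $p_i$ is selfadjoint we have $p_i^*=p_i$; hence
\begin{equation*}
  g^*g = (k_1p_1)^*(k_1p_1) = p_1^*k_1^*k_1p_1 = p_1^2,
\end{equation*}
and likewise $g^*g=p_2^2$. Therefore $p_1^2=p_2^2$, with both $p_1$ and $p_2$ positive definite and selfadjoint.

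Next I would invoke the uniqueness of the positive square root to conclude $p_1=p_2$. Concretely, since $\exp\colon\gp(E)\to\GP(E)$ is a diffeomorphism (in particular a bijection), write $p_i=\exp(H_i)$ with $H_i\in\gp(E)$ uniquely determined. Then $p_i^2=\exp(2H_i)$ lies again in $\GP(E)$, and the equality $\exp(2H_1)=p_1^2=p_2^2=\exp(2H_2)$ together with the injectivity of $\exp$ on $\gp(E)$ forces $2H_1=2H_2$, whence $H_1=H_2$ and $p_1=p_2$. Finally, $p_1$ is invertible, so from $k_1p_1=k_2p_2=k_2p_1$ we may cancel $p_1$ on the right to obtain $k_1=k_2$, completing the argument.

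The one substantive point is the uniqueness of the positive definite selfadjoint square root, which here I reduce to the injectivity of $\exp$ on $\gp(E)$ recorded just before the statement; everything else is formal manipulation. Alternatively, one could argue by the spectral theorem: a positive definite selfadjoint $p$ is diagonalizable with positive eigenvalues, $p^2$ has the same eigenspaces with squared eigenvalues, and the eigenspaces together with the positivity of the eigenvalues recover $p$ uniquely from $p^2$. I expect no real obstacle beyond citing this standard fact correctly.
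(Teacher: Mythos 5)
Your proof is correct and follows essentially the same route as the paper: both reduce the statement to $p_1^2=p_2^2$ (you via $g^*g$, the paper by multiplying $p_1=k_1^*k_2p_2$ with its adjoint form $p_1=p_2k_2^*k_1$) and then invoke uniqueness of the positive definite selfadjoint square root before cancelling $p_1$. Your only addition is to justify that uniqueness explicitly through the injectivity of $\exp\colon\gp(E)\to\GP(E)$, a fact the paper records just before Theorem~2 and then uses without comment, so this is a legitimate filling-in rather than a divergence.
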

\begin{proof}
  We have $p_1=k_1^*k_2p_2$, and also $p_1=p_1^*=p_2k_2^*k_1$.
  Therefore
  \begin{equation*}
    p_1^2 = (p_2k_2^*k_1)(k_1^*k_2p_2) =p_2^2.
  \end{equation*}
  Since $p_1$ and $p_2$ are positive definite, we have $p_1=p_2$.
  The equality $k_1=k_2$ follows.
\end{proof}

\begin{proof}[Proof of Theorem~\ref{thmStructureSelfdual}]
  (i) This follows from Lemma~\ref{lemDecomposition}.
    
  (ii) and (iii)
  Let $\g'=[\g,\g]$, $\gk'=[\g,\g]\cap\gk$, and $\gp'=[\g,\g]\cap\gp$.  
  According to
  Theorem~\ref{thmSemisimpleGroup}(\ref{clmCartanSemisimpleGroup}),
  we have
  \begin{equation*}
    \G'^0 = \GK'^0\exp(\gp'),
  \end{equation*}
  where $\G'^0=\exp(\g')$, $\GK'^0=\exp(\gk')$.
  It follows from Lemma~\ref{lemDecomposition} that we have
  \begin{gather*}
    \GK^0 = \GK'^0\,\bigl(\GZ(\G)^0\cap\GK\bigr), \qquad
    \exp(\gp) = \exp(\gp')\,\bigl(\GZ(\G)^0\cap\GP\bigr), \\
    \G^0 = \GK^0\exp(\gp).
  \end{gather*}
  Now, let $g\in\G$; as we have $g^*g\in\G^0$,
  there exists $\tilde{k}\in\GK^0$ and $H\in\gp$ such that
  $g^*g=\tilde{k}\exp(H)$.
  Since $g^*g$ is symmetric positive definite,
  by Lemma~\ref{lemUniquenessDecomposition},
  we have in fact $g^*g=\exp(H)$. Then we have $g=kp$ with
  \begin{equation*}
    k=g\exp(-H/2)\in\GK \quad\text{and}\quad p=\exp(H/2)\in\exp(\gp).
  \end{equation*}
  In particular, if $p\in\GP$, we have a decomposition $p=kp'$
  with $p'\in\exp(\gp)$. By Lemma~\ref{lemUniquenessDecomposition},
  we have $p=p'$; therefore, the equality $\GP=\exp(\gp)$ holds.
  The same Lemma~\ref{lemUniquenessDecomposition}
  shows the uniqueness of the decomposition $g=kp$.
  
  (iv) According to
  Theorem~\ref{thmSemisimpleGroup}(\ref{clmIwasawaSemisimpleGroup}),
  there exists a connected triangular Lie subgroup $\GD'\subseteq\G'$
  such that $\G'^0=\GK'^0\GD$. The group
  \begin{equation*}
    \GD = \GD'\,\bigl(\GZ(\G)^0\cap\GP\bigr)
  \end{equation*}
  is connected and triangular, and we have $\G^0=\GK^0\GD$.
  Now, we have
  \begin{equation*}\tag*{\qedhere}
    \GK\GD = \GK\GK^0\GD = \GK\G^0 = \GK\GK^0\GP = \GK\GP = \G.
  \end{equation*} 
\end{proof}

\subsection{Structure of the group of Voronoi space}

In order to apply Theorem~\ref{thmStructureSelfdual} to the group
of a Voronoi space, we have to check that the assumption
of the theorem is satisfied.

\begin{Lemma}\label{lemConnection}
  Let $\quartet{V}{\Q}{Z}{\G}$ be a Voronoi space, and let $Q\in\Q$.
  Then, for every $g\in\G$, we have $g^*_Qg\in\G^0$.
\end{Lemma}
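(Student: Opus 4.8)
The plan is to use the transitivity of the \emph{identity component} $\G^0$ on $\Q$ to reduce an arbitrary $g\in\G$ to an element of $\G^0$, up to a $Q$-orthogonal factor, and then to observe that $*_Q$ preserves $\G^0$. The first and essential step is to establish that $\G^0$, and not merely $\G$, acts transitively on $\Q$. Recall from Definition~\ref{defGroup} that $\GK_Q$ is exactly the stabilizer of $Q$ under the action $Q'\mapsto Q'\circ g$, since $Q\circ g=Q$ is equivalent to $g^*_Q g=\id$, i.e.\ to $g^*_Q=g^{-1}$. The orbit map $\Phi:\G\to\Q$, $\Phi(g)=Q\circ g$, is surjective by property~(iii) of Definition~\ref{defVoronoiSpace} and identifies $\Q$ with the homogeneous space $\G/\GK_Q$; in particular it is a submersion, hence open. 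Since $\G^0$ is an open (and normal) subgroup of $\G$, the sets $\Phi(\G^0 g)$ are open, and using $Q\circ k=Q$ for $k\in\GK_Q$ one checks that any two of them either coincide or are disjoint. As $\Q$ is connected by Definition~\ref{defVoronoiSpace}(b), there can be only one such set, so $\Phi(\G^0)=\Q$. Equivalently, $\G=\GK_Q\,\G^0$.

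Now fix $g\in\G$. By the previous step there exist $k\in\GK_Q$ and $h\in\G^0$ with $g=kh$. Since the $Q$-adjoint is an anti-homomorphism, $(kh)^*_Q=h^*_Q\,k^*_Q$, and since $k^*_Q=k^{-1}$ because $k\in\GK_Q$, we obtain
\[
  g^*_Q g=(kh)^*_Q(kh)=h^*_Q\,k^*_Q k\,h=h^*_Q h .
\]
Thus it suffices to prove $h^*_Q h\in\G^0$ for $h\in\G^0$; note that the $Q$-orthogonal factor has disappeared and that the positive-definiteness of $g^*_Q g$ plays no role.

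Finally I would argue that the $Q$-adjoint map $\gamma:\G\to\G$, $\gamma(x)=x^*_Q$, preserves $\G^0$. It lands in $\G$ by property~(ii) of Definition~\ref{defVoronoiSpace}, and it is continuous, as in matrix coordinates $x^*_Q$ corresponds to $M^{-1}x^{\mathsf T}M$, where $M$ is the symmetric matrix of $Q$. Being continuous with $\gamma(\id)=\id$, it maps the connected set $\G^0$ into the connected component of the identity, so $\gamma(\G^0)\subseteq\G^0$. Hence $h^*_Q\in\G^0$, and therefore $g^*_Q g=h^*_Q h\in\G^0$ because $\G^0$ is a subgroup. The only genuinely delicate point is the first paragraph---promoting transitivity of $\G$ to transitivity of $\G^0$ via the connectedness of $\Q$; the remaining steps are formal manipulations of $Q$-adjoints together with a continuity argument.
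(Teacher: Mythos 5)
Your proof is correct and follows essentially the same route as the paper's: the paper likewise invokes the connectedness of $\Q$ to produce $h\in\G^0$ with $Q\circ g=Q\circ h$ (your decomposition $g=kh$, $k\in\GK_Q$, is exactly this fact restated), and then concludes $g^*_Qg=h^*_Qh\in\G^0$. You merely spell out two steps the paper leaves implicit --- the openness-of-orbits argument promoting transitivity of $\G$ to transitivity of $\G^0$, and the continuity argument showing $h^*_Q\in\G^0$.
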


\begin{proof}
  Let $g\in\G$, and let $Q'=Q\circ g$. As $\Q$ is connected, there exists
  $h\in\G^0$ such that $Q'=Q\circ h$. Then we have
  \begin{equation*}
    Q'(x,y) = Q(x,g^*_Qgy) = Q(x,h^*_Qhy), \quad\forall x,y\in V.
  \end{equation*}
  Therefore, we have $g^*_Qg=h^*_Qh\in\G^0$.
\end{proof}

As a first application of Theorem~\ref{thmStructureSelfdual},
we have the following description of the set of quadratic forms
of a Voronoi space.

\begin{Proposition}\label{propAdjoinization}
  Let $\quartet{V}{\Q}{Z}{\G}$ a Voronoi space, and let $Q\in\Q$.
  For $H\in\gp_Q$, we define $Q_H$ by
  \begin{equation*}
    Q_H(x) = Q(x,\exp(H)x).
  \end{equation*}
  Then we have
  \begin{equation*}
    \Q = \{Q_H \mid H\in\gp_Q \}.
  \end{equation*}
  In particular, if, for $Q\in\Q$ and $\epsilon>0$, we set
  \begin{equation*}
    V_{Q,\epsilon} = \bigl\{ Q_H \bigm| H\in\gp_Q\ \text{and}\
    \norm{H}_{Q}<\epsilon \},
  \end{equation*}
  then the family $\{V_{Q,\epsilon}\}_{\epsilon>0}$
  is a basis of neighborhoods of $Q$ in~$\Q$.
\end{Proposition}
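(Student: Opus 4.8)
The plan is to fix a reference form $Q\in\Q$, regard $V$ as the Euclidean space $E=(V,Q)$, and bring Theorem~\ref{thmStructureSelfdual} to bear. With this Euclidean structure the $Q$-adjoint $g^*_Q$ is exactly the usual adjoint on $E$, so that $\GK_Q=\G\cap\Orth(E)$ and $\gp_Q=\g\cap\gp(E)$. The two hypotheses of Theorem~\ref{thmStructureSelfdual} are met: property~(ii) of Definition~\ref{defVoronoiSpace} gives $g^*_Q\in\G$, and Lemma~\ref{lemConnection} gives $g^*_Qg\in\G^0$. The theorem then hands me the two facts I shall use throughout: the positive part $\GP=\G\cap\GP(E)$ equals $\exp(\gp_Q)$, and every $g\in\G$ factors as $g=kp$ with $k\in\GK_Q$ and $p\in\GP$.

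The core identity is $Q_H=Q\circ\exp(H/2)$. Indeed, since $H\in\gp_Q$ the operator $\exp(H/2)$ is $Q$-selfadjoint, so $(Q\circ\exp(H/2))(x)=Q\bigl(\exp(H/2)x,\exp(H/2)x\bigr)=Q\bigl(x,\exp(H)x\bigr)=Q_H(x)$. As $\exp(H/2)\in\GP\subseteq\G$, property~(iii) of Definition~\ref{defVoronoiSpace} gives $Q_H\in\Q$, which is one inclusion. Conversely, take $Q'\in\Q$; by transitivity $Q'=Q\circ g$ for some $g\in\G$, and writing $g=kp$ as above I get $(Q\circ g)(x)=Q(px,k^*_Qkpx)=Q(px,px)=(Q\circ p)(x)$ because $k^*_Qk=\id_V$. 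Since $p\in\GP=\exp(\gp_Q)$ I may write $p=\exp(H/2)$ with $H\in\gp_Q$, whence $Q'=Q_H$. This establishes $\Q=\{Q_H\mid H\in\gp_Q\}$.

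For the topological statement I would study the map $\Phi\colon\gp_Q\to\Q$, $\Phi(H)=Q_H$. It is smooth and, by the previous paragraph, surjective; it is injective because $Q_H=Q_{H'}$ forces $\exp(H)=\exp(H')$ (by nondegeneracy and polarization) and $\exp$ is a diffeomorphism on $\gp(E)\supseteq\gp_Q$. Differentiating at $0$ gives $d\Phi_0(\dot H)=\bigl[x\mapsto Q(x,\dot H x)\bigr]$, again injective on $\gp_Q$. To see that $d\Phi_0$ is onto $T_Q\Q$, I identify $\Q$ with the homogeneous space $\G/\GK_Q$, the stabilizer of $Q$ under $g\mapsto Q\circ g$ being precisely $\GK_Q$; differentiating the orbit map then shows $T_Q\Q=\{x\mapsto Q(x,Hx)\mid H\in\gp_Q\}$, exactly the image of $d\Phi_0$. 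Hence $d\Phi_0$ is an isomorphism and $\Phi$ is a local diffeomorphism at $0$. Consequently $\Phi$ carries the norm-balls $\{H\in\gp_Q\mid\norm{H}_Q<\epsilon\}$, which form a neighborhood basis of $0$, onto the sets $V_{Q,\epsilon}$, which therefore form a neighborhood basis of $Q=\Phi(0)$ in~$\Q$.

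The routine inclusions and the adjoint bookkeeping are mechanical; the one step needing genuine care is the surjectivity of $d\Phi_0$, that is the identification of $T_Q\Q$. This is exactly where the transitivity of $\G$ on $\Q$, and the resulting homogeneous-space structure $\Q\cong\G/\GK_Q$, is essential: it is what forces $\dim\gp_Q=\dim\Q$ and pins down the tangent space, whereas the elementary arguments alone only yield that $\Phi$ is an injective immersion at $0$.
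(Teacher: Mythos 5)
Your proof is correct and follows essentially the same route as the paper: the core equality $\Q=\{Q_H\mid H\in\gp_Q\}$ rests, exactly as in the paper, on Theorem~\ref{thmStructureSelfdual} (made applicable by Definition~\ref{defVoronoiSpace}(ii) and Lemma~\ref{lemConnection}), your factorization $g=kp$ with $k^*_Qk=\id_V$ being the same computation as the paper's direct observation $g^*_Qg=p^2\in\GP_Q=\exp(\gp_Q)$. The only divergence is that you carefully work out the neighborhood-basis claim (injectivity of $H\mapsto Q_H$ via polarization, and the identification $T_Q\Q=\{x\mapsto Q(x,Hx)\mid H\in\gp_Q\}$ through the orbit map $\Q\cong\G/\GK_Q$), a part the paper dismisses as ``straightforward''; your elaboration is sound and is in fact the natural way to justify it.
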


\begin{proof}
  Let $Q'\in\Q$, and let $g\in\G$ such that $Q'=Q\circ g$.
  According to Theorem~\ref{thmStructureSelfdual}, we have
  $g_Q^*g\in\GP_Q=\exp(\gp_Q)$,
  therefore there exists an $H\in\gp_Q$ such that $g_Q^*g=\exp(H)$,
  and we have $Q'=Q_H$.
  Conversely, for $H\in\gp_Q$, we have $Q_H=Q\circ g$ with $g=\exp(H/2)$,
  so $Q_H\in\Q$. The rest of the lemma is straightforward.
\end{proof}

\section{Characterization of extremality \`a la Voronoi}\label{sectCharacterizationVoronoi}

The notions of perfection and eutaxy play
a key role in theories involving problems of extremality
for a Hermite-type function,
because they are the main ingredients of
the theory of Voronoi and its generalizations.

In this section,
we introduce the notion of perfection and eutaxy
in a Voronoi space,
and we prove a characterization
of extremality and strict extremality
in the spirit of the one of Voronoi
(Theorem~\ref{thmVoronoi}).
Finally, we prove a statement
(Proposition~\ref{propWeaklyPerfectOnly})
which provides an efficient tool to check
that extremality implies strict extremality
in many Voronoi spaces.

\subsection{Neighborhoods in Voronoi space}

Let $\quartet{V}{\Q}{Z}{\G}$ be a Voronoi space.
Firstly, we need estimates of the value taken by forms
in a neighborhood of a given quadratic form $Q$ in $\Q$.
Recall (Proposition~\ref{propAdjoinization}) that such quadratic forms
are of the form $Q_H(x)=Q(x,\exp(H)x)$,
with $H\in\gp_Q$ and $\norm{H}_Q$ small. Recall that we have defined
\begin{equation*}
  \norm{H}_Q^2 = \sup_{x\in V\smz} \frac{Q(Hx)}{Q(x)},
\end{equation*}
The triangular inequality implies $\abs{Q(x,Hx)} \le \norm{H}\times Q(x)$.

The following lemma states that the functions
$t\mapsto Q_{tH}(x)$ are strictly convex near to $0$,
except when they are constant.
The convexity is a key property in all characterizations of
extremality by perfection and eutaxy. 

\begin{Lemma}\label{lemEstimationNeighborhood}
  Let $Q\in\Q$ and  $0<t\le1$, and $H\in\gp_Q$ such that $\norm{H}_Q\le1$.
  For every $x\in V$, we have
  \begin{equation*}
    Q(x) + t\,Q(x,Hx) \le Q_{tH}(x) \le Q(x) + t\,Q(x,Hx) + t^2Q(Hx),
  \end{equation*}
  with equality if and only if $Hx=0$.
\end{Lemma}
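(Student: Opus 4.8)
The plan is to diagonalize $H$ and reduce both inequalities to elementary scalar estimates for the exponential. Since $H\in\gp_Q$ is $Q$-selfadjoint, the spectral theorem applied to $H$ as a selfadjoint operator on the Euclidean space $(V,Q)$ provides a $Q$-orthonormal basis $e_1,\dots,e_N$ of eigenvectors, say $He_i=\lambda_i e_i$ with $\lambda_i\in\RR$ and $N=\dim V$. From the definition of the norm,
\begin{equation*}
  \norm{H}_Q^2 = \sup_{x\in V\smz}\frac{Q(Hx)}{Q(x)} = \max_i \lambda_i^2,
\end{equation*}
so the hypothesis $\norm{H}_Q\le1$ is exactly the condition $\abs{\lambda_i}\le1$ for all~$i$.

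Next I would expand everything in this basis. Writing $x=\sum_i c_i e_i$, one has $Q(x)=\sum_i c_i^2$, $Q(x,Hx)=\sum_i c_i^2\lambda_i$, and $Q(Hx)=\sum_i c_i^2\lambda_i^2$; moreover $\exp(tH)e_i=e^{t\lambda_i}e_i$, whence
\begin{equation*}
  Q_{tH}(x) = Q\bigl(x,\exp(tH)x\bigr) = \sum_i c_i^2\,e^{t\lambda_i}.
\end{equation*}
Both asserted inequalities then follow termwise from the two scalar estimates $1+u\le e^u\le 1+u+u^2$, applied to $u=t\lambda_i$, which lies in $[-1,1]$ because $0<t\le1$ and $\abs{\lambda_i}\le1$. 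Indeed, summing $e^{t\lambda_i}\ge 1+t\lambda_i$ against $c_i^2$ gives the lower bound, and summing $e^{t\lambda_i}\le 1+t\lambda_i+(t\lambda_i)^2$ gives the upper bound, since $\sum_i c_i^2 (t\lambda_i)^2 = t^2 Q(Hx)$.

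The left inequality $e^u\ge1+u$ is the standard convexity bound, valid for all $u\in\RR$, with equality if and only if $u=0$. For the right one I would treat the two signs separately: for $0\le u\le1$, writing $e^u-1-u=u^2\sum_{j\ge0}u^j/(j+2)!$ and bounding the series by $\sum_{j\ge0}1/(j+2)!=e-2<1$; for $u<0$, using $e^u\le1+u+\tfrac12u^2\le1+u+u^2$, which follows from a short second-derivative argument. In both cases the inequality is strict unless $u=0$. Translating back, equality in either bound forces $t\lambda_i=0$, hence $\lambda_i=0$ (as $t>0$), for every $i$ with $c_i\ne0$; this is precisely the condition $Hx=\sum_i c_i\lambda_i e_i=0$. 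The main point to get right is the upper scalar estimate $e^u\le1+u+u^2$ over the whole interval $[-1,1]$: this is where both hypotheses $\norm{H}_Q\le1$ and $t\le1$ are used, and it is the only nonroutine step, since $u\mapsto1+u+u^2-e^u$ is not concave throughout $[-1,1]$ and so the positive and negative ranges must be estimated by different arguments.
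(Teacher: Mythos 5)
Your proof is correct, but it takes a genuinely different route from the paper's. The paper works directly with the operator power series $Q_{tH}(x)=\sum_{k\ge0}\frac{t^k}{k!}Q(x,H^kx)$, isolates the terms up to order $t^2$, and bounds the tail $S=\sum_{k\ge3}\frac{t^k}{k!}Q(Hx,H^{k-1}x)$ via $\abs{Q(Hx,H^{k-1}x)}\le\norm{H}_Q^{k-2}Q(Hx)$, obtaining $\abs{S}\le t^2(e-5/2)\,Q(Hx)\le\frac12\,t^2Q(Hx)$; the sandwich and the equality case then drop out at once. You instead invoke the spectral theorem for the \pf{Q}-selfadjoint operator $H$ (legitimate, since $\gp_Q$ consists precisely of \pf{Q}-selfadjoint elements and $Q$ is positive definite) and reduce everything to the scalar estimates $1+u\le e^u\le1+u+u^2$ for $u=t\lambda_i\in[-1,1]$. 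All your steps check out: the identification $\norm{H}_Q=\max_i\abs{\lambda_i}$ is right, the termwise summation against $c_i^2$ is right, your two-case verification of the upper scalar bound (series bound $e-2<1$ for $u\ge0$; the monotonicity argument giving $e^u\le1+u+\tfrac12u^2$ for $u<0$, since $\frac{d}{du}\bigl(1+u+\tfrac12u^2-e^u\bigr)=1+u-e^u<0$ there) is sound, and the equality analysis correctly yields $\lambda_i=0$ exactly on the support of $x$, i.e.\ $Hx=0$, in both directions. What your diagonalization buys: the equality discussion is more transparent, and it exposes that the lower bound $Q_{tH}(x)\ge Q(x)+t\,Q(x,Hx)$ needs neither $t\le1$ nor $\norm{H}_Q\le1$, since $e^u\ge1+u$ is global --- the hypotheses are consumed only by the upper estimate, a small sharpening invisible in the paper's argument, which feeds the single tail bound into both inequalities. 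What the paper's approach buys: it avoids the spectral theorem entirely and proceeds by pure norm estimates on the series, a style that would transfer to situations where no convenient eigenbasis is available.
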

\begin{proof}
  We have
  \begin{align*}
    Q_{tH}(x)
      &= Q\bigl(x,\exp(tH)x\bigr)
      =   \sum_{k\ge0} \frac{t^k}{k!} Q(x,H^kx) \\
      &= Q(x) + t\,Q(x,Hx) + \frac{t^{2\!}}{2}Q(Hx)
         + \sum_{k\ge3}\frac{t^k}{k!} Q(Hx,H^{k-1}x).
  \end{align*}
  Let $S$ be the sum at the end of that expression.
  We have $\bigl|Q(Hx,H^{k-1}x)\bigr| \le \norm{H}_Q^{k-2}\* Q(Hx)$, and therefore
  \begin{equation*}
    \mathopen|S\mathclose|
    = \biggl|\sum_{k\ge3}\frac{t^k}{k!} Q(Hx,H^{k-1}x)\biggr|
    \le t^2\sum_{k\ge3}\frac{\bigl(t\norm{H}_Q\bigr)^{k-2}}{k!} Q(Hx)
  \end{equation*}
  Now, using $t\le1$ and $\norm{H}_Q\le1$, we get
  \begin{equation*}
    \mathopen|S\mathclose|
    \le t^2 \sum_{k\ge3}\frac{1}{k!} Q(Hx)
    = t^2\bigl(e-5/2\bigr)\,Q(Hx)
    \le \frac12 t^2Q(Hx),
  \end{equation*}
  with equality if and only if $Q(Hx)=0$. From this, we deduce
  \begin{equation*}
    Q(x) + t\,Q(x,Hx) \le  Q\bigl(x,\exp(tH)x\bigr) \le Q(x) + t\,Q(x,Hx) + t^2Q(Hx),
  \end{equation*}
  with equality if and only if $Hx=0$.
\end{proof}

The next lemma is standard in all Voronoi theories.
It states that, when studying
the value of the Hermite function on quadratic forms
in a neighborhood of~$Q\in\Q$,
it suffices to consider the values taken by those quadratic forms
on the minimal layer of~$Q$.

\begin{Lemma}\label{lemMinimalVectorsInNeighborhood}
  Let $\quartet{V}{\Q}{Z}{\G}$ be a Voronoi space, and let $Q\in\Q$.
  Then there exists a neighborhood $U\subseteq\Q$ of $Q$
  such that $Q'_{\min}\subseteq Q_{\min}$ for every $Q'\in U$.
\end{Lemma}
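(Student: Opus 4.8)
The plan is to use the neighborhood basis $\{V_{Q,\epsilon}\}_{\epsilon>0}$ provided by Proposition~\ref{propAdjoinization}, so that every nearby form is of the shape $Q_H(x)=Q(x,\exp(H)x)$ with $H\in\gp_Q$ and $\norm{H}_Q<\epsilon$, and then to control $Q_H$ on the \emph{whole} of $Z$ by a single multiplicative estimate. From Lemma~\ref{lemEstimationNeighborhood} applied with $t=1$, together with the bounds $\abs{Q(x,Hx)}\le\norm{H}_Q\,Q(x)$ and $Q(Hx)\le\norm{H}_Q^2\,Q(x)$, I obtain, for every $x\in V$ and every $H\in\gp_Q$ with $\norm{H}_Q\le\epsilon\le1$,
\begin{equation*}
  (1-\epsilon)\,Q(x) \le Q_H(x) \le (1+3\epsilon)\,Q(x).
\end{equation*}
The essential point is that this holds simultaneously for all $x$; this uniformity is exactly what lets me treat the infinitely many vectors of $Z$ at once.

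Next I would isolate the ``spectral gap'' of $Q$ on $Z$. Write $m=\gamma(Q)$. Since $Z$ is closed and discrete and $Q$ is positive definite, each sublevel set $\{x\in Z\mid Q(x)\le r\}$ is the intersection of a compact ellipsoid with a closed discrete set, hence finite; in particular $Q_{\min}$ is finite. If $Z=Q_{\min}$ the statement is trivial, so I assume otherwise and set $m'=\min\{Q(x)\mid x\in Z,\ Q(x)>m\}$. This minimum is attained: picking any $x_0\in Z$ with $Q(x_0)>m$, the set $\{x\in Z\mid m<Q(x)\le Q(x_0)\}$ is finite and nonempty, so it has a least value $m'>m$.

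Finally I would choose $\epsilon>0$ small enough that $\epsilon\le1$ and $(1-\epsilon)\,m'>(1+3\epsilon)\,m$, which is possible precisely because $m'>m$. For such $\epsilon$ and any $Q_H\in V_{Q,\epsilon}$, the estimate gives $Q_H(y)\le(1+3\epsilon)m$ for every $y\in Q_{\min}$, hence $\gamma(Q_H)\le(1+3\epsilon)m$, while $Q_H(x)\ge(1-\epsilon)Q(x)\ge(1-\epsilon)m'>(1+3\epsilon)m$ for every $x\in Z\setminus Q_{\min}$. Thus no vector outside $Q_{\min}$ can realize $\gamma(Q_H)$, so $(Q_H)_{\min}\subseteq Q_{\min}$, and $U=V_{Q,\epsilon}$ is the required neighborhood.

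I expect the only genuine subtlety to be the uniformity over the infinite set $Z$: a naive argument using continuity of $Q'\mapsto Q'(x)$ for each fixed $x$ controls finitely many vectors but cannot by itself exclude that some far-away vector of $Z$ dips below the new minimum. The two-sided multiplicative bound above is what removes this difficulty, and the discreteness of $Z$ (through finiteness of sublevel sets) is what guarantees the strict gap $m'-m>0$ needed to make the closing numerical inequality hold.
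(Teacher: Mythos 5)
Your proof is correct and follows essentially the same route as the paper's: both use the neighborhood basis of Proposition~\ref{propAdjoinization}, the two-sided bounds on $Q_H(x)$ from Lemma~\ref{lemEstimationNeighborhood} (via $\abs{Q(x,Hx)}\le\norm{H}_Q Q(x)$ and $Q(Hx)\le\norm{H}_Q^2 Q(x)$), and the spectral gap between $\gamma(Q)$ and the next layer to choose $\epsilon$ so that minimal vectors stay strictly below all other vectors of $Z$ uniformly. Your only addition is the explicit verification that the gap minimum $m'$ is attained (finiteness of sublevel sets of $Q$ on the closed discrete set $Z$), a point the paper leaves implicit in its definition of $\delta$.
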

\begin{proof}
  Let $\gamma=\gamma(Q)=\min_{x\in Z}Q(x)$
  and let $\delta=\min_{x\in Z\setminus Q_{\min}}Q(x)-\gamma >0$.
  Let $H\in\gp_Q$ with $\norm{H}_Q\le1$;
  by Lemma~\ref{lemEstimationNeighborhood}, we have, for any $x\in Q_{\min}$,
  \begin{align*}
    Q_H(x) &\le  Q(x) + Q(x,Hx) + Q(Hx)
      \le Q(x) + \norm{H}_Q Q(x) + \norm{H}_Q^2 Q(x) \\
      &\le \gamma \bigl(1+\norm{H}_Q + \norm{H}_Q^2\bigr),
  \end{align*}
  and for any $z\in Z\setminus Q_{\min}$
  \begin{equation*}
    Q_H(z) \ge  Q(z) + Q(z,Hz)
      \ge Q(z) - \norm{H}_Q Q(z) 
      \ge (\gamma+\delta)\bigl(1-\norm{H}_Q\bigr)
  \end{equation*}
  Let $\epsilon>0$ such that $\gamma (1+\eta + \eta^2) < (\gamma+\delta)(1-\eta)$
  for every $\eta$ with $0\le\eta<\epsilon$. Then $Q_H(x)< Q_H(z)$ if $\norm{H}_Q<\epsilon$.
  So we have the claim of the lemma with $U=\{Q_H\mid H\in\gp_Q, \norm{H}_Q<\epsilon\}$.
\end{proof}

\subsection{Perfection and eutaxy}

We introduce now the two fundamental notions, and variants of them,
which enters in characterizations of extremality.

\begin{Definition}\label{defPerfectionAndEutaxy}
  Let $\quartet{V}{\Q}{Z}{\G}$ be a Voronoi space.
  Let $Q\in\Q$ be a quadratic form, and $X$ a nonempty finite subset
  of $V\setminus\{0\}$.
\begin{enumerate}[(a)]
  \item $(Q,X)$ is \emph{eutactic} when, for every $ H\in\g$,
    \begin{equation*}
      \exists x\in X,\ Q(x,Hx)>0
      \quad\Longrightarrow\quad
      \exists y\in X,\ Q(y,Hy)<0.
    \end{equation*}
  \item $(Q,X)$ is \emph{strongly eutactic} when, for every $ H\in\g$,
    \begin{equation*}
      \sum_{x\in X} \frac{Q(x,Hx)}{Q(x)}=0.
    \end{equation*}
  \item $(Q,X)$ is \emph{weakly perfect} when, for every $H\in\g$,
    \begin{equation*}
      \exists c\in\RR,\ \forall x\in X,\ Q(x,Hx)=c\,Q(x)
      \quad\Longrightarrow\quad
      \forall x\in X,\ Hx+H^*_Qx = 2cx.
    \end{equation*}
  \item $(Q,X)$ is \emph{perfect} when, for every $H\in\g$,
    \begin{equation*}
      \exists c\in\RR,\ \forall x\in X,\ Q(x,Hx)=c\,Q(x)
      \quad\Longrightarrow\quad
      H+H^*_Q = 2c\,\id_V.
    \end{equation*}
\end{enumerate}
Moreover, $Q$ is \emph{eutactic}, \emph{perfect}, etc.,
when $(Q,Q_{\min})$ is eutactic, perfect, etc.
\end{Definition}

In the definition above, we can replace $H\in\g$ by $H\in\gp_Q$.
This follows directly from the fact that any $H\in\g$ can be decomposed in $\g$
as the sum of a \pf{Q}-selfadjoint element and a \pf{Q}-antiselfadjoint one,
and that the conditions of the definition are trivially true for \pf{Q}-antiselfadjoint
linear transformations.
Namely, we have:
\begin{enumerate}[(a)]
  \item $(Q,X)$ is eutactic iff, for every $ H\in\gp_Q$,
    \begin{equation*}
      \exists x\in X,\ Q(x,Hx)>0
      \quad\Longrightarrow\quad
      \exists y\in X,\ Q(y,Hy)<0.
    \end{equation*}
  \item $(Q,X)$ is strongly eutactic iff, for every $ H\in\gp_Q$,
    \begin{equation*}
      \sum_{x\in X} \frac{Q(x,Hx)}{Q(x)}=0.
    \end{equation*}
  \item $(Q,X)$ is weakly perfect iff, for every $H\in\gp_Q$,
    \begin{equation*}
      \exists c\in\RR,\ \forall x\in X,\ Q(x,Hx)=c\,Q(x)
      \quad\Longrightarrow\quad
      \forall x\in X,\ Hx = cx.
    \end{equation*}
  \item $(Q,X)$ is perfect iff, for every $H\in\gp_Q$,
    \begin{equation*}
      \exists c\in\RR,\ \forall x\in X,\ Q(x,Hx)=c\,Q(x)
      \quad\Longrightarrow\quad
      H = c\,\id_V.
    \end{equation*}
\end{enumerate}

We mention also a geometric interpretation of these notions,
although we will not use it.
Let us fix $Q\in\Q$ and consider, for $x\in V$, the linear form
$\epsilon_x\in (\gp_Q\oplus\RR\,\id_V)^*$ defined by
$\epsilon_x(H)=Q(x,Hx)/\rbreak Q(x)$.
Let $\tau$ be the linear form on $\g\oplus\RR\,\id_V$ such that
$\tau(H)=0$ for any $H\in\g$ and $\tau(\id_V)=1$;
for most interesting examples of Voronoi spaces, the form $\tau$
is the normalized trace on $V$.
Then
\begin{enumerate}[(a)]
  \item $(Q,X)$ is eutactic iff
    the form $\tau|_{\gp_Q\oplus\RR\,\id_V}$
    is in the geometric interior of the convex hull
    of $\{\epsilon_x\}_{x\in X}\subseteq (\gp_Q\oplus\RR\,\id_V)^*$.
  \item $(Q,X)$ is strongly eutactic iff
    \begin{equation*}
      \frac{1}{\card{X}}\sum_{x\in X} \epsilon_x=\tau|_{\gp_Q\oplus\RR\,\id_V}.
    \end{equation*}
  \item $(Q,X)$ is weakly perfect iff the family
    $\{\tilde{\epsilon}_x\}_{x\in X}$
    generates $\bigl((\gp_Q\oplus\RR\,\id_V)/\mathcal{A}\bigr)^*$, where
    \begin{equation*}
      \mathcal{A} = \{H\in\gp_Q\mid Hx = 0,\ \forall x\in X\},
    \end{equation*}
    and $\tilde{\epsilon}_x(H+\mathcal{A}) = \epsilon_x(H)$.
  \item $(Q,X)$ is perfect iff the family $\{\epsilon_x\}_{x\in X}$
    generates $(\gp_Q\oplus\RR\,\id_V)^*$.
\end{enumerate}
The geometric reformulation of Claim (a)
can be deduced from the Hahn-Banach theorem;
other claims follow from standard finite-dimensional
linear space theory.

\subsection{Characterization of extremality}

The following result, that we call the
\emph{Voronoi characterization of extremality},
characterizes extreme and strictly extreme forms
in terms of perfection and eutaxy.
The characterization of strict extremality is 
known for many particular cases: it
has been proved by Voronoi in the case of
classic Voronoi space \cite{Voronoi},
by Berg\'e and Martinet in the cases
of dual-extremality in classic Voronoi space
\cite[Th\'eor\`eme~3.14]{DualExtreme},
and family of quadratic forms in classic Voronoi space
\cite[Th\'eor\`eme~2.10]{ExtremeAutomorphism},
\cite[Th\'eor\`eme~4.5]{LatticeFamily},
by Coulangeon in the case of exterior powers of classic Voronoi space
\cite[Th\'eor\`eme~3.2.3]{CoulangeonkExtreme},
and Humbert forms \cite[Proposition~3]{CoulangeonVenkov}.
Moreover, in many of the above cases, extremality is shown
to be equivalent to strict extremality; but this question will
be considered in the next paragraph.
However, we do not know 
any appearance in the literature of a
characterization of extremality, not necessarily strict,
similar to the one we give here%
---although this absence is justified in the many cases where
extremality implies strict extremality---,
but a weaker result appears in \cite[Th\'eor\`eme~4.5]{LatticeFamily}.

\begin{Theorem}[(Voronoi characterization)]\label{thmVoronoi}
  Let $\quartet{V}{\Q}{Z}{\G}$  be a Voronoi space.
  \begin{enumerate}[(i)]
    \item A quadratic form $Q\in\Q$ is strictly extreme if and only if
    it is perfect and eutactic.
    \item A quadratic form $Q\in\Q$ is extreme if and only if
      there exists a nonempty subset of $Q_{\min}$
      that is weakly perfect and eutactic.
  \end{enumerate}
\end{Theorem}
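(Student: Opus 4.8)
The plan is to transport everything to the tangent model of Proposition~\ref{propAdjoinization}: the forms near $Q$ are exactly the $Q_H$ with $H\in\gp_Q$ small, and by Lemma~\ref{lemMinimalVectorsInNeighborhood} we may compute $\gamma(Q_H)=\min_{x\in Q_{\min}}Q_H(x)$ on a neighbourhood, the index set $Q_{\min}$ being finite. For a fixed direction $H\in\gp_Q$ the curve $t\mapsto Q_{tH}(x)=Q(x,\exp(tH)x)$ is convex, equals $\gamma$ at $t=0$ with derivative $Q(x,Hx)$, and is constant (hence $\equiv\gamma$) precisely when $Hx=0$; the two-sided estimate of Lemma~\ref{lemEstimationNeighborhood} quantifies this. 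Thus $\gamma(Q_{tH})$ is a minimum of finitely many convex arcs issuing from $\gamma$, so to first order it is driven by the slopes $Q(x,Hx)$ and, in the directions where all these slopes vanish, by whether some active arc is flat, i.e.\ whether $Hx=0$ for some minimal $x$.

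This reduces the whole theorem to a pointwise statement about slopes and kernels. For part~(i), if $Q$ is perfect and eutactic then for every $H\in\gp_Q\smz$ some slope is negative: otherwise all slopes are $\ge0$, eutaxy forces them all to vanish, and perfection (with $c=0$) then gives $H=0$. Since $Q_{\min}$ is finite, $\min_{x}Q(x,Hx)$ is continuous and strictly negative on the unit sphere of $\gp_Q$, hence bounded above by some $-\delta<0$; plugging this and the uniform bound on the quadratic terms into Lemma~\ref{lemEstimationNeighborhood} gives $\gamma(Q_H)<\gamma$ throughout a punctured neighbourhood, i.e.\ strict extremality. Conversely, if $Q$ is strictly extreme it is eutactic, for a direction with all slopes $\ge0$ and one $>0$ would keep $\gamma(Q_{tH})\ge\gamma$ by the lower estimate; and granting eutaxy it is perfect, for a failure of perfection produces an $H$ not equal to any $c\,\id_V$ with all slopes equal to the constant $c\gamma$, which eutaxy forces to be $0$, whence $H\neq0$ and $\gamma(Q_{tH})\ge\gamma$ for $t>0$, again a contradiction. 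Part~(ii), easy direction, repeats this verbatim for a subset $X$ that is eutactic and weakly perfect, the only change being that in the degenerate directions eutaxy gives all slopes zero on $X$ and weak perfection then gives $Hx=0$ for all $x\in X$, so those arcs are flat and $\gamma(Q_H)\le\gamma$. The uniformity over directions is genuine but harmless: near the locus $\{H:Q(x,Hx)=0\ \forall x\in X\}$ weak perfection forces the vectors $Hx$ ($x\in X$) to vanish, so the quadratic corrections are $O(\mathrm{dist}^2)$ while the linear gain is of order $\mathrm{dist}$, and a compactness argument on the sphere of $\gp_Q$ yields a uniform neighbourhood on which $\gamma(Q_H)\le\gamma$.

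It remains to prove the hard half of~(ii): from extremality of $Q$ produce a nonempty eutactic and weakly perfect subset of $Q_{\min}$. The first-order information says there is no $H$ with $Q(x,Hx)>0$ for all $x\in Q_{\min}$; by the convex-geometric reading recorded after Definition~\ref{defPerfectionAndEutaxy} (a Gordan/Stiemke alternative), this is exactly $\tau\in\mathrm{conv}\{\epsilon_x\}_{x\in Q_{\min}}$, and taking the minimal vectors indexing the face whose relative interior contains $\tau$ gives a nonempty eutactic subset $X_0$. The second-order information, extracted from extremality through Lemma~\ref{lemEstimationNeighborhood}, is that for every $H$ in the cone $C=\{H\in\gp_Q\mid Q(x,Hx)\ge0\ \forall x\in Q_{\min}\}$ some minimal vector satisfies $Hx=0$. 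The task is to combine these into weak perfection of a suitable subset, and here lies the main obstacle, because eutaxy pulls toward small subsets (it asks that $\tau$ be interior to their hull) while weak perfection pulls toward large ones (it asks that the $\epsilon_x$ kill no common direction outside $\{H:Hx=0\}$). I would analyse the cone $C$ through its set of implicit equalities $X_1=\{x\in Q_{\min}:Q(x,Hx)=0\ \forall H\in C\}\supseteq X_0$, and test weak perfection against a putative violator $H_0$ (with $Q(x,H_0x)=0$ on the subset but $H_0x_1\neq0$): using the eutaxy relation to normalise the constant to $0$ and a strictly-positive-off-the-subset certificate to confine the perturbations inside $C$, the second-order condition forces killed minimal vectors, and a pigeonhole over the perturbation parameter then forces $H_0$ to kill vectors it was assumed not to. Making this selection argument close cleanly, possibly by induction on $\dim\gp_Q$ or on $\card{Q_{\min}}$ as the violating directions are peeled away, is the step I expect to require the most care.
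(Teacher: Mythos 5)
Your first two paragraphs are sound and are essentially the paper's own argument: both directions of (i) and the easy direction of (ii) are handled exactly as in the paper, via the two-sided estimate of Lemma~\ref{lemEstimationNeighborhood} together with its equality case $Hx=0$, Lemma~\ref{lemMinimalVectorsInNeighborhood} to restrict attention to $Q_{\min}$, and a compactness argument on the unit sphere of $\gp_Q$ (respectively, in the degenerate directions of (ii), on the sphere of the quotient $\gp_Q/\mathcal{A}$ with $\mathcal{A}=\{H\in\gp_Q\mid Hx=0\ \forall x\in X\}$, which is the clean way to phrase the uniformity you gesture at with your ``$O(\mathrm{dist}^2)$'' remark).

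The hard direction of (ii) is, however, a genuine gap, and your sketch as stated is unlikely to close. The two certificates you extract are correct --- first order: $0$ lies in the convex hull of $\{\epsilon_x\}_{x\in Q_{\min}}$ restricted to $\gp_Q$, so a face selection yields an eutactic subset $X_0$; second order: every $H$ in the cone $C=\{H\in\gp_Q\mid Q(x,Hx)\ge0\ \forall x\in Q_{\min}\}$ kills some minimal vector --- but they do not combine pointwise: the face subset $X_0$ carries no weak-perfection information, the implicit-equality set $X_1$ of $C$ need not be eutactic, and in the second-order condition the killed vector varies with $H$, so a pigeonhole over a one-parameter perturbation does not produce a single subset working for all violators simultaneously. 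The paper's device, which your plan lacks, is to \emph{change the base form} rather than peel directions inside the fixed configuration: given a violator $H$ (all slopes $\ge0$, $Hx_0\ne0$, $\norm{H}_Q$ small), extremality together with the strict inequality in Lemma~\ref{lemEstimationNeighborhood} forces $\gamma(Q_H)=\gamma(Q)$ and $(Q_H)_{\min}=\{x\in Q_{\min}\mid Hx=0\}\subsetneq Q_{\min}$; moreover $Q_H$ attains the locally maximal value, hence is itself extreme, so induction on $\card{Q_{\min}}$ yields a weakly perfect eutactic subset $X\subseteq(Q_H)_{\min}$. The remaining step --- transferring these properties from $Q_H$ back to $Q$ --- is not formal and has no counterpart in your sketch: one checks $Q_H(x,Jx)=Q(x,Jx)$ for $x\in X$ (using $Hx=0$) and the adjoint identity $J^*_{Q_H}x=\exp(-H)J^*_Qx$, so that the weak-perfection conclusion $Jx+J^*_{Q_H}x=0$ for $Q_H$ becomes $\bigl(\id_V+\exp(-H)\bigr)Jx=0$, and the positive \pf{Q}-definiteness of $\id_V+\exp(-H)$ then gives $Jx=0$ as required for $Q$. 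I would recommend abandoning the cone-analysis plan at this point and adopting that induction.
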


Before giving the proof, we reformulate the conditions of perfection and eutaxy:

\begin{Lemma}\label{lemPerfectionAndEutaxy}
  Let $\quartet{V}{\Q}{Z}{\G}$ be a Voronoi space, and
  let $X$ be a nonempty finite subset of $V$.
  \begin{enumerate}[(i)]
    \item $(Q,X)$ is perfect and eutactic if and only if,
      for each $H\in\gp_Q$, one of the two following conditions holds:
      \begin{enumerate}[(a)]
        \item $H=0$;
        \item there exists $x\in X$ such that $Q(x,Hx)<0$.
      \end{enumerate}
    \item $(Q,X)$ is weakly perfect and eutactic if and only if, for each $H\in\gp_Q$,
      one of the two following conditions holds:
      \begin{enumerate}[(a)]
        \item $Hx=0$ for every $x\in X$;
        \item there exists $x\in X$ such that $Q(x,Hx)<0$.
      \end{enumerate}
  \end{enumerate}
\end{Lemma}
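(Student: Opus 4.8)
The plan is to prove both equivalences at once by reducing everything, via the reformulations recorded just after Definition~\ref{defPerfectionAndEutaxy}, to statements about \pf{Q}-selfadjoint elements $H\in\gp_Q$. First I would record the two observations that make these \pf{Q}-selfadjoint versions so rigid: since $c\,\id_V\in\g$ forces $c=0$ (Definition~\ref{defGroup}), an element $H\in\gp_Q$ equal to $c\,\id_V$ must be $0$; and since $H^*_Q=H$ for $H\in\gp_Q$, the weak-perfection conclusion $Hx+H^*_Qx=2cx$ collapses to $Hx=cx$. With this in place the target conditions are exactly the two displayed implications, and the negation of condition~(i) (resp.~(ii)) for a given $H$ is precisely ``$H\neq0$ (resp.\ $Hx\neq0$ for some $x$) together with $Q(x,Hx)\ge0$ for all $x\in X$''.

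The single idea driving both directions is the following consequence of eutaxy: if $H\in\gp_Q$ satisfies $Q(x,Hx)\ge0$ for all $x\in X$, then in fact $Q(x,Hx)=0$ for all $x\in X$. Indeed, were some $Q(x,Hx)>0$, eutaxy would produce a $y\in X$ with $Q(y,Hy)<0$, contradicting the assumption. With this in hand the implication ``$\Rightarrow$'' in each part is immediate: assuming $(Q,X)$ perfect (resp.\ weakly perfect) and eutactic, any $H\in\gp_Q$ with $Q(x,Hx)\ge0$ everywhere has $Q(x,Hx)=0=0\cdot Q(x)$ for all $x$, and perfection (resp.\ weak perfection) with $c=0$ gives $H=0$ (resp.\ $Hx=0$ for all $x$), which is alternative~(a).

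For the converse ``$\Leftarrow$'' I would assume the displayed dichotomy holds for every $H\in\gp_Q$ and recover eutaxy, perfection, and weak perfection separately. Eutaxy follows because the dichotomy shows that whenever $Q(y,Hy)\ge0$ for all $y\in X$ one already has $H=0$ (resp.\ $Hy=0$ for all $y$), hence $Q(y,Hy)=0$ for all $y$; so if some $Q(x,Hx)>0$ then the case of all values being $\ge0$ is excluded and some $y$ must satisfy $Q(y,Hy)<0$. For perfection and weak perfection, suppose $Q(x,Hx)=c\,Q(x)$ for all $x\in X$; since the sign of $Q(x,Hx)$ equals that of $c$, either $c\ge0$ and all these values are $\ge0$, or $c<0$ and, replacing $H$ by $-H\in\gp_Q$, all values $Q(x,-Hx)$ are $\ge0$. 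Applying the dichotomy to whichever of $H,-H$ has nonnegative values yields $H=0$ in case~(i), and $Hx=0$ for all $x$ in case~(ii); the latter then forces $c=0$ and hence $Hx=cx$, exactly the required conclusions.

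The main obstacle to watch is the scalar $c$ in the perfection hypotheses combined with the fact that $\id_V\notin\gp_Q$: one cannot simply subtract $c\,\id_V$ to normalize, so the argument must instead exploit that $\gp_Q$ is a linear space closed under $H\mapsto -H$, using the sign of $c$ to select the right representative before invoking the dichotomy. Once this bookkeeping is arranged, the proof is just a short chain of sign comparisons with no further analytic input.
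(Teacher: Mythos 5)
Your proposal is correct, and in the forward direction it is exactly the paper's argument: eutaxy forces $Q(x,Hx)=0$ for all $x\in X$ unless condition~(b) holds, and then perfection (resp.\ weak perfection) applied with $c=0$ yields condition~(a). You additionally write out the converse direction, which the paper's proof leaves implicit; your handling of the scalar $c$ there --- using $Q(x)>0$ to read off the sign of $c$, passing to $-H\in\gp_Q$ when $c<0$, and noting that $H=0$ (resp.\ $Hx=0$ for all $x$) forces $c=0$ so that the conclusions $H=c\,\id_V$ and $Hx=cx$ follow --- is sound and fills that routine gap correctly.
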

\begin{proof}
  Let $H\in\gp_Q$.
  The eutaxy of $(Q,X)$ implies that
  Condition~(b) holds unless
  we have $Q(x,Hx)=0$ for every $x\in X$.
  In the last case, the perfection, respectively the weak perfection,
  of $(Q,X)$
  implies Condition~(a).
\end{proof}

\begin{proof}[Proof of the Theorem]
  First, recall that a basis of neighborhoods of~$Q\in\Q$ is given by
  $\bigl\{Q_H \bigm| H\in\nobreak\gp_Q$ and $\norm{H}_Q<\epsilon\bigr\}$,
  with $\epsilon>0$ (Proposition~\ref{propAdjoinization}).

  (i, $\Leftarrow$) Suppose that $Q$ is perfect and eutactic.
  Consider the following function on the sphere $\Se(\gp_Q)$ of
  elements of $\gp_Q$ whose \pf{Q}-norm is equal to~$1$:
  \begin{equation*}
    m(H) = \min_{x\in Q_{\min}} Q(x,Hx), \quad H\in\Se(\gp_Q).
  \end{equation*}
  The perfection and eutaxy of $Q$ imply that $m(H)<0$
  for each $H\in\Se(\gp_Q)$ (Lemma~\ref{lemPerfectionAndEutaxy}).
  Then, the continuity of~$m$ and the compactness of $\Se(\gp_Q)$ imply that
  there exists a~$\delta>0$ such that $m(H)<-\delta$ for every $H\in\Se(\gp_Q)$.
  We infer that
  \begin{equation*}
   \min_{x\in Q_{\min}} Q(x,Hx) < -\delta\norm{H}_Q, \quad \forall H\in\pp_Q\smz.
  \end{equation*}
  Now, let $H\in\gp_Q\smz$ with $\norm{H}_Q<1$, and let $x\in Q_{\min}$
  such that $Q(x,Hx)<-\delta\norm{H}_Q$.
  By Lemma~\ref{lemEstimationNeighborhood},
  \begin{equation*}
    Q_{H}(x) \le Q(x) + Q(x,Hx) + Q(Hx)
    < Q(x) - \delta\norm{H}_Q + \norm{H}_Q^2 Q(x).
  \end{equation*}
  Therefore, when $\norm{H}_Q<\min\bigl(1,\delta/Q(x)\bigr)$,
  we have $\gamma(Q_H)\le Q_{H}(x)<Q(x)=\gamma(Q)$.
  So, $Q$ is strictly extreme.

  (ii, $\Leftarrow$)
  The pattern of the proof is similar as the one for (i, $\Leftarrow$) above.
  Suppose that there exists $X\subseteq Q_{\min}$ such that $(Q,X)$
  is weakly perfect and eutactic. Let
  $\mathcal{A} = \{ H\in\nobreak\gp_Q \mid Hx=0,\ \rbreak\forall x\in\nobreak X\}$.
  We have a norm $\norm{\cdot}_Q$ on $\gp_Q/\mathcal{A}$ defined by
  \begin{equation*}
    \norm{H+\mathcal{A}}_Q = \min_{J\in\mathcal{A}} \norm{H+J}_Q,
    \quad H\in\gp_Q.
  \end{equation*}
  As $(H+J)x=Hx$ for every $J\in\mathcal{A}$ and every $x\in X$, we have
  \begin{equation*}
    \bigl|Q(x,Hx)\bigr|\le\norm{H+\mathcal{A}}_Q Q(x)
    \quad\text{and}\quad
    Q(Hx)\le\norm{H+\mathcal{A}}_Q^2 Q(x),
    \quad\forall x\in X.
  \end{equation*}
  We define now a function~$m$ on the sphere
  $\Se(\gp_Q/\mathcal{A})$ consisting of classes of norm~$1$ by
  \begin{equation*}
    m(H+\mathcal{A}) = \min_{x\in X} Q(x,Hx),
    \quad H\in\gp_Q,\ \norm{H+\mathcal{A}}_Q=1.
  \end{equation*}
  Now we use the weak perfection and the eutaxy of $(Q,X)$ :
  Lemma~\ref{lemPerfectionAndEutaxy} implies that $m(H+\mathcal{A})$
  is negative on $\Se(\gp_Q/\mathcal{A})$;
  therefore, by compactness, there exists a~$\delta>0$ such that
  $m\bigl(H+\mathcal{A}\bigr)<-\delta$
  for every $H+\mathcal{A}\in\Se(\gp_Q/\mathcal{A})$. We infer that
  \begin{equation*}
   \min_{x\in Q_{\min}} Q(x,Hx) \le -\delta\norm{H+\mathcal{A}}_Q, \quad \forall H\in\pp_Q.
  \end{equation*}
  Now, let $H\in\gp_Q$ with $\norm{H}_Q<1$,
  and let $x\in X$ such that $Q(x,Hx) \le -\delta\*\norm{H+\mathcal{A}}_Q$.
  By Lemma~\ref{lemEstimationNeighborhood}, we have
  \begin{equation*}
    Q_{H}(x) \le Q(x) + Q(x,Hx) + Q(Hx) 
    \le Q(x) - \delta\norm{H+\mathcal{A}}_Q + \norm{H+\mathcal{A}}_Q^2 Q(x).
  \end{equation*}
  Therefore, when $\norm{H+\mathcal{A}}_Q<\min\bigl(1,\delta/Q(x)\bigr)$,
  we have $\gamma(Q_H)\le Q_{H}(x)\le Q(x)=\gamma(Q)$. So, $Q$ is extreme.

  (i, $\Rightarrow$) Suppose that $Q$ is strictly extreme.
  Let $H\in\gp_Q\smz$ with $\norm{H}_Q\le1$.
  By Lemma~\ref{lemEstimationNeighborhood}, we have, for each $x\in Q_{\min}$,
  \begin{equation*}
    Q_{tH}(x) \ge Q(x) + t\,Q(x,Hx), \quad
    \forall t,\ 0<t<1.
  \end{equation*}
  Since $Q$ is strictly extreme, and $(Q_{tH})_{\min}\subseteq Q_{\min}$ 
  for $t$ sufficiently small (Lemma~\ref{lemMinimalVectorsInNeighborhood}),
  we have $Q_{tH}(x)<Q(x)$ for some $x\in Q_{\min}$ and some $t>0$.
  This implies $Q(x,Hx)<0$ for some $x\in Q_{\min}$, which means,
  by Lemma~\ref{lemPerfectionAndEutaxy}, that $(Q,Q_{\min})$ is both perfect and eutactic.

  (ii, $\Rightarrow$) We proceed by induction on the cardinal of $Q_{\min}$.
  Suppose that $Q$ is extreme.
  Let $0<\delta\le1$ such that,
  for every $H\in\gp_Q$ with $\norm{H}_Q\le\delta$, we have
  $(Q_{H})_{\min}\subseteq Q_{\min}$
  (by Lemma~\ref{lemMinimalVectorsInNeighborhood})
  and $\gamma(Q_H)\le\gamma(Q)$ (by the assumption of extremality).
  Suppose that $(Q,Q_{\min})$ is \emph{not} weakly perfect and eutactic (otherwise we are done).
  Then, by Lemma~\ref{lemPerfectionAndEutaxy}, there exists an $H\in\gp_Q$
  and an $x_0\in Q_{\min}$,
  such that $Q(x,Hx)\ge0$ for every $x\in Q_{\min}$ and $Hx_0\ne0$.
  Moreover, we can choose $H$ such that $\norm{H}_Q<\delta$.
  By Lemma~\ref{lemEstimationNeighborhood}, we have,
  \begin{equation*}
    Q_{H}(x) \ge Q(x) + Q(x,Hx),\ \forall x\in Q_{\min},
    \text{with a strict inequality when $Hx\ne0$.}
  \end{equation*}
  Then the extremality of~$Q$ and the choice of $\delta$
  imply that $\gamma(Q_H)=\gamma(Q)$, with
  \begin{equation*}
    (Q_H)_{\min} = \{x\in Q_{\min} \mid Hx=0\},
  \end{equation*}
  and $\bigl|(Q_H)_{\min}\bigr|<\bigl|Q_{\min}\bigr|$ (since $Hx_0\ne0$),
  and $Q_H$ is itself extreme.
  By induction, there exists a subset $X$ of $(Q_H)_{\min}$
  such that $(Q_H,X)$ is weakly perfect and eutactic.
  It remains to show that $(Q,X)$ is also weakly perfect and eutactic.

  On the one hand, we have, for every $x\in X$ and every $J\in\g$,
  using $Hx=0$,
  \begin{equation*}
    Q_H(x,Jx) = Q(\exp(H)x,Jx) = Q(x,Jx).
  \end{equation*}
  On the other hand, we have for every $x\in X$ and every $y\in V$,
  \begin{multline*}
    Q(J^*_Qx,y) = Q(x,Jy) = Q_H(\exp(-H)x,Jy) \\
    = Q_H(x,Jy) = Q_H(J^*_{Q_H}x,y) = Q(\exp(H)J^*_{Q_H}x,y),
  \end{multline*}
  so $J^*_{Q_H}x=\exp(-H)J^*_Qx$ for every $x\in X$.

  Let us now show that $(Q,X)$ is weakly perfect and eutactic
  using Lemma~\ref{lemPerfectionAndEutaxy}.
  Let $J\in\gp_Q$.
  If there exists $x\in X$ such that $Q_H(x,Jx)<0$, then,
  since $Q_H(x,Jx)=  Q(x,Jx)$ for every $x\in X$,
  there exists $x\in X$ such that $Q(x,Jx)<0$.
  Otherwise, since $(Q_H,X)$ weakly perfect and eutactic,
  by Lemma~\ref{lemPerfectionAndEutaxy}, we have $Jx+J^*_{Q_H}x=0$ for every $x\in X$.
  But since,$J^*_{Q_H}x=\exp(-H)J^*_Qx$, we have $(1+\exp (-H))\,Jx=0$.
  Yet $(1+\exp(-H))$ is an isomorphism, because it is positive \pf{Q}-definite.
  Therefore $Jx = 0$ for every $x\in X$.
\end{proof}

\subsection{More about perfection}

It is known that, in classic Voronoi spaces and many other
interesting Voronoi spaces,
extremality implies strict extremality.
This is not true in general;
but the following proposition gives a simple property
of weakly perfect quadratic form,
which turns to be an efficient tool to
rule out the existence of extreme but not strictly extreme quadratic forms
in many Voronoi spaces.

\begin{Proposition}\label{propWeaklyPerfectOnly}
  Let $\quartet{V}{\Q}{Z}{\G}$ be a Voronoi space,
  and let $Q\in\Q$ be a quadratic form
  and $X\subseteq V$ be a nonempty finite subset.
  If $(Q,X)$ is weakly perfect but not perfect,
  then there exists a proper \pf{\G^0}-invariant subspace $U$ of $V$ such that $X\subseteq U$.
\end{Proposition}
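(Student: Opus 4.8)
The plan is to extract from the failure of perfection a single \pf{Q}-selfadjoint operator that acts as a scalar on $X$ but not on all of $V$, and then to \emph{straighten} its eigenspace into a genuine \pf{\G^0}-invariant subspace. Since $(Q,X)$ is weakly perfect but not perfect, the $\gp_Q$-formulation of these notions furnishes an $H_0\in\gp_Q$ and a $c_0\in\RR$ with $Q(x,H_0x)=c_0Q(x)$ for all $x\in X$ while $H_0\neq c_0\,\id_V$; weak perfection then forces $H_0x=c_0x$ for all $x\in X$. I set $A=H_0-c_0\,\id_V$, a \pf{Q}-selfadjoint operator with $Ax=0$ for every $x\in X$ and $A\neq0$, so that $X$ lies in the proper subspace $V_0:=\ker A$ (the \pf{c_0}-eigenspace of $H_0$).

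The conceptual difficulty is that $V_0$ need not be \pf{\G^0}-invariant. To overcome this I would use two compatible gradings. Since $A$ is \pf{Q}-selfadjoint, $V$ decomposes \pf{Q}-orthogonally as $V=\bigoplus_\nu V_\nu$ into eigenspaces of $A$, with $X\subseteq V_0$. Since $\ad(H_0)$ is selfadjoint for the scalar product $\scal{X}{Y}=\tr(X^*_QY)$ on $\g$ (immediate from $\scal{X}{[Y,Z]}=\scal{[Y^*_Q,X]}{Z}$ together with the \pf{Q}-selfadjointness of $H_0$), it is diagonalizable, giving $\g=\bigoplus_\lambda\g_\lambda$ with $\g_\lambda=\{Y\in\g\mid[H_0,Y]=\lambda Y\}$. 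These gradings are compatible: $[\g_\lambda,\g_\mu]\subseteq\g_{\lambda+\mu}$, and, because $[A,Y]=[H_0,Y]$, a direct computation gives $\g_\lambda V_\nu\subseteq V_{\lambda+\nu}$.

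The main obstacle is the key intermediate claim that $\g_\lambda X=0$ for every $\lambda\neq0$; equivalently $\g X\subseteq V_0$. I would prove $\g X\subseteq V_0$ using the splitting $\g=\gk_Q\oplus\gp_Q$. For $Y\in\gk_Q$ we have $[A,Y]=[H_0,Y]\in\gp_Q$ and, using $A^*_Q=A$ and $Ax=0$, $Q(x,[A,Y]x)=Q(Ax,Yx)-Q(x,YAx)=0$ for all $x\in X$; weak perfection, applied to $[A,Y]\in\gp_Q$ with constant $0$, then yields $[A,Y]x=0$, whence $A(Yx)=[A,Y]x+YAx=0$, i.e.\ $Yx\in V_0$. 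For $Y\in\gp_Q$ we have $[A,Y]\in\gk_Q$, so by the previous case $[A,Y]x\in\gk_Q X\subseteq V_0=\ker A$, and therefore $A^2(Yx)=A([A,Y]x)=0$; as $A$ is diagonalizable, $\ker A^2=\ker A$, so again $Yx\in V_0$. Thus $\g X\subseteq V_0$, and the grading forces $\g_\lambda X\subseteq V_\lambda\cap V_0=0$ for $\lambda\neq0$.

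It then remains to assemble the invariant subspace. Let $M$ be the \pf{\g_0}-submodule of $V$ generated by $X$; since $X\subseteq V_0$ and $\g_0V_0\subseteq V_0$, we have $M\subseteq V_0$, so $M$ contains $X$ and is proper. I claim $M$ is already \pf{\g}-invariant, for which it suffices to check $\g_\lambda M=0$ whenever $\lambda\neq0$. This follows by induction along the generating filtration of $M$: if $\g_\lambda$ annihilates a stage $M_k$, then for $Y\in\g_\lambda$ and $Y'\in\g_0$ one has $Y(Y'v)=Y'(Yv)+[Y,Y']v$ with $[Y,Y']\in[\g_\lambda,\g_0]\subseteq\g_\lambda$, so both terms vanish on $M_k$; the base case is $\g_\lambda X=0$. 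Hence $\g M=\g_0M\subseteq M$, so $M$ is \pf{\g}-invariant and therefore \pf{\G^0}-invariant, $\G^0$ being connected with Lie algebra $\g$. This exhibits the desired proper \pf{\G^0}-invariant subspace $U=M$ containing $X$.
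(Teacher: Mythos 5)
Your proof is correct, but its global architecture differs from the paper's. The paper works with the full annihilator $\mathcal{A}=\{H\in\gp_Q\oplus\RR\,\id_V\mid Hx=0,\ \forall x\in X\}$ (nonzero precisely because weak perfection without perfection supplies your $A=H_0-c_0\,\id_V$) and takes $U=\bigcap_{J\in\mathcal{A}}\ker J$; this smaller subspace is shown to be \pf{\g}-invariant directly, by the same two bracket computations you use but run at arbitrary points $u\in U$: for $H\in\gk_Q$ one checks $Q([J,H]x,x)=0$ on $X$ and invokes weak perfection to conclude $[J,H]\in\mathcal{A}$, whence $JHu=[J,H]u+HJu=0$; for $H\in\gp_Q$ one computes $Q([J,H]u,[J,H]u)=Q(Hu,J[J,H]u)=0$ using the first part, the positive definiteness of $Q$ playing exactly the role that $\ker A^2=\ker A$ plays in your argument. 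You instead keep a single witness $A$, whose kernel $V_0$ contains $X$ but need not be invariant, and compensate with extra structure: the eigenspace decompositions of $A$ on $V$ and of $\ad(H_0)$ on $\g$ (the selfadjointness of $\ad(H_0)$ for $\tr(X^*_QY)$ is indeed available, by the paper's trace identity applied to the Euclidean space $(V,Q)$), the pointwise claim $\g X\subseteq V_0$ (your two cases are the paper's parts (a) and (b) localized to $X$, with weak perfection used only at points of $X$), and the generation of a \pf{\g_0}-module $M\subseteq V_0$ satisfying $\g_\lambda M=0$ for $\lambda\neq0$, proved by induction along the generating filtration. Both assemblies are sound; the paper's is shorter and avoids spectral theory for $\ad(H_0)$, since intersecting over all of $\mathcal{A}$ makes the candidate subspace small enough that invariance can be verified in one pass, whereas yours isolates the fact that a single defect $H_0$ suffices and confines every appeal to weak perfection to the set $X$ itself, at the cost of the grading-and-generation machinery.
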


\begin{proof}
  Let $Q\in\Q$ and $X\subseteq V$ as in the hypothesis of the theorem.
  Consider the following subspace of $\gp_Q\oplus\RR\,\id_V$:
  \begin{equation*}
    \mathcal{A} = \{ H\in\gp_Q\oplus\RR\,\id_V \mid Hx=0,\ \forall x\in X\}.
  \end{equation*}
  Since $(Q,X)$ is weakly perfect but not perfect,
  $\mathcal{A}$ is not reduced to $\{0\}$.
  We define $U$ as
  \begin{equation*}
    U = \bigcap_{H\in\mathcal{A}}\ker H,
  \end{equation*}
   which is a proper subspace of $V$ containing~$X$.
   It remains to show that it is \pf{\G^0}-invariant, or, equivalently, \pf{\g}-invariant.

  (a) Let us first show that $U$ is invariant under the action of~$\gk_Q$.
  Let $H\in\gk_Q$. For any $J\in\mathcal{A}$ and any $x\in X$, we have
  \begin{equation*}
    Q\bigl([J,H]x,x\bigr)
    = Q\bigl(JHx-HJx,x\bigr)
    \stackrel{Jx=0} = Q\bigl(JHx,x\bigr)
    \stackrel{J=J^*_Q} = Q\bigl(Hx,Jx\bigr)
    \stackrel{Jx=0} = 0.
  \end{equation*}
  Since $[J,H]\in\gp_Q$ and since $(Q,X)$ is weakly perfect, we have $[J,H]\in\mathcal{A}$.
  Then, for any $u\in U$, we have
  \begin{equation*}
    JHu = [J,H]u + HJu = 0,
  \end{equation*}
  so $Hu\in\ker J$ for any $J\in\mathcal{A}$, therefore $Hu\in U$ by definition of~$U$.

  (b) Let us now show that the elements of $\gp_Q$ preserve~$U$;
  since $\g=\gk_Q\oplus\gp_Q$, this will achieve the proof.
  Let $H\in\gp_Q$. For any $J\in\mathcal{A}$, and any $u\in U$, we have
  \begin{multline*}
    Q\bigl([J,H]u,[J,H]u\bigr)
    = Q\bigl(JHu-HJu,[J,H]u\bigr)\\
    \stackrel{Ju=0} = Q\bigl(JHu,[J,H]u\bigr)
    \stackrel{J=J^*_Q} = Q\bigl(Hu,J[J,H]u\bigr).
  \end{multline*}
  Because we have $[J,H]\in\gk_Q$ and the elements of $\gk_Q$ preserve $U$
  by the part~(a) of the proof,
  we have $[J,H]u\subseteq U$, and therefore $J[J,H]u=0$.
  We infer that $Q\bigl([J,H]u,[J,H]u\bigr)=0$, that is $[J,H]u=0$. Then
  \begin{equation*}
    JHu = [J,H]u + HJu = 0,
  \end{equation*}
  so $Hu\in\ker J$ for any $J\in\mathcal{A}$,
  therefore $Hu\in U$ by definition of~$U$.
\end{proof}

The last proposition above implies that, in a Voronoi space
$\quartet{V}{\Q}{Z}{\G}$
where an orbit of an element of $Z$ under the action of $\G^0$
generates the space $V$, 
weak perfection implies perfection,
and therefore, by Theorem~\ref{thmVoronoi},
extremality implies strict extremality.
This is the case in particular for
\begin{itemize}
  \item the classic Voronoi space of rank~$n$
    (Example~\ref{exClassical} of Paragraph~\ref{parExamples}).
    Here, we have $\G^0=\SL(n,\RR)$, which has two orbits on $V$,
    namely $\{0\}$ and $V\smz$;
  \item the $m$th exterior power
    $\quartet{V^{\wedge m}}{\Q^{\wedge m}}{Z^{\wedge m}}{\G^{\wedge m}}$
    of the classic Voronoi space
    of rank~$n$, with $m\le n/2$
    (Example~\ref{exExterior} of Paragraph~\ref{parExamples}).
    Here, the orbit of an element of $Z^{\wedge m}$ under the action
    of $\G^0$ is equal to
    \begin{equation*}
      \{ x_1\wedge\dots\wedge x_m \mid
      \text{$x_1,\dots,x_m$ are linearly independent elements of $\RR^n$} \},
    \end{equation*}
    and it spans $V^{\wedge m}$;
  \item the space $\quartet{V}{\Q}{Z}{\G}$ of Humbert forms of rank~$n$
    over a number field
    (Example~\ref{exHumbert} of Paragraph~\ref{parExamples}).
    Here, the orbit of an element of $Z$ under the action
    of $\G^0$ is equal to
    \begin{multline*}
      \{ x_1\otimes\dots\otimes x_{r+2s} \mid
      \text{$x_i\in\RR^n\smz$ for $1\le i\le r$,} \\
      \text{$x_{r+i}\in\CC^n\smz$ and $x_{r+s+i}=\overline{x_{r+i}}$
        for $1\le i\le s$} \},
    \end{multline*}
    and this set spans $V$.
  \item the ``symplectic'' Voronoi space of rank~$2m$:
    Let $\sigma=\left(\begin{smallmatrix}0 & I_m\\-I_m & 0\end{smallmatrix}\right)$,
    and consider the Voronoi space of \pf{\sigma}-isodual
    quadratic form of the classic Voronoi space of rank~$2m$,
    as constructed in Example~\ref{exOrbit}(b)
    of Paragraph~\ref{parExamples}.
    Here, we have $\G^0=\Sp(2m)^0$, which has two orbits on $V$,
    namely $\{0\}$ and $V\smz$.
\end{itemize}
  
As another example, let $\quartet{V}{\Q}{Z}{\G}$ be a classic Voronoi space,
and let $\quartet{\tilde{V}}{\tilde{\Q}}{\tilde{Z}}{\tildeG}$
be the Voronoi space constructed
in Example~\ref{exDuality} of Paragraph~\ref{parExamples}
(with $Q^\alpha=Q^{-1}$).
For $Q\in\Q$ and $c\in\RR^*$, let $\tilde{Q}_c\in\tilde{\Q}$
such that $\tilde{Q}_c((x_1,\nobreak x_2))=Q(cx_1)+Q^{-1}(c^{-1}x_2)$.
If $c$ is chosen such that $c^2\gamma(Q)<c^{-2}\gamma(Q^{-1})$, we have
$(\tilde{Q}_c)_{\min}=Q_{\min}\times\{0\}$, and $\tilde{Q}_c$ is
weakly perfect if and only if $Q$ is perfect; but $\tilde{Q}_c$
is not perfect. However $\tilde{Q}_c$ is not eutactic either, and
it is easy to show that all examples of weakly perfect but not
perfect quadratic forms in this space are of this type.
Therefore, in this space, extremality implies strict extremality.

\bigskip
Another question is the finiteness (up to equivalence) of perfect forms
as it is the case for the classic Voronoi space.
We conjecture that the set of perfect forms in a 
Voronoi space is closed and discrete,
but we have not been able to prove it, except
under a very restrictive hypothesis, namely when the
Lie algebra $\g\oplus\RR\,\id_V$ is also an associative algebra.
Since the set of Voronoi spaces given in Paragraph~\ref{parExamples} for
which that condition is satisfied
is a strict subset of the set of Voronoi spaces for which
finiteness of perfect forms is already known, we will not give the proof.

Also, an algorithm which would allow to enumerate theoretically
all perfect form, as the one of Voronoi~\cite{Voronoi}, \cite[Chap.~7]{MartinetBook},
which has been extended in various situations
\cite[Chap.~13]{MartinetBook}, \cite{BavardSymplectique}, \cite{BavardLorentzien},
would be a subject of high interest.
However, a study of such an algorithm, is beyond the scope of this paper.

\section{Alternative Voronoi spaces}\label{sectAQV}

The classic problems related to the Hermite function can be
equivalently formulated
in terms of quadratic forms or of Euclidean lattices.
While the former
formulation is more convenient for the proof of the Voronoi
characterization of extremality,
the latter one is more appropriate for expressing
the relation between spherical designs and extremality.

The same situation holds in our frame of Voronoi spaces.
Hitherto, we have worked with the ``quadratic forms'' formulation,
in which the subset $Z$ of $V$ is fixed
and the quadratic forms $Q$ varies.
But in order to define designs,
we need to work with the dual formulation,
where the quadratic form $Q$ is fixed and the corresponding subset $Z$
of $V$ varies;
we name \emph{alternative Voronoi space} this dual concept of Voronoi space.
The Voronoi characterization of extremality
is reformulated in terms of
alternative Voronoi space in Theorem~\ref{thmVoronoiBis}.

\emph{Regular alternative Voronoi spaces}
as defined in Paragraph~\ref{parRegularAQV}
are a particular type of alternative Voronoi spaces
for which designs may be defined in the most natural way.
Finally, in the last paragraph of this section (Paragraph~\ref{parExamplesBis}),
we give the dual formulation
of all the examples of Voronoi spaces given in Paragraph~\ref{parExamples},
and we add another example of alternative Voronoi space.

\subsection{Alternative Voronoi space}

\begin{Definition}\label{defAlternativeVoronoiSpace}
  \textrm{(Compare with Definition~\ref{defVoronoiSpace}.)}
  An \emph{alternative Voronoi space} is the data of
  \begin{enumerate}[(a)]
    \item a finite-dimensional real vector space $V$,
    \item a positive definite quadratic form $Q$ of $V$,
    \item a closed (for the topology of simple convergence) set $\Z$
    of closed discrete subsets $Z$ of $V$ not containing~$0$ and spanning~$V$,
    \item a closed linear Lie group $\G<\GL(V)$,
  \end{enumerate}
  satisfying the following properties:
  \begin{enumerate}[(i)]
    \item For $Z\in\Z$ and $c>0$, we have $cZ\in\Z$ only if $c=1$;
    \item for all $g\in\G$, we have $g^*_Q\in\G$ and $g^*_Qg\in\G^0$;
    \item for all $g\in\G$ and $Z\in\Z$, we have $gZ\in\Z$,
      and $\G$ is transitive on $\Z$.
  \end{enumerate}
\end{Definition}
The supplementary requirement $g^*_Qg\in\G^0$ in condition~(ii)
is to be compared to Lemma~\ref{lemConnection}; it replaces the
condition of connectedness of $\Q$ in the definition of
Voronoi space.

\begin{Definition}\label{defGroupBis}
  \textrm{(Compare with Definition~\ref{defGroup}.)}
  Let $\quartet{V}{Q}{\Z}{\G}$ be an alternative Voronoi space.
  We define the following objects:
  \begin{itemize}
    \item $\GK=\{g\in\G \mid g^*_Q=g^{-1}\}$, the group of \pf{Q}-selfadjoint
      elements of~$\G$,
      which is the \emph{compact group} of $\quartet{V}{Q}{\Z}{\G}$;
    \item $\g$, the Lie algebra of~$\G$;
    \item $\gk = \{H\in\g\mid H+H^*_Q=0\}$, the subalgebra of \pf{Q}-antiselfadjoint
      elements of~$\g$, or the Lie algebra of~$\GK$;
    \item $\gp = \{H\in\g\mid H-H^*_Q=0\}$, the set of \pf{Q}-selfadjoint
      elements of~$\g_Q$;
    \item $\G^0$ and $\GK^0$, the connected component of the identity
      of $\G$ and $\GK$ respectively.
  \end{itemize}
  Note that we have $\g=\gk\oplus\gp$.
\end{Definition}

\begin{Definition}\label{defLayerAQV}
  \textrm{(Compare with Definition~\ref{defLayerQV}.)}
  Let $\quartet{V}{Q}{\Z}{\G}$ be an alternative Voronoi space.
  \begin{enumerate}[(i)]
    \item The \emph{Hermite function} associated to $\quartet{V}{Q}{\Z}{\G}$
    is the function $\gamma$ defined on $\GK\backslash\Z$ by
      \begin{equation*}
        \gamma(\GK Z) = \min_{x\in Z} Q(x), \quad Z\in\Z.
      \end{equation*}
    \item For $r>0$, the \emph{layer} of square radius~$r$ of $Z\in\Z$ is the set
      \begin{equation*}
        Z_{r} = \{x\in Z \mid Q(x)=r \}.
      \end{equation*}
    \item The set of \emph{minimal vectors} of a set $Z\in\Z$ is
      the layer of minimal radius, that is
      \begin{equation*}
        Z_{\min} = Z_{\gamma(Z)} =\{x\in Z \mid Q(x)=\gamma(Q) \}.
      \end{equation*}
    \item A set $Z\in\Z$ is \emph{extreme},
      respectively \emph{strictly extreme},
      when its class $\GK Z$
      is a local maximum, respectively a strict local maximum, of~$\gamma$.
  \end{enumerate}
\end{Definition}

\begin{Definition}\label{defCompactBis}
  \textrm{(Compare with Definition~\ref{defCompact}.)}
  The alternative Voronoi space $\quartet{V}{Q}{\Z}{\G}$ is \emph{compact}
  when \begin{math}\gamma: \GK\backslash\Z
  \to \mathopen]0,\infty\mathclose[\end{math}
  is bounded and proper.
\end{Definition}

\subsection{Equivalence between Voronoi space and alternative Voronoi space}

We construct standard transformations from Voronoi spaces
to alternative Voronoi spaces and conversely as follows:

  Let $\quartet{V}{\Q}{Z}{\G}$ be a Voronoi space, and let $Q\in\Q$.
  We construct an alternative Voronoi space $\quartet{V}{Q}{\Z}{\G}$ by setting
  \begin{equation*}
    \Z = \{gZ \mid g\in \G\}.
  \end{equation*}
  If $\gamma$ is the Hermite function of $\quartet{V}{\Q}{Z}{\G}$ and
  $\tilde{\gamma}$ the Hermite function of $\quartet{V}{Q}{\Z}{\G}$,
  we have $\tilde{\gamma}(gZ) = \gamma(Q\circ g)$.

Conversely, let $\quartet{V}{Q}{\Z}{\G}$ be an alternative Voronoi space,
and let $Z\in\Z$.
We construct a Voronoi space $\quartet{V}{\Q}{Z}{\G}$ by setting
\begin{equation*}
  \Q = \{Q\circ g \mid g\in \G\}.
\end{equation*}
The property $g^*_Qg\in\G^0$ for $g\in\G$ implies that
$\Q$ is connected.
If $\gamma$ is the Hermite function of $\quartet{V}{Q}{\Z}{\G}$ and
$\tilde{\gamma}$ the Hermite function of $\quartet{V}{\Q}{Z}{\G}$,
we have $\tilde{\gamma}(Q\circ g) = \gamma(gZ)$.

The Voronoi characterization of extremality (Theorem~\ref{thmVoronoi})
can be formulated
in the language of alternative Voronoi space.
First we have to complete the definitions of eutaxy and perfection
(Definition~\ref{defPerfectionAndEutaxy}):
\begin{Definition}\label{defPerfectionAndEutaxyBis}
  Let $\quartet{V}{Q}{\Z}{\G}$ be an alternative Voronoi space, and let $Z\in\Z$.
  Then $Z$ is \emph{eutactic}, \emph{perfect}, etc., when $(Q,Z_{\min})$ is
  eutactic, perfect, etc.
\end{Definition}

\begin{Theorem}[(Voronoi characterization)]\label{thmVoronoiBis}
(Compare with Theorem~\ref{thmVoronoi}.) Let $\quartet{V}{Q}{\Z}{\G}$  be an
alternative Voronoi space.
  \begin{enumerate}[(i)]
    \item A set $Z\in\Z$ is strictly extreme if and only if
      it is perfect and eutactic.
    \item A set $Z\in\Z$ is extreme if and only if there exists
      a nonempty subset of $Z_{\min}$ that is weakly perfect and eutactic.
  \end{enumerate}
\end{Theorem}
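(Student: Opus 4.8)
The plan is to deduce this from the ``quadratic forms'' version, Theorem~\ref{thmVoronoi}, via the standard passage from an alternative Voronoi space to a Voronoi space recalled above. Fix a set $Z\in\Z$ and form the associated Voronoi space $\quartet{V}{\Q}{Z}{\G}$ with $\Q=\{Q\circ g\mid g\in\G\}$. The fixed form $Q$ itself lies in $\Q$ (take $g=\id_V$), and the two Hermite functions are linked by $\tilde\gamma(Q\circ g)=\gamma(gZ)$; in particular $\tilde\gamma(Q)=\gamma(Z)$. I will show that (strict) extremality of $Z$ in the alternative space corresponds to (strict) extremality of the form $Q$ in the associated space, and that the two formulations share the same notions of perfection and eutaxy, so that the statement follows by applying Theorem~\ref{thmVoronoi} to $\quartet{V}{\Q}{Z}{\G}$ at the point $Q$.

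First I would match perfection and eutaxy. The minimal vectors of the form $Q\in\Q$, computed in the associated Voronoi space, form the set $Q_{\min}=\{x\in Z\mid Q(x)=\tilde\gamma(Q)\}$, which by $\tilde\gamma(Q)=\gamma(Z)$ is exactly the set $Z_{\min}$ of minimal vectors of $Z$ in the alternative space. Since all four notions of Definition~\ref{defPerfectionAndEutaxy} depend only on the pair $(Q,\cdot)$ applied to the relevant finite set, the form $Q$ is perfect, eutactic, weakly perfect, etc., precisely when $(Q,Z_{\min})$ is, that is, precisely when $Z$ is in the sense of Definition~\ref{defPerfectionAndEutaxyBis}; and a nonempty subset of $Q_{\min}$ is literally the same object as a nonempty subset of $Z_{\min}$. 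Thus the right-hand sides of (i) and (ii) translate verbatim between the two formulations.

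Next I would match extremality. By the Cartan decomposition $\G=\GK\exp(\gp)$ (Theorem~\ref{thmStructureSelfdual}) and the fact that $\GK$ preserves $Q$, every element of $\Q$ is of the form $Q_H$ with $H\in\gp$ (Proposition~\ref{propAdjoinization}), the corresponding class in $\GK\backslash\Z$ being $\GK\exp(H/2)Z$; moreover $\tilde\gamma(Q_H)=\gamma(\GK\exp(H/2)Z)$ because $Q_H=Q\circ\exp(H/2)$. Proposition~\ref{propAdjoinization} tells us that $H\mapsto Q_H$ furnishes a basis of neighborhoods of $Q$ in $\Q$ as $\norm{H}_Q\to0$, while the analogous map $H\mapsto\GK\exp(H/2)Z$ will be shown below to furnish a basis of neighborhoods of $\GK Z$ in $\GK\backslash\Z$. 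Since the values of the two Hermite functions agree along these parametrizations, a local (resp. strict local) maximum on one side is a local (resp. strict local) maximum on the other, so $Z$ is extreme (resp. strictly extreme) if and only if $Q$ is.

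The one genuinely technical point, and the main obstacle, is the claim that $H\mapsto\GK\exp(H/2)Z$ is a homeomorphism from a neighborhood of $0$ in $\gp$ onto a neighborhood of $\GK Z$ in $\GK\backslash\Z$. Surjectivity onto a neighborhood follows from the transitivity of $\G$ on $\Z$ together with the uniqueness in the Cartan decomposition, while local injectivity reduces to showing that, for $H$ small, $\GK\exp(H/2)Z=\GK Z$ forces $H=0$. Here I would invoke the discreteness of the stabilizer $\mathbf{H}=\{g\in\G\mid gZ=Z\}$ and the compactness of $\GK$ to ensure that the nontrivial cosets $\GK h$ with $h\in\mathbf{H}\setminus\GK$ remain bounded away from $\id_V$, so that $\exp(H/2)\in\GK\mathbf{H}$ with $H$ small already forces $\exp(H/2)\in\GK$; as $\exp(H/2)$ is also $Q$-selfadjoint and positive, this gives $\exp(H/2)=\id_V$ and hence $H=0$. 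With this local homeomorphism established, the neighborhood correspondence of the previous paragraph is justified and the theorem follows from Theorem~\ref{thmVoronoi}.
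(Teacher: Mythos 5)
Your proposal is correct and follows exactly the paper's own route: the paper's proof of Theorem~\ref{thmVoronoiBis} passes to the associated Voronoi space $\quartet{V}{\Q}{Z}{\G}$ with $\Q=\{Q\circ g\mid g\in\G\}$ and invokes Theorem~\ref{thmVoronoi}, asserting (without detail) the equivalence of extremality, perfection and eutaxy on the two sides, with $Q_{\min}=Z_{\min}$ just as you observe. Your extra verification that $\GK Z_H=\GK Z$ with $\norm{H}_Q$ small forces $H=0$ --- via discreteness of the stabilizer $\{g\in\G\mid gZ=Z\}$, local finiteness of the cosets $\GK h$ meeting a compact neighborhood of $\id_V$, and $\GK\cap\exp(\gp)=\{\id_V\}$ from the uniqueness of the polar decomposition (Lemma~\ref{lemUniquenessDecomposition}) --- is sound and fills in the one transfer step (strict extremality of $Z$ implying strict extremality of $Q$) that the paper's two-line proof and Proposition~\ref{propAdjoinizationBis} leave implicit.
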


\begin{proof}
  Let $Z\in\Z$, and let $\quartet{V}{\Q}{Z}{\G}$ be the Voronoi space
  associated to $\quartet{V}{Q}{\Z}{\G}$ and $Z$.
  Then the extremality, perfection, eutaxy, etc., of $Z$
  in the alternative Voronoi space $\quartet{V}{Q}{\Z}{\G}$
  is equivalent the extremality, perfection, eutaxy, etc., of $Q$
  in the Voronoi space $\quartet{V}{\Q}{Z}{\G}$.
  Therefore, the theorem is equivalent to the
  corresponding theorem for Voronoi spaces (Theorem~\ref{thmVoronoi}).
\end{proof}

Finally, we give a description of neighborhoods in alternative Voronoi space:
\begin{Proposition}\label{propAdjoinizationBis} (Compare with Proposition~\ref{propAdjoinization}.)
  Let $\quartet{V}{Q}{\Z}{\G}$ an alternative Voronoi space, and let $Z\in\Z$.
  For $H\in\gp$, we define $Z_H$ by
  \begin{equation*}
    Z_H = \exp(H/2)Z.
  \end{equation*}
  Then each $Z'\in\Z$ can be written as
  $Z=kZ_H$ for some $H\in\gp$ and some $k\in\GK$.
  In particular, if, for $Z\in\Z$ and $\epsilon>0$, we set
  \begin{equation*}
    V_{Z,\epsilon} = \bigl\{ \GK Z_H\in\GK\backslash\Z \bigm| H\in\gp\ \text{and}\
    \norm{H}_{Q}<\epsilon \},
  \end{equation*}
  then the family $\{V_{Z,\epsilon}\}_{\epsilon>0}$
  is a basis of neighborhoods of $\GK\backslash\Z$.
\end{Proposition}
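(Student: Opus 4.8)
The plan is to derive both assertions from the structure theorem for~$\G$ (Theorem~\ref{thmStructureSelfdual}) together with the description of neighborhoods already obtained for Voronoi spaces (Proposition~\ref{propAdjoinization}). First I would record that the group of an alternative Voronoi space satisfies the hypotheses of Theorem~\ref{thmStructureSelfdual} for the Euclidean structure defined by~$Q$: condition~(ii) of Definition~\ref{defAlternativeVoronoiSpace} asserts precisely that $g^*_Q\in\G$ and $g^*_Qg\in\G^0$ for every $g\in\G$. Hence Theorem~\ref{thmStructureSelfdual}(iii) yields $\G=\GK\GP$ with $\GP=\exp(\gp)$. The first assertion is then immediate: given $Z'\in\Z$, transitivity (condition~(iii)) provides $g\in\G$ with $Z'=gZ$, and writing $g=k\exp(H/2)$ with $k\in\GK$ and $H\in\gp$ gives $Z'=k\exp(H/2)Z=kZ_H$.

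For the neighborhood statement I would exploit the equivalence with the ``quadratic forms'' formulation. Fixing the base point~$Z$, form the associated Voronoi space $\quartet{V}{\Q}{Z}{\G}$ with $\Q=\{Q\circ g\mid g\in\G\}$, and define $\pi\colon\Q\to\GK\backslash\Z$ by $\pi(Q\circ g)=\GK(gZ)$. This is well defined, since $Q\circ g=Q\circ g'$ forces $g'\in\GK g$, and surjective by transitivity; moreover $\pi(Q)=\GK Z$. The crucial computation is that $\pi$ carries the neighborhood basis of Proposition~\ref{propAdjoinization} onto the sets $V_{Z,\epsilon}$: because $\exp(H/2)$ is $Q$-selfadjoint one has $Q_H=Q\circ\exp(H/2)$, whence $\pi(Q_H)=\GK Z_H$ and therefore $\pi(V_{Q,\epsilon})=V_{Z,\epsilon}$.

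It now suffices to prove that $\pi$ is a continuous open surjection, for then the image under~$\pi$ of the neighborhood basis $\{V_{Q,\epsilon}\}$ of~$Q$ is a neighborhood basis of $\pi(Q)=\GK Z$, and Proposition~\ref{propAdjoinization} concludes the argument. Continuity is routine: $g\mapsto gZ$ is continuous from~$\G$ into~$\Z$ for the topology of simple convergence, the projection $\Z\to\GK\backslash\Z$ is continuous, and $g\mapsto Q\circ g$ identifies $\Q$ with $\GK\backslash\G$ homeomorphically (this identification being exactly what Theorem~\ref{thmStructureSelfdual}(iii) and Proposition~\ref{propAdjoinization} provide). For openness, after these identifications a subset of $\GK\backslash\Z$ is open if and only if its preimage in~$\Z$ is open, and for a $\GK$-saturated open set $O\subseteq\G$ the preimage of the corresponding image under~$\pi$ is $\{gZ\mid g\in O\}$, the image of~$O$ under the orbit map $\G\to\Z$. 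Thus openness of~$\pi$ reduces to openness of this orbit map.

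This openness of the orbit map is the main obstacle. I would obtain it from the standard fact that a continuous transitive action of a second-countable locally compact group---here the Lie group~$\G$---on a locally compact Hausdorff space has an open orbit map, so that the natural continuous bijection $\G/S\to\Z$, with $S=\{g\in\G\mid gZ=Z\}$, is a homeomorphism. The hypotheses hold once one checks that~$\Z$, being a closed subset of the space of closed discrete subsets of~$V$ for the topology of simple convergence, is locally compact and Hausdorff; everything else in the argument is formal.
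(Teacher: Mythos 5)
Your proof of the first assertion is the paper's own, word for word in substance: the paper also applies Theorem~\ref{thmStructureSelfdual} to $g$ with $Z'=gZ$, factoring $g^*_Qg=\exp(H)$ with $H\in\gp$ and setting $k=g\exp(-H/2)\in\GK$, so that $Z'=kZ_H$ (the paper's line ``$g=k\exp(H)$'' is a typo for $g=k\exp(H/2)$, which your version silently corrects).

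Where you genuinely diverge is the neighborhood statement, which the paper disposes of with the single sentence ``The rest of the lemma is straightforward.'' You instead supply a real argument: transport the basis of Proposition~\ref{propAdjoinization} through $\pi(Q\circ g)=\GK(gZ)$, verify $\pi(V_{Q,\epsilon})=V_{Z,\epsilon}$ via the identity $Q_H=Q\circ\exp(H/2)$ (correct, since $\exp(H/2)$ is \pf{Q}-selfadjoint), and reduce to $\pi$ being a continuous open surjection, hence to openness of the orbit map $\G\to\Z$. Your intermediate reductions are all sound: well-definedness of $\pi$ from $Q\circ g=Q\circ g'\Leftrightarrow g\in\GK g'$, the homeomorphism $\Q\cong\GK\backslash\G$ (which does follow from Theorem~\ref{thmStructureSelfdual}(iii), the inverse being $Q'\mapsto\log$ of the Gram operator), and the \pf{\GK}-saturation argument identifying the preimage of $\pi(\text{open})$ with the image of the orbit map. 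The one soft spot is exactly the one you flag and defer: the Effros-type open-mapping theorem requires $\Z$ to be at least a Baire Hausdorff space (e.g.\ locally compact Hausdorff and second countable), and the paper's ``topology of simple convergence'' on $\Z$ is never specified precisely enough to make that verification routine; it does not follow merely from $\Z$ being closed among closed discrete subsets, since the space of such subsets need not itself be locally compact. Since this is the only place any real difficulty of the statement could hide, and the paper offers nothing there at all, your deferral is defensible, but it is the substantive remaining obligation rather than a formality. If the topology resists, a cheaper closing move is available: Theorem~\ref{thmStructureSelfdual}(iii) makes $\GK\times\exp(\gp)\to\G$ a homeomorphism, so if one reads the topology on $\GK\backslash\Z$ as the quotient topology coming from $\G/S$, with $S$ the stabilizer of $Z$, openness of the orbit map is automatic and your argument closes without any appeal to Effros.
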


\begin{proof}
  Let $Z'\in\Z$, and let $g\in\G$ such that $Z'=gZ$.
  According to Theorem~\ref{thmStructureSelfdual}, we have
  $g_Q^*g\in\GP=\exp(\gp)$,
  therefore there exists an $H\in\gp$ such that $g_Q^*g=\exp(H)$.
  Let $k=g\exp(-H/2)$; we have $k\in\GK$ and $g=k\exp(H)$,
  and we have $Z'=k Z_H$.
  The rest of the lemma is straightforward.
\end{proof}

\subsection{Regular alternative Voronoi space}\label{parRegularAQV}

There is not, in general, a natural notion of design
for an arbitrary alternative Voronoi space; but we need to restrict
to the following class
of alternative Voronoi spaces.

\begin{Definition}\label{defRegularAQV}
  Let $\quartet{V}{Q}{\Z}{\G}$ be an alternative Voronoi space, and let
  \begin{equation*}
    \Omega = \overline{\bigcup_{Z\in\Z}[Z]}\in\PP(V),
  \end{equation*}
  where $[\ ]$ denotes the projection $V\smz\to\PP(V)$ and
  $\overline{E}$ denotes
  the adherence of~$E$.
  The space $\quartet{V}{\Q}{Z}{\G}$ is \emph{regular} when
  $\Omega$ is an orbit of $\G$ acting on $\PP(V)$.
\end{Definition}

The following fact is crucial for the definition of designs.

\begin{Proposition}\label{propRegularAQV}
  Let $\quartet{V}{Q}{\Z}{\G}$ be a regular alternative Voronoi space.
  Then its compact group $\GK$
  is transitive on $\Omega$.
\end{Proposition}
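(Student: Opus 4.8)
The plan is to combine the transitivity of the full group with the Iwasawa-type decomposition of Theorem~\ref{thmStructureSelfdual}, the essential point being that $\Omega$, as a closed subset of the compact space $\PP(V)$, is itself compact. Fix a point $\omega_0\in\Omega$ and let $P=\{g\in\G\mid g\omega_0=\omega_0\}$ be its stabilizer; since $\Omega$ is a single $\G$-orbit (this is exactly the hypothesis of regularity, Definition~\ref{defRegularAQV}), we have $\Omega=\G/P$, and $\GK$ is transitive on $\Omega$ if and only if $\G=\GK P$. By Theorem~\ref{thmStructureSelfdual}(iv) we have $\G=\GK\GD$ with $\GD$ a connected triangular, hence solvable, subgroup. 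Consequently it suffices to exhibit a single point $\omega_0\in\Omega$ fixed by $\GD$: then $\GD\subseteq P$, whence $\G=\GK\GD\subseteq\GK P\subseteq\G$, and we are done. So the whole proposition reduces to the existence of a $\GD$-fixed point inside $\Omega$.

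To produce such a point I would use the dynamics of the $\GP$-part on the compact set $\Omega$. Write $\GD=AN$, where $A=\exp(\mathfrak a)$ with $\mathfrak a\subseteq\gp$ a maximal abelian subspace chosen to contain the selfadjoint part $\gz(\g)\cap\gp$ of the centre, and $N=\exp(\mathfrak n)$ the triangular horospherical part; here $\mathfrak n$ is spanned by the positive restricted root spaces, on which $\mathrm{Ad}(\exp(tH))$ expands for $H$ in the open positive chamber. The elements of $\mathfrak a$ are $Q$-selfadjoint, hence simultaneously diagonalizable with $Q$-orthogonal weight spaces $V=\bigoplus_\mu V_\mu$. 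Pick a regular $H\in\mathfrak a$, so that $\max_\mu \mu(H)$ is attained at a unique weight $\mu_{\max}$. For any $\omega=[x]\in\Omega$ whose component in $V_{\mu_{\max}}$ is nonzero, the limit $\omega_0=\lim_{t\to+\infty}\exp(tH)\omega$ exists and is the projectivization of that component. Such an $\omega$ exists because $\Omega$ spans $V$ (each $Z\in\Z$ spans $V$), and since $\Omega$ is closed and $\G$-invariant we get $\omega_0\in\Omega\cap\PP(V_{\mu_{\max}})$. As $A$ acts on the weight space $V_{\mu_{\max}}$ through the single character $\mu_{\max}$, it acts there by scalars, so $\omega_0$ is fixed by $A$.

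It remains to check that $\omega_0$ is also fixed by $N$, and this is the step I expect to be the crux. The argument is the standard contraction estimate at an attracting fixed point: for $n\in N$ put $n_t=\exp(-tH)\,n\,\exp(tH)$, so that $n_t\to e$ as $t\to+\infty$ because $\mathrm{Ad}(\exp(-tH))$ contracts $\mathfrak n$. Then
\[
  n\,\omega_0=\lim_{t\to+\infty} n\exp(tH)\omega
  =\lim_{t\to+\infty}\exp(tH)\,(n_t\omega),
\]
and since $n_t\omega\to\omega$ remains, for large $t$, in a compact subset of the basin of attraction (the lines with nonzero $V_{\mu_{\max}}$-component), on which $\exp(tH)$ converges uniformly to the projection onto $\PP(V_{\mu_{\max}})$, the right-hand side equals $\omega_0$. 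Hence $n\omega_0=\omega_0$ for every $n\in N$, so $\omega_0$ is fixed by $\GD=AN$, which completes the reduction. The main obstacle is therefore not algebraic but analytic: one must justify the uniform convergence of $\exp(tH)$ on compact subsets of the basin and verify that the perturbed points $n_t\omega$ stay in that basin, and it is precisely here that the compactness of $\Omega$ and the unique-top-weight choice of the regular element $H$ are indispensable.
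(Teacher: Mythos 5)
Your proof is correct, and its skeleton is the same as the paper's: both reduce the proposition, via the decomposition $\G=\GK\GD$ of Theorem~\ref{thmStructureSelfdual}(iv) and the chain $\Omega=\G\xi=\GK\GD\xi=\GK\xi$, to producing a single $\GD$-fixed point in the closed $\G$-invariant set $\Omega$. The difference lies in how that fixed point is obtained. The paper simply invokes Lemma~\ref{lemActionTriangular}, a Kolchin-type fixed-point theorem for connected triangular groups acting on closed invariant subsets of $\PP(V)$, quoted from \cite{OV}; you instead reprove the special case needed here by projective dynamics: writing $\GD=AN$, picking $H\in\mathfrak{a}$ with a unique top weight $\mu_{\max}$ on $V$, flowing a point of $\Omega$ with nonzero $V_{\mu_{\max}}$-component (which exists because $\Omega$ spans $V$) to an $A$-fixed point $\omega_0\in\Omega\cap\PP(V_{\mu_{\max}})$, and then killing $N$ by the contraction $n_t=\exp(-tH)\,n\exp(tH)\to e$ together with uniform convergence of $\exp(tH)$ to the projection on compact subsets of the basin. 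All the analytic steps you flag as the crux do go through; this is the standard attracting-fixed-point estimate, and the interchange $n\exp(tH)=\exp(tH)n_t$ plus continuity of the limit projection closes it. Two small points deserve a sentence in a final write-up. First, the structure $\GD=AN$ with $A\subseteq\exp(\gp)$ abelian selfadjoint is not part of the \emph{statement} of Theorem~\ref{thmStructureSelfdual}(iv), which only asserts triangularity; it is how $\GD=\GD'\,\bigl(\GZ(\G)^0\cap\GP\bigr)$ is built in the proof, so you are leaning on the construction rather than the statement — for an abstract connected triangular group your weight-space argument would not apply as written (no selfadjointness), and that generality is exactly what the cited lemma supplies. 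Second, you need $H$ simultaneously in the open chamber (so $\mathrm{Ad}(\exp(-tH))$ contracts $\mathfrak{n}$) and generic for the finitely many weights of $V$ (so $\mu_{\max}$ is unique); both conditions fail only on a finite union of hyperplanes, hence are compatible, but this should be said. As for what each route buys: the paper's citation keeps the proof to three lines and covers arbitrary triangular groups, while your argument is self-contained and exhibits the fixed point concretely as a highest-weight, attracting point of $\Omega$ — in particular it shows directly that $\Omega$ meets $\PP(V_{\mu_{\max}})$.
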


In order to show that statement,
we need the following fact on triangular linear Lie groups:

\begin{Lemma}\label{lemActionTriangular}
  Let $\GD$ be a connected triangular subgroup of $\GL(V)$
  and let $\Omega$ be a \pf{\GD}-invariant closed subset
  of the projective space $\PP(V)$.
  Then $\GD$ has a fixed point in~$\Omega$.
\end{Lemma}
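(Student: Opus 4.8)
The plan is to exploit the triangular structure to reduce the statement to a fixed-point problem for an affine action, and then to show that this affine action is in fact trivial on the relevant compact set. Fix a basis $e_1,\dots,e_n$ of $V$ with respect to which every element of $\GD$ is upper triangular, and set $V_k=\Span(e_1,\dots,e_k)$, so that each $V_k$ is $\GD$-invariant and $\PP(V_k)\subseteq\PP(V)$ is a $\GD$-invariant closed subset. Since $\PP(V)$ is compact, $\Omega$ is compact. Let $k$ be the least index for which $\Omega_k:=\Omega\cap\PP(V_k)$ is nonempty; then $\Omega_k$ is a nonempty compact $\GD$-invariant set disjoint from $\PP(V_{k-1})$. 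First I would pass to the affine chart of $\PP(V_k)$ consisting of the lines with nonzero $e_k$-component: every such line has a unique representative $e_k+u$ with $u\in V_{k-1}$, which identifies $\PP(V_k)\setminus\PP(V_{k-1})$ with $V_{k-1}$. Under this identification $\Omega_k$ becomes a nonempty compact subset $S\subseteq V_{k-1}$, and a short computation shows that $\GD$ acts on $V_{k-1}$ by affine maps $u\mapsto A_g u+b_g$, where $A_g=\lambda_k(g)^{-1}\,g|_{V_{k-1}}$ and $\lambda_k(g)$ is the $k$-th diagonal entry of $g$; here connectedness of $\GD$ guarantees that all diagonal entries of $g$ are positive, so that $A_g$ is again upper triangular with positive diagonal. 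The set $S$ is invariant under this affine action.

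Next I would form the compact convex hull $C=\mathrm{conv}(S)$ and its affine span $A$, both $\GD$-invariant; write $\vec A$ for the direction of $A$, a linear subspace of $V_{k-1}$ preserved by each $A_g$. The key point is to bound the linear parts: the centroid $c_0$ of $C$ (taken in $A$) is affine-canonical, hence $\GD$-fixed, and after recentering at $c_0$ the maps $A_g|_{\vec A}$ preserve the convex body $C-c_0$, so the group $\{A_g|_{\vec A}\mid g\in\GD\}$ preserves its gauge and is therefore bounded. Now the eigenvalues of $A_g|_{\vec A}$ lie among the positive diagonal entries of $A_g$; boundedness of $\{(A_g|_{\vec A})^m\mid m\in\ZZ\}$ forces every such eigenvalue to be $1$, so $A_g|_{\vec A}$ is unipotent, and a bounded unipotent operator is the identity. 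Thus $A_g|_{\vec A}=\id$ for every $g$, i.e.\ $\GD$ acts on $A$ by translations; since it fixes $c_0$ (equivalently, since $S\subseteq A$ is compact and invariant under these translations), every translation is trivial. Hence $\GD$ fixes every point of $A$, in particular every point of $S$, and any point of $S$ corresponds to a $\GD$-fixed point lying in $\Omega_k\subseteq\Omega$.

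I expect the main obstacle to be exactly the requirement that the fixed point lie in $\Omega$ rather than merely in its convex hull: the naive approach---averaging, or invoking a Borel- or Markov--Kakutani-type fixed-point theorem on $C$---produces a fixed point in $C$ that need not correspond to a point of $\Omega$. The device that overcomes this is the observation above that, because the linear parts are simultaneously upper-triangular-with-positive-diagonal and bounded, they must be trivial, which upgrades ``a fixed point exists in $C$'' to ``the action on $A$ is trivial'', so that the fixed point can be taken inside $S$ itself. The remaining steps---checking the affine action formula, the invariance of $A$ and $\vec A$, the positivity of the diagonal entries from connectedness, and the elementary facts that a group of linear automorphisms of a convex body is bounded and that a bounded unipotent is the identity---are routine.
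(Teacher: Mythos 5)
Your proof is correct, but it cannot be ``essentially the same'' as the paper's, because the paper does not prove this lemma at all: it simply refers to \cite[Chap.~4, Prop.~4.6, p.~161]{OV}, where the statement comes out of the general structure theory of triangular (split solvable) linear groups. What you give instead is a self-contained elementary argument, and it holds up. The reduction to the least $k$ with $\Omega\cap\PP(V_k)\ne\emptyset$ and the passage to the affine chart $\{[e_k+u]\mid u\in V_{k-1}\}$ are standard; the real content is your rigidity step for the linear parts, and it is sound: $\vec A$ is $A_g$-invariant, so the spectrum of $A_g|_{\vec A}$ sits among the diagonal entries $\lambda_i(g)/\lambda_k(g)$, which are positive reals precisely because $\GD$ is connected --- and this is exactly where connectedness must enter, since the lemma fails without it (take the two-element diagonal group generated by $\mathrm{diag}(1,-1)$ acting on $\PP(\RR^2)$ with $\Omega=\{[e_1+e_2],[e_1-e_2]\}$: closed, invariant, no fixed point). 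Invariance of the body $C-c_0$ bounds $\{(A_g|_{\vec A})^m\mid m\in\ZZ\}$ via the gauge, forcing every eigenvalue to be $1$; a bounded unipotent has polynomially growing powers unless trivial, so $A_g|_{\vec A}=\id$; the centroid's affine equivariance then kills the translation part. The supporting facts you call routine indeed are, and your remark about why a Markov--Kakutani or amenability-type fixed point in $C$ is insufficient (it need not lie in $S$) correctly identifies the crux. Two minor observations: you tacitly assume $\Omega\ne\emptyset$, as the lemma itself must; and your argument in fact proves more than stated --- the action of $\GD$ is trivial on the entire affine span of $\Omega\cap\PP(V_k)$ in the chart, so \emph{every} point of that minimal layer is fixed, which is a stronger conclusion than the existence asserted in the lemma and than what the citation to \cite{OV} provides.
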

For a proof, see~\cite[Chap.~4, Prop.~4.6, p.~161]{OV}.

\begin{proof}[Proof of Proposition~\ref{propRegularAQV}]
  According to Theorem~\ref{thmStructureSelfdual},
  there exists a connected triangular subgroup $\GD$ of $\G$
  such that $\G=\GK\GD$. By Lemma~\ref{lemActionTriangular},
  there exists $\xi\in\Omega$ such that $\GD\xi=\xi$.
  As the alternative Voronoi space is regular, its group $\G$ is transitive on $\Omega$,
  and we have
  \begin{equation*}
    \Omega = \G\xi = \GK\GD\xi = \GK\xi,
  \end{equation*}
  therefore $\GK$ is transitive on $\Omega$.
\end{proof}

\subsection{Examples}\label{parExamplesBis}

We give here the alternative Voronoi spaces corresponding
to the Voronoi spaces
given in Paragraph~\ref{parExamples},
and we add another example of alternative Voronoi space.

\begin{enumerate}[1.]
  \item \textit{Classic spaces.} Let $V=\RR^n$ with $n\ge2$,
    let $Q$ be the canonical quadratic form on $\RR^n$, let
    $\Z$ be the set of Euclidean lattices in $\RR^n$ of determinant~$1$
    without their zero vector,
    and let $\G=\SL^{\pm}(n,\RR)$.
    Then $\quartet{V}{Q}{\Z}{\G}$
    is a regular alternative Voronoi space with $\Omega=\PP(\RR^n)$.
    
  \item\label{exDualityAQV} \textit{Duality.}
    Let $\quartet{V}{Q}{\Z}{\G}$ be an alternative Voronoi space.
    Let $Z\mapsto Z^\alpha$ be an involution of~$\Z$,
    and $g\mapsto g^\alpha$ be an involution and automorphism of~$\G$
    such that $(g^*_Q)^\alpha = (g^\alpha)^*_Q$ and
    \begin{equation*}
      (gZ)^\alpha = g^\alpha Z^\alpha,\quad
      \forall g\in G,\ \forall Z\in\Z.
    \end{equation*}
    We define an alternative Voronoi space
    $\quartet{\tilde{V}}{\tilde{Q}}{\tilde{\Z}}{\tildeG}$ as follows: 
    \begin{gather*}
      \tilde{V} = V\times V, \\
      \tilde{Q}\bigl((x_1,x_2)\bigr) = Q(x_1)+Q(x_2),\\
      \tilde{\Z} = \bigl\{(cZ\times\{0\})\cup (\{0\}\times c^{-1}Z^\alpha)
        \bigm| Z\in\Z,\ c\in\RR^*\bigr\},\\
      \begin{split}
        \tildeG = \Bigl\{ \tilde{g} \in\GL(V\times V)
          &\Bigm| \tilde{g} =
            \begin{pmatrix} cg & 0 \\ 0 & c^{-1}g^\alpha \end{pmatrix}\ \text{or}\
            \begin{pmatrix} 0 & cg \\ c^{-1}g^\alpha & 0 \end{pmatrix}\\[-2\jot]
          &\qquad\qquad\qquad\text{for some $g\in\G$ and some $c\in\RR^*$} \Bigr\}.
      \end{split}
    \end{gather*}
    We have
    \begin{equation*}
      \tilde{\Omega} = \bigl(\Omega\times\{0\}\bigr)
        \cup \bigl(\{0\}\times\Omega\bigr),
    \end{equation*}
    and the space $\quartet{\tilde{V}}{\tilde{Q}}{\tilde{\Z}}{\tildeG}$ is regular
    if $\quartet{V}{Q}{Z}{\G}$ is regular.
    The local maxima of the Hermite function $\tilde{\gamma}$ of
    $\quartet{\tilde{V}}{\tilde{Q}}{\tilde{\Z}}{\tildeG}$
    correspond to the local maxima of the function
    $\gamma'(Z)= \bigl(\gamma(Z)\*\,\gamma(Z^\alpha)\bigr)^{1/2}$
    on $\Z$.
    
    As a particular case, we take for $\quartet{V}{Q}{\Z}{\G}$ the classic alternative
    Voronoi space of rank~$n$, with $g^\alpha=g^{{*}{-1}}$ and
    $Z^\alpha=Z^*$, where $Z^*\cup\{0\}$ is the dual lattice
    of $Z\cup\{0\}$.
    
  \item\label{exOrbitAQV} \textit{Family of lattices.}
    Let $\quartet{V}{Q}{\Z}{\G}$ be an alternative Voronoi space,
    let $Z$ be an element of~$\Z$, 
    and let $\mathbf{H}$ be a connected closed subgroup of $\G$
    such that $g^*_Q\in\mathbf{H}$ when $g\in\mathbf{H}$.
    Let $\mathbf{H}Z$ be the orbit of $Z$ in $\Z$.
    Then $\quartet{V}{Q}{\mathbf{H}Z}{\mathbf{H}}$ is an alternative Voronoi space,
    which is in general not regular, even when $\quartet{V}{Q}{\Z}{\G}$ is regular.
    
    As special cases, we have:
    \begin{enumerate}[(a)]
      \item 
        Let $\mathbf{\Gamma}$ be a finite subgroup of $\GK$,        
        let $Z\in\Z$ be a \pf{\mathbf{\Gamma}}-invariant set,
        that is such that $\gamma Z=Z$ for every $\gamma\in\mathbf{\Gamma}$,
        and let $\mathbf{H}$ be the connected component of the identity
        of the centralizer of $\mathbf{\Gamma}$ in $\G$.
        Then $\mathbf{H}$ is \pf{Q}-selfadjoint and
        $\mathbf{H}Z$ is
        a connected component of
        the set of \pf{\mathbf{\Gamma}}-invariant elements of~$\Z$.
        (Note that, contrary to what happens with the dual formulation
        in terms of Voronoi space,
        $\mathbf{H}Z$ is not necessary the entire set of
        \pf{\mathbf{\Gamma}}-invariant elements of~$\Z$.)
        So, $\quartet{V}{Q}{\mathbf{H}Z}{\mathbf{H}}$
        is an alternative Voronoi space.

      \item Let $Z\mapsto Z^\alpha$ and $g\mapsto g^\alpha$
        as in Example~\ref{exDualityAQV}; let $\sigma\in\G$ such that
        $(\sigma^*)^{-1}=\sigma$.
        An element $Z\in\Z$ such that $Z^\alpha=\sigma Z$ is called 
        \emph{\pf{\sigma}-isodual}.

        Let $Z$ be a \pf{\sigma}-isodual element of $\Z$.
        Let $\mathbf{H}$ be the connected component of the identity
        of the elements of $g\in\G$
        satisfying $\sigma g=g^\alpha\sigma$.
        Then $\mathbf{H}$ is \pf{Q}-selfadjoint,
        and $\mathbf{H}Z$ is a connected component of
        the set of \pf{\sigma}-isodual elements of~$\Z$.
        So, $\quartet{V}{Q}{\mathbf{H}Z}{\mathbf{H}}$ is an alternative Voronoi space.
    \end{enumerate}   

  \item\label{exExteriorAQV} \textit{Exterior powers or Grassmannian spaces.}
    Let $\quartet{V}{Q}{\Z}{\G}$ be an alternative Voronoi space
    and let $1\le m\le\dim(V)/2$.
    We define a space $\quartet{V^{\wedge m}}{Q^{\wedge m}}{\Z^{\wedge m}}{\G^{\wedge m}}$
    as follows: $V^{\wedge m}$ is the $m$th exterior power of $V$,
    and
    \begin{gather*}
      Q^{\wedge m}(x_1\wedge\dots\wedge x_m)
        = \det\bigl(Q(x_i,x_j)_{i,j=1}^{m}\bigr), \\
      \Z^{\wedge m}=\{Z^{\wedge m}\mid Z\in\Z\},\qquad
      \G^{\wedge m} = \{\tilde{g} \mid g\in\G\},
    \end{gather*}
    where
    \begin{gather*}
        Z^{\wedge m} = \{x_1\wedge\dots\wedge x_m
        \mid \text{$x_1,\dots,x_m$ are linearly independent elements of $Z$}\},\\
      \tilde{g}(x_1\wedge\dots\wedge x_m) = gx_1\wedge\dots\wedge gx_m.
    \end{gather*}
    Note that the fact that $\quartet{V}{Q}{\Z}{\G}$ is regular
    does not imply that the space
    $\quartet{V^{\wedge m}}{Q^{\wedge m}}{\Z^{\wedge m}}{\G^{\wedge m}}$
    is regular.
    If $\quartet{V}{Q}{\Z}{\G}$ is the classic Voronoi space of rank~$n$,
    then $\quartet{V^{\wedge m}}{Q^{\wedge m}}{\Z^{\wedge m}}{\G^{\wedge m}}$ is regular, with
    \begin{equation*}
      \Omega^{\wedge m} = \bigl\{[x_1\wedge\dots\wedge x_m]\bigm|
        \text{$x_1, \dots,x_m$ are linearly independent elements of $\RR^n$}\bigr\},
    \end{equation*}
    which is the space of nonoriented Grassmannians of dimension~$m$
    in~$\RR^n$.

  \item \textit{Humbert forms.}
    Let $K$ be a number field of signature $(r,s)$,
    of integral ring $\mathcal{O}_K$,
    let $\sigma_1,\dots,\sigma_r$ the real embeddings of $K$,
    and $\sigma_{r+1},\dots,\sigma_{r+2s}$ its complex embeddings,
    with $\sigma_{r+s+i}=\overline{\sigma_{r+i}}$,
    and let $n$ be a positive integer.
    In what follows, `$\otimes$'' denotes always the tensor product as
    \textit{real} vector space.
    Let $V$ be the linear subspace of $(\RR^n)^{\otimes r}\otimes(\CC^n)^{\otimes 2s}$
    generated by $x_1\otimes\dots\otimes x_{r+2s}$, where
    \begin{gather*}
      x_i\in\RR^n \quad \text{for $1\le i\le r$},\\
      x_{r+i}\in\CC^n \ \text{and}\ x_{r+s+i}=\overline{x_{r+i}}\quad \text{for $1\le i\le s$}.
    \end{gather*}
    Let $Q$ be the quadratic form on $V$ defined by
    \begin{equation*}
      Q(x_1\otimes\dots\otimes x_{r+2s})=\prod_{j=1}^{r+2s}\scal{x_j}{x_j}.
    \end{equation*}   
    Let $\G$ be the linear group acting on $V$ and whose elements $g$ satisfy
    \begin{equation*}
      g(x_1\otimes\dots\otimes x_{r+2s})
       = g_1x_1\otimes\dots\otimes g_{r+2s}x_{r+2s},
    \end{equation*}
    for some $g_i\in\GL(n,\RR)$ for $1\le i\le r$,
    and some $g_{r+i}\in\GL(n,\CC)$ and $g_{r+s+i}=\overline{g_{r+i}}$ for $1\le i\le r$,
    such that
    \begin{equation*}
      \prod_{j=1}^{r+2s}\det(g_j)=1.
    \end{equation*}
    Let $\Z = \{gZ\mid g\in\G\}$, where
    \begin{equation*}
      Z=\{\sigma_1(x)\otimes\dots\otimes \sigma_{r+s}(x)
      \mid x\in\mathcal{O}_n\}.
    \end{equation*}
    We obtain a regular alternative Voronoi space $\quartet{V}{\Q}{Z}{\G}$ with
    \begin{multline*}
      \Omega=\bigl\{ [x_1\otimes\dots\otimes x_{r+2s}] \bigm|
      \text{$x_i\in\RR^n\smz$ for $1\le i\le r$,} \\
      \text{$x_{r+i}\in\CC^n\smz$ and $x_{r+s+i}=\overline{x_{r+i}}$
        for $1\le i\le s$} \bigr\}.
    \end{multline*}
    
  \item\label{exIsotropy} \textit{Isotropy in isodual lattices.}
    We add an example of alternative Voronoi space, which does not seem
    to have a natural formulation
    in terms of Voronoi space.
    
    Let $V=\RR^n$. An \emph{isodual} lattice
    is the data of a Euclidean lattice $\Lambda\subseteq\RR^n$
    and an orthogonal transformation $\sigma\in\Orth(n)$ such that
    $\Lambda^*=\sigma(\Lambda)$.
    The lattice is \emph{unimodular} when $\sigma=\pm\id$,
    \emph{orthogonal} when $\sigma^2=\id$ and $\sigma\ne\id$,
    and \emph{symplectic} when $\sigma^2=-\id$.
    For each isodual lattice $(\Lambda,\sigma)$, we set
    \begin{equation*}
      Z_{(\Lambda,\sigma)} = \{ z\in\Lambda \mid \scal{z}{\sigma z} = 0\} \smz.
    \end{equation*}
    Let $\sigma\in\Orth(n)$;
    let $\Z$ be a connected component of the set of $Z_{(\Lambda,\sigma)}$,
    where $\Lambda$ is a lattice such that $(\Lambda,\sigma)$ is isodual;
    let $Q$ be the canonical quadratic form on $\RR^n$,
    and let
    \begin{equation*}
      \G = \{g\in\GL(n,\RR) \mid \sigma g={g^*}^{-1}\sigma \}.
    \end{equation*}
    Then $\quartet{\RR^n}{Q}{\Z}{\G}$ is an alternative Voronoi space
    of Lie algebra
    \begin{equation*}
      \g = \{H\in\glin(n,\RR) \mid \sigma H + H\sigma =0 \}.
    \end{equation*}
    The case where $\sigma^2=\id$ and $\sigma$ is of signature $({n-1},\rbreak1)$
    has been studied by Ch.~Bavard in~\cite{BavardLorentzien}.
    
\end{enumerate}

\section{Designs}\label{sectDesign}

The term of ``spherical design'' was introduced in a paper
of Delsarte, Goethals and Seidel
\cite{DelsarteGoethalsSeidel} as
an equivalent on the Euclidean sphere of the notion
of design in combinatorics, although it is also a
particular case of the notion of cubature formula on sphere,
as studied by Sobolev \cite{Sobolev}, \cite{SobolevBook}.

Boris Venkov has shown that
a lattice whose minimal vectors form a spherical \pf{4}-design
is extreme \cite{VenkovMartinet1}, criterion which has been extended
in few other situations.

In this section, we introduce an appropriate
notion of designs in regular alternative Voronoi spaces,
for which the criterion of Venkov holds (Corollary~\ref{corPerfectionCriterion}).
The frame in which we define design
is a particular case of the notion of
\emph{polynomial space} as defined and studied in \cite{PacheThesis}.
Other noteworthy, although more restrictive
notions of polynomial space are found, e.g., in
\cite[Chap.~14]{Godsil} and \cite{Gaeta}.
In the present paper, we do not develop the theory more than
needed for our purpose.

In the last paragraph, we give a useful criterion
(Corollary~\ref{corInvarianceDesign})
which allows to check that a set is a design
by considering the invariants of his automorphism group.

\subsection{Definitions}
We fix $\quartet{V}{Q}{\Z}{\G}$ a regular alternative 
Voronoi space (Definition~\ref{defRegularAQV}).
Recall that $\GK$ denotes the compact subgroup of \pf{Q}-orthogonal elements of~$\G$,
and $\gp$ denotes the subspace of \pf{Q}-selfadjoint elements of the Lie algebra
$\g$ of $\G$.
As the group $\GK$ is compact and transitive on $\Omega$ (Proposition~\ref{propRegularAQV}),
there is a unique \pf{\GK}-invariant probability measure~$\mu$ on $\Omega$.

Consider the following linear subspaces of $\End(V)$, the associative algebra
of linear endomorphisms of~$V$:
\begin{gather*}
  \pp_0 = \RR\,\id_V, \qquad
  \pp_1 = \RR\,\id_V + \gp,\\
  \pp_k = \Span\{H_1H_2\dotsm H_k+H_kH_{k-1}\dotsm H_1\mid H_i\in\gp\oplus\RR\,\id_V\},\ k\ge1.
\end{gather*}
The elements of $\pp_k$ are \pf{Q}-selfadjoint, and
we have, for $i,j\ge0$,
\begin{equation*}
  H\in\pp_i,\ J\in\pp_j\ \Rightarrow\ HJ+JH\in\pp_{i+j}.
\end{equation*}
In other words, $\pp_{i+j}$ is the subspace of \pf{Q}-selfadjoint elements
of the space generated by multiplications of elements of $\pp_i$
with elements of $\pp_j$.

Now we define the following finite-dimensional subspaces
of the space $\mathcal{C}(\Omega,\RR)$
of real continuous functions on $\Omega$:
Let $\{2k_1,\dots,2k_d\}$ be a finite set with multiplicities
whose elements are even positive integers.
We set:
\begin{equation*}
  \Fc_\emptyset = \RR\,\id_\Omega
  \qquad
  \Fc_{\{2k_1,\dots,2k_d\}} = \Span\Bigl\{ x\mapsto \prod_{i=1}^{d} \frac{Q(x,H_ix)}{Q(x)}
    \Bigm| H_i\in\pp_{k_i} \Bigr\}.
\end{equation*}
and, for $\tau$ an even nonnegative integer,
\begin{equation*}
  \Fc_\tau = \sum_{\substack{\{2k_1,\dots, 2k_d\}\\2k_1+\dots+2k_d\le\tau}}\Fc_{\{2k_1,\dots,2k_d\}}.
\end{equation*}
We have
\begin{gather*}
  \Fc_{\{2k_1,\dots 2k_d\}}\subseteq\Fc_{\{2k'_1,\dots 2k'_{d'}\}}
    \quad\text{if $d\le d'$ and $2k_i\le 2k'_i$ for $i\le d$,} \\
  \Fc_{\{2k_1,\dots 2k_d\}}\Fc_{\{2k_{d+1},\dots 2k_{d+d'}\}} = \Fc_{\{2k_1,\dots 2k_{d+d'}\}},
\end{gather*}
For $f\in\mathcal{C}(\Omega,\RR)$, we define his \emph{average} on $\Omega$ by
\begin{equation*}
  \ave{f} = \int_\Omega f(\xi)\,d\mu(\xi),
\end{equation*}
where $\mu$ is the unique \pf{\GK}-invariant probability measure on $\Omega$.

\begin{Definition}\label{defDesign}
  Let $A$ be either a finite set with multiplicities of even positive integers,
  or an even positive integer.
  A \emph{weighted design} or \emph{cubature formula of strength $A$} on $\Omega$,
  or, in short a \emph{weighted \pf{A}-design}, is the data of
  a finite subset $X$ of $\Omega$ and
  a function $W:X\to]0,1]$, $\xi\mapsto W_\xi$, such that
  $\sum_{\xi\in X} W_\xi=1$ and
  \begin{equation*}
    \sum_{\xi\in X} W_\xi f(\xi) = \ave{f}, \quad \forall f\in\Fc_A.
  \end{equation*}
  A \emph{design of strength $A$} on $\Omega$, or an \emph{\pf{A}-design}, is
  a finite subset $X$ of $\Omega$ such that
  $(X,W)$ is a weighted \pf{A}-design for $W$ the constant function
  of value $\card{X}^{-1}$.
\end{Definition}

In particular, \pf{\{2\}}-designs and \pf{2}-designs are the same things,
while \pf{4}-designs are sets which are simultaneously \pf{\{4\}}-  and \pf{\{2,2\}}-designs.

The notion of \pf{\tau}-design on $\Omega=\PP(\RR^n)$
for classic alternative Voronoi spaces
coincides with the usual notion of
antipodal spherical \pf{\tau}-designs on the
Euclidean sphere of dimension $n-\nobreak1$,
as defined in \cite{DelsarteGoethalsSeidel}
(see also \cite{GoethalsSeidel}, \cite{VenkovMartinet1}, etc.)
When our alternative Voronoi space is an exterior power of the
classic alternative Voronoi space,
$\Omega$ is identified with a nonoriented Grassmannian manifold,
and the notion of \pf{\tau}-design coincides with the one defined and studied
in \cite{DesignGrassmannian} and \cite{DesignGrassmannian2}.
(Caution: the sets with multiplicities of even positive integers we use to
define designs are not directly related to the partitions of even positive
integers used to define designs on Grassmannian manifolds
in the cited papers.)

When the Lie algebra $\g\oplus\RR\,\id_V$ is also an associative algebra
(as it is the case for classic alternative Voronoi spaces),
then \pf{\tau}-designs are equivalent to
$\{2,2,\dots,2\}$-designs, where the multiplicity
of $2$ is equal to $\tau/2$.

\subsection{Criterion for eutaxy and perfection}

Although we have defined perfection and eutaxy for subsets of $V$,
the definition is adaptable to subsets of $\PP(V)$, as follows:
\begin{Definition}\label{defPerfectionAndEutaxyProjective}
  Let $Q$ be a quadratic form on $V$, and let $X\subseteq\PP(V)$
  be a nonempty finite subset.
\begin{enumerate}[(a)]
  \item $(Q,X)$ is \emph{eutactic} when, for every $ H\in\gp$,
    \begin{equation*}
      \exists [x]\in X,\ \frac{Q(x,Hx)}{Q(x)}>0
      \quad\Longrightarrow\quad
      \exists [y]\in X,\ \frac{Q(y,Hy)}{Q(y)}<0.
    \end{equation*}
  \item $(Q,X)$ is \emph{strongly eutactic} when, for every $ H\in\gp$,
    \begin{equation*}
      \sum_{[x]\in X}\frac{Q(x,Hx)}{Q(x)}=0
    \end{equation*}
  \item $(Q,X)$ is \emph{weakly perfect} when, for every $H\in\gp$,
    \begin{equation*}
      \exists c\in\RR,\ \forall [x]\in X,\ \frac{Q(x,Hx)}{Q(x)}=c
      \quad\Longrightarrow\quad
      \forall [x]\in X,\ Hx = cx.
    \end{equation*}
  \item $(Q,X)$ is \emph{perfect} when, for every $H\in\gp$,
    \begin{equation*}
      \exists c\in\RR,\ \forall [x]\in X,\ \frac{Q(x,Hx)}{Q(x)}=c
      \quad\Longrightarrow\quad
      H = c\,\id_V.
    \end{equation*}
\end{enumerate}
\end{Definition}

The main goal of this section is to prove the following result,
which has been proved by Venkov in the case of classic
alternative Voronoi spaces \cite[Proposition~6.2 and Th\'eor\`eme~6.4]{VenkovMartinet1},
and by Bachoc, Coulangeon and Nebe in the case of Grassmannians
\cite[Theorem~6.2]{DesignGrassmannian}.

\begin{Theorem}\label{thmVenkovCriterion}\
\begin{enumerate}[(i)]
  \item If $(X,W)$ is a weighted design of strength $\{2\}$ then $(Q,X)$ is eutactic.
  \item If $X$ is a design of strength $\{2\}$ then $(Q,X)$ is strongly eutactic.
  \item If $(X,W)$ is a weighted design of strength $\{2,2\}$ then $(Q,X)$ is perfect and eutactic.
\end{enumerate}
\end{Theorem}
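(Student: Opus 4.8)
The plan is to work entirely on $\Omega$ with the functions $f_H([x])=Q(x,Hx)/Q(x)$, which for $H\in\pp_1=\RR\,\id_V+\gp$ are precisely the generators of $\Fc_{\{2\}}$, and to reduce all three statements to two facts: an \emph{average-vanishing} lemma (for eutaxy) and a \emph{rigidity} step (for perfection). First I would record the standing properties: since the space is regular, $\GK$ is transitive on $\Omega$ (Proposition~\ref{propRegularAQV}), so the invariant measure $\mu$ has full support; since each $Z\in\Z$ spans $V$, the cone over $\Omega$ spans $V$; and $f_{\id_V}\equiv1$ with $\ave{f_{\id_V}}=1$, so constants lie in $\Fc_{\{2\}}$, while $f_{H+J}=f_H+f_J$.

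The crux is the claim that $\ave{f_H}=0$ for every $H\in\gp$. Because $\mu$ is $\GK$-invariant and $Q(kx)=Q(x)$ for $k\in\GK$, one checks $f_{kHk^{-1}}([x])=f_H([k^{-1}x])$, so $\ave{f_H}=\ave{f_{kHk^{-1}}}$; averaging over $k$ shows $\ave{f_H}$ depends only on $\bar H=\int_{\GK}kHk^{-1}\,dk$, which lies in $\gp$ (as $kHk^{-1}$ stays in $\g$ and is $Q$-selfadjoint) and is genuinely $\GK$-fixed. By transitivity of $\GK$ on $\Omega$ the function $f_{\bar H}$ is then \emph{constant}, equal to $c:=\ave{f_H}$, on all of $\Omega$, i.e.\ $Q(x,\bar Hx)=c\,Q(x)$ for every $[x]\in\Omega$. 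To force $c=0$ I would exploit that $\Omega$ is a \emph{$\G$-orbit}: for $J\in\g$ and $[x]\in\Omega$ we have $[\exp(tJ)x]\in\Omega$, so whenever $Q(x,Sx)$ vanishes on $\Omega$, differentiating $t\mapsto Q(\exp(tJ)x,\,S\exp(tJ)x)$ at $t=0$ shows $Q(x,(JS+SJ)x)$ also vanishes on $\Omega$. Applying this with $S=\bar H-c\,\id_V$ and $J=\bar H$, and using $\bar H^*_Q=\bar H$, gives $\norm{\bar Hx}_Q^2=c^2Q(x)$ on $\Omega$; combined with $Q(x,\bar Hx)=c\,Q(x)$, the equality case of Cauchy–Schwarz yields $\bar Hx=cx$ for all $[x]\in\Omega$, hence $\bar H=c\,\id_V$ since $\Omega$ spans $V$. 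But $c\,\id_V\in\g$ forces $c=0$ (cf.\ the final remark of Definition~\ref{defGroup}), proving the claim.

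Given this, (i) and (ii) are immediate. For a weighted $\{2\}$-design, $f_H\in\Fc_{\{2\}}$ gives $\sum_{\xi}W_\xi f_H(\xi)=\ave{f_H}=0$; with all $W_\xi>0$, if some $f_H(\xi)>0$ then some other term must be $<0$, which is eutaxy. For an unweighted $\{2\}$-design the same identity reads $\sum_{[x]\in X}Q(x,Hx)/Q(x)=0$ for all $H\in\gp$, which is strong eutaxy. For (iii), eutaxy follows from (i) because $\Fc_{\{2\}}\subseteq\Fc_{\{2,2\}}$. For perfection, suppose $H\in\gp$ and $f_H\equiv c$ on $X$; set $S=H-c\,\id_V\in\pp_1$, so $f_S$ vanishes on $X$. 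Then $f_S^2\in\Fc_{\{2\}}\Fc_{\{2\}}=\Fc_{\{2,2\}}$, and the design identity gives $\ave{f_S^2}=\sum_\xi W_\xi f_S^2(\xi)=0$; since $f_S^2\ge0$ and $\mu$ has full support, $f_S\equiv0$ on $\Omega$, i.e.\ $Q(x,Hx)=c\,Q(x)$ for all $[x]\in\Omega$. Rerunning the propagation step with $J=H$ gives $\norm{Hx}_Q^2=c^2Q(x)$, and Cauchy–Schwarz equality again forces $Hx=cx$ on $\Omega$, whence $H=c\,\id_V$ because $\Omega$ spans $V$.

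The hard part is the average-vanishing lemma, precisely because $V$ need not be $\GK$-irreducible (for instance in the duality example), so one cannot simply invoke Schur's lemma to make $\int_\Omega p_\xi\,d\mu$ scalar. The device that makes the general case go through is the propagation argument built from the $\G$-invariance of the closed set $\Omega$, together with the structural fact that $\id_V\notin\g$. In writing this out I would be careful to justify that $kHk^{-1}$ preserves $\gp$ (so that $\bar H\in\gp$), that the differentiation along $\exp(tJ)$ is legitimate since $\Omega$ is closed and $\G$-stable, and that $\mu$ has full support on $\Omega$ (so that $\ave{f_S^2}=0$ indeed forces $f_S\equiv0$).
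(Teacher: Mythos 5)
Your proof is correct and follows essentially the same route as the paper: your average-vanishing claim is the paper's Lemma~\ref{lemAverageQuadraticFunction} (obtained there too by averaging $H$ over $\GK$ to get an element of $\gp$ and invoking transitivity of $\GK$ on $\Omega$), your propagation-plus-Cauchy--Schwarz rigidity step is an equivalent rewriting of Lemma~\ref{lemNoConstantFunction} (the paper differentiates along $\exp\bigl(t(H-c)\bigr)$ and reads off $Q\bigl((H-c)y,(H-c)y\bigr)=0$ directly, which is the same computation since $Q\bigl(x,(\bar H^2-c\bar H)x\bigr)=Q\bigl((\bar H-c)x,(\bar H-c)x\bigr)$ once $Q(x,\bar Hx)=cQ(x)$), and parts (i)--(iii) then proceed exactly as in the paper, including the nonnegative function $f_S^2\in\Fc_{\{2,2\}}$ for perfection. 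Your explicit justification that $\mu$ has full support (so that $\ave{f_S^2}=0$ forces $f_S\equiv0$ on $\Omega$) is a small point the paper leaves implicit, and it is indeed supplied by the transitivity of the compact group $\GK$.
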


For this, we first need some lemmas:

\begin{Lemma}\label{lemNoConstantFunction}
  Let $H\in\gp$ and let $f([x])=Q(x,Hx)/Q(x)$.
  If $f$ is constant on $\Omega$, then $H=0$.
\end{Lemma}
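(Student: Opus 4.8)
The plan is to reduce the statement to showing that the $Q$-selfadjoint operator $S:=H-c\,\id_V$ vanishes, where $c$ denotes the constant value of $f$, on the cone $C=\{x\in V\smz\mid[x]\in\Omega\}$ over $\Omega$. By hypothesis $Q(x,Hx)=c\,Q(x)$ for every $x\in C$, that is $Q(x,Sx)=0$ on $C$. The naive idea of polarizing this quadratic relation to deduce $S=0$ fails, because $\Omega$ is in general a proper submanifold of $\PP(V)$ (for instance a Grassmannian), so $C$ is not stable under addition. Overcoming this is the crux of the argument, and it is exactly where regularity enters.

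First I would exploit that, by regularity, $\Omega$ is a single orbit of $\G$ acting on $\PP(V)$, and that $H\in\gp\subseteq\g$, so that $\exp(tH)\in\G^0\subseteq\G$ and hence $\exp(tH)x\in C$ for every $x\in C$ and every $t\in\RR$. Consequently the identity $Q(\exp(tH)x,H\exp(tH)x)=c\,Q(\exp(tH)x)$ holds for all $t$. Differentiating at $t=0$, and using that $H$ is $Q$-selfadjoint (so that $Q(Hx,Hx)=Q(x,H^2x)$ and $\frac{d}{dt}Q(\exp(tH)x)\big|_{t=0}=2Q(x,Hx)$), one obtains $Q(x,H^2x)=c^2\,Q(x)$ for all $x\in C$.

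Combining the two relations yields, for every $x\in C$,
\[
  Q(Sx,Sx)=Q(x,S^2x)=Q(x,H^2x)-2c\,Q(x,Hx)+c^2Q(x)=c^2Q(x)-2c^2Q(x)+c^2Q(x)=0.
\]
Since $Q$ is positive definite, this forces $Sx=0$ for every $x\in C$. Finally, every $Z\in\Z$ spans $V$ and satisfies $Z\subseteq C$ (because $[Z]\subseteq\Omega$), so $C$ spans $V$; therefore $S=0$, i.e.\ $H=c\,\id_V$. As the only scalar operator lying in $\g$ is $0$ (otherwise $\exp(tc\,\id_V)=e^{tc}\id_V\in\G^0$ would give $e^{tc}Z\in\Z$ for all $t$, contradicting the normalization $cZ\in\Z,\ c>0\Rightarrow c=1$), we conclude $c=0$ and $H=0$.

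The one genuinely non-formal point, and the main obstacle, is the second paragraph: recognizing that differentiating the defining identity along the flow of $H$ \emph{itself}, rather than along arbitrary tangent directions of $\Omega$, produces precisely the ``second moment'' $Q(x,H^2x)$ needed to complete the square $Q(Sx,Sx)$. Everything else is formal once one has the stability of $C$ under $\exp(tH)$, which regularity together with $H\in\g$ guarantees.
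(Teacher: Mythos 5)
Your proof is correct and follows essentially the same route as the paper's: both exploit the invariance of $\Omega$ under the flow $t\mapsto\exp(tH)$ (projectively the same as $t\mapsto\exp\bigl(t(H-c\,\id_V)\bigr)$), differentiate the constancy relation at $t=0$, and deduce $Q\bigl((H-c\,\id_V)x,(H-c\,\id_V)x\bigr)=0$ on the cone over $\Omega$, whence $H=c\,\id_V$ by spanning and $c=0$ by the normalization of the space. Your completion of the square via the second-moment identity $Q(x,H^2x)=c^2Q(x)$ is merely a cosmetic reorganization of the paper's computation, which shifts the exponential onto one argument and differentiates $Q\bigl(\exp(2t(H-c\,\id_V))y,(H-c\,\id_V)y\bigr)=0$ directly.
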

\begin{proof}
  First, we reformulate the assumption of the lemma as follows:
  There exists a constant $c\in\RR^*$ such that
  \begin{equation*}
    Q\bigl(x,(H-c)x\bigr) = 0 \quad \forall [x]\in\Omega.
  \end{equation*}
  Now, for $[y]\in\Omega$ and $t\in\RR$, we have
  \begin{equation*}
    \bigl[\exp(t(H-c))\,y\bigr] = \bigl[e^{-tc}\exp(tH)\,y\bigr]
    =\bigl[\exp(tH)\,y\bigr]\in\Omega,
  \end{equation*}
  because $\exp(tH)$ is an element of $\G$ and $\Omega$ is \pf{\G}-invariant.
  Therefore we have
  $Q\bigl(\exp(t(H-c))\,y,\rbreak
  (H-\nobreak c)\*\exp(t(H-c))\*\,y\bigr)=0$,
  or, using $H^*_Q=H$,
  \begin{equation*}
    Q\bigl(\exp(2t(H-c))y,(H-c)y\bigr)=0, \quad
    \forall y\in\Omega,\ t\in\RR.
  \end{equation*}
  Differentiating this expression in $t$ and setting $t=0$, we obtain
  $2Q\bigl((H-c)y,\rbreak (H-c)y\bigr)=0$,
  therefore $Hy=cy$ for every $y\in\Omega$.
  As $\Omega$ spans $V$, we have $H=c\,\id_V$,
  and since $\RR\,\id_V\cap\gp=\{0\}$, we have $c=0$.
\end{proof}

\begin{Lemma}\label{lemAverageQuadraticFunction}
  Let $H\in\gp$ and let $f([x])=Q(x,Hx)/Q(x)$.
  Then $\ave{f}=0$.
\end{Lemma}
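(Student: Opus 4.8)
The plan is to exploit the $\GK$-invariance of $\mu$ together with the transitivity of $\GK$ on $\Omega$ (Proposition~\ref{propRegularAQV}), in order to replace $H$ by its average over $\GK$ and then invoke the rigidity already established in Lemma~\ref{lemNoConstantFunction}. Write $f_H([x])=Q(x,Hx)/Q(x)$, and note that both $H\mapsto f_H$ and $H\mapsto\ave{f_H}$ are linear in $H\in\gp$.

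First I would record the behaviour of $f_H$ under $\GK$. For $k\in\GK$ we have $k^*_Q=k^{-1}$, so $Q(kx,Hkx)=Q(x,k^{-1}Hkx)$ and $Q(kx)=Q(x)$; hence $f_H([kx])=f_{k^{-1}Hk}([x])$. Since $\mu$ is $\GK$-invariant, integrating over $\Omega$ gives $\ave{f_H}=\ave{f_{k^{-1}Hk}}$ for every $k\in\GK$. Averaging this identity against the Haar probability measure $dk$ of the compact group $\GK$ and using the linearity of $H\mapsto\ave{f_H}$, I obtain $\ave{f_H}=\ave{f_{\bar H}}$, where $\bar H=\int_{\GK}k^{-1}Hk\,dk$.

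The point of this reduction is that $\bar H$ is invariant under conjugation by $\GK$. I would then check that $\bar H\in\gp$: each conjugate $k^{-1}Hk$ lies in $\g$ (the Lie algebra is stable under $\mathrm{Ad}(\G)$, in particular under $\mathrm{Ad}(\GK)$) and is $Q$-selfadjoint, because $k$ is $Q$-orthogonal and $H$ is $Q$-selfadjoint, so $k^{-1}Hk\in\gp$; as $\gp$ is a closed linear subspace, the average $\bar H$ remains in $\gp$. Since $f_{\bar H}([kx])=f_{k^{-1}\bar Hk}([x])=f_{\bar H}([x])$, the function $f_{\bar H}$ is $\GK$-invariant, hence constant on $\Omega$ by the transitivity of $\GK$ on $\Omega$. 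Lemma~\ref{lemNoConstantFunction} then forces $\bar H=0$, and therefore $\ave{f_H}=\ave{f_{\bar H}}=\ave{f_0}=0$.

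The only genuinely substantive step is this averaging trick; the remaining verifications---that conjugation by $\GK$ preserves $\gp$ and that $f_{\bar H}$ is constant---are routine consequences of $k^*_Q=k^{-1}$ and of transitivity. The main point to be careful about is justifying that the $\GK$-average $\bar H$ lands back in $\gp$ (rather than merely in $\g$), so that the rigidity of Lemma~\ref{lemNoConstantFunction} may legitimately be applied.
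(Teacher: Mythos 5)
Your proof is correct and follows essentially the same route as the paper: both arguments average $H$ over conjugation by the compact group, producing $\bar H=\int_{\GK}k^{-1}Hk\,dk\in\gp$, and then invoke Lemma~\ref{lemNoConstantFunction} to conclude $\bar H=0$ and hence $\ave{f}=0$. The only cosmetic difference is that the paper identifies $\mu$ with the pushforward of Haar measure along an orbit map and reads off $\ave{f}=Q(y,\bar Hy)/Q(y)$ directly, whereas you keep the integral over $\Omega$ and deduce the constancy of $f_{\bar H}$ from $\GK$-invariance and transitivity.
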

\begin{proof}
  Using the transitivity of $\GK$ on $\Omega$ (Proposition~\ref{propRegularAQV}), we get, for any $[y]\in\Omega$,
  \begin{align*}
    \ave{f}
      &= \int_\Omega \frac{Q(x,Hx)}{Q(x)}d\mu([x])
      =  \int_{\GK} \frac{Q(gy,Hgy)}{Q(gy)}dg \\
      &= \int_{\GK}\frac{Q(y,g^{-1}Hgy)}{Q(y)}dg
      =  \frac{Q\bigl(y,\int_{\GK}g^{-1}Hg\,dg\,y\bigr)}{Q(y)} \\
      &=  \frac{Q(y,H^{\GK}y)}{Q(y)},
  \end{align*}
  where $H^{\GK}=\int_{\GK}g^{-1}Hg\,dg$ is an element of $\gp$.
  Applying Lemma~\ref{lemNoConstantFunction} to $H^{\GK}$, we get $H^{\GK}=0$;
  it follows that $\ave{f}=0$.
\end{proof}

\begin{proof}[Proof of Theorem~\ref{thmVenkovCriterion}]
  (i) Suppose that $(X,W)$ is a weighted design of strength $\{2\}$.
  Let $H\in\gp$, and let $f([x])=Q(x,Hx)/Q(x)$. As~$\ave{f}=0$
  by Lemma~\ref{lemAverageQuadraticFunction}, we get
  \begin{equation*}
    \sum_{x\in X} W_x\frac{Q(x,Hx)}{Q(x)}=0.
  \end{equation*}
  This implies that, if there exists an $x\in X$ such that $Q(x,Hx)/Q(x)>0$,
  then there exists an $y\in X$ such that $Q(y,Hy)/Q(y)<0$,
  that is $(Q,X)$ is eutactic.

  (ii) The argument is the same as in the proof of Claim~(i).

  (iii) Suppose that $(X,W)$ is a weighted design of strength $\{2,2\}$.
  As it is in particular a weighted design of strength $\{2\}$,
  $(Q,X)$ is eutactic by Claim~(i).
  It remains to show that $(Q,X)$ is perfect.

  Let $H\in\gp$, and suppose that there exists a $c\in\RR$
  such that $Q(x,Hx)/Q(x)=c$ for every $x\in X$.
  Consider the nonnegative function $f\in\Fc_{\{2,2\}}$ defined by
  \begin{equation*}
    f([x]) = \biggl(\frac{Q(x,Hx)}{Q(x)}-c\biggr)^2.
  \end{equation*}
  We have
  \begin{equation*}
    \int_{\Omega} f(x)\,dx = \sum_{x\in X} W_xf(x) = 0,
  \end{equation*}
  and therefore $f(x)=0$ for every $x\in\Omega$, that is
  \begin{equation*}
    \frac{Q(x,Hx)}{Q(x)} = c,\quad \forall x\in\Omega.
  \end{equation*}
  By Lemma~\ref{lemNoConstantFunction}, $H=0$ (and $c=0$).
  Thus, $(Q,X)$ is perfect.
\end{proof}
As a direct consequence of Theorem~\ref{thmVenkovCriterion},
using Theorem~\ref{thmVoronoiBis}, we get:

\begin{Corollary}[(Venkov criterion)]\label{corPerfectionCriterion}
  Let $\quartet{V}{Q}{\Z}{\G}$ be an alternative Vo\-ro\-noi space,
  and let $Z\in\Z$.
  If the minimum layer $Z_{\min}$ of $Z$ is a \pf{\{2,2\}}-design,
  then $Z$ is strictly extreme.
\end{Corollary}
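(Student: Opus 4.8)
The plan is to chain Theorem~\ref{thmVenkovCriterion}(iii) with the alternative Voronoi characterization of strict extremality, Theorem~\ref{thmVoronoiBis}(i). First I would observe that the hypothesis ``$Z_{\min}$ is a \pf{\{2,2\}}-design'' means precisely that its projective image $[Z_{\min}]$ is a design of strength $\{2,2\}$ in the sense of Definition~\ref{defDesign}; this makes sense because $[Z]\subseteq\Omega$ for every $Z\in\Z$, so in particular $[Z_{\min}]\subseteq\Omega$, and $[Z_{\min}]$ is finite since a layer of a discrete set meets the compact level set $\{Q(x)=\gamma(Z)\}$ in finitely many points. Every design is a weighted design carrying the constant weight $W_\xi=\card{[Z_{\min}]}^{-1}$, so Theorem~\ref{thmVenkovCriterion}(iii) applies directly and yields that $(Q,[Z_{\min}])$ is perfect and eutactic in the projective sense of Definition~\ref{defPerfectionAndEutaxyProjective}.

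The remaining step is to identify this projective perfection and eutaxy with the affine perfection and eutaxy of $(Q,Z_{\min})$ that Definition~\ref{defPerfectionAndEutaxyBis} and Theorem~\ref{thmVoronoiBis} invoke. I would argue as follows. Both $Q(x,Hx)$ and $Q(x)$ are homogeneous of degree~$2$ in $x$, so the signs entering the eutaxy condition and the ratios $Q(x,Hx)/Q(x)$ entering the perfection condition depend only on the class $[x]$; moreover $Z_{\min}$ contains no two proportional vectors beyond antipodal pairs, since $Q(\lambda x)=\lambda^2Q(x)=Q(x)$ forces $\lambda=\pm1$, and antipodal vectors contribute identical values. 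Hence passing from $Z_{\min}$ to $[Z_{\min}]$ loses no information. Finally, by the remark following Definition~\ref{defPerfectionAndEutaxy} one may test perfection and eutaxy using only $H\in\gp$, and for such $H$ the relation $H^*_Q=H$ collapses the affine perfection conclusion $H+H^*_Q=2c\,\id_V$ to $H=c\,\id_V$, which is exactly the projective conclusion. Thus $(Q,Z_{\min})$ is perfect and eutactic, i.e.\ $Z$ is perfect and eutactic, and Theorem~\ref{thmVoronoiBis}(i) delivers the strict extremality of~$Z$.

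There is no substantive obstacle here; the corollary really is a direct consequence, and the only care required is the translation between the two settings. The delicate point, such as it is, lies in checking that restricting the test parameter $H$ to $\gp$ faithfully bridges the projective perfection condition (in which designs are defined) and the affine perfection condition (in which the Voronoi characterization was established)---this is the single place where the self-adjointness $H^*_Q=H$ does real work.
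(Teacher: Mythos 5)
Your proof is correct and follows exactly the paper's route: the paper derives Corollary~\ref{corPerfectionCriterion} as a direct consequence of Theorem~\ref{thmVenkovCriterion}(iii) combined with Theorem~\ref{thmVoronoiBis}(i), which is precisely your chain. The translation you spell out between the projective notions of Definition~\ref{defPerfectionAndEutaxyProjective} and the affine ones of Definitions~\ref{defPerfectionAndEutaxy}--\ref{defPerfectionAndEutaxyBis} (homogeneity, antipodal identification, and testing only $H\in\gp$ with $H^*_Q=H$) is left implicit in the paper but is the right, and routine, bookkeeping.
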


In order to apply this criterion, we need some method for
checking easily that a given finite set of~$V$ provide a design.
In the next paragraph,
we give a criterion exploiting the symmetry of the set to be checked.

\subsection{Designs invariant under finite group}\label{parDesignInvariant}

Let $\quartet{V}{Q}{\Z}{\G}$ be a regular alternative Voronoi space,
and $\Omega$, $\Fc_A$, etc., as above.
For $A$ a set with multiplicities of even positive integers, or
an even positive integer, and for $\mathbf{F}$ a closed subgroup
of $\GK$ we note $\Fc_A^\mathbf{F}$ the subspace of \pf{A}-invariant
elements of~$\Fc_A$, that is
\begin{equation*}
  \Fc_A^\mathbf{F} = \{f\in\Fc_A \mid f\circ g = f,\ \forall g\in\mathbf{F} \}.
\end{equation*}
The technique of the following proposition
has already been used by Sobolev
in order to compute cubature formulas on spheres
\cite{Sobolev}; see also \cite[Chap.~2, \S~2, Theorem~2.3]{SobolevBook}.

\begin{Proposition}\label{propInvarianceSet}
  Let $\quartet{V}{Q}{\Z}{\G}$ be a regular alternative Voronoi space.
  Let $\mathbf{F}$ be a finite subgroup of $\GK$,
  let $X$ be a finite subset of $\Omega$,
  and $W:X\to\RR$ a function with $W(\xi)=W_\xi>0$ for all $\xi\in X$ and
  $\sum_{\xi\in X}W_\xi=1$.
  Suppose that $(X,W)$ is invariant under the action of $\mathbf{F}$, that is
  $g\xi\in X$
  and $W_{g\xi}=W_\xi$ for $g\in \mathbf{F}$ and $\xi\in X$.
  
  Then $(X,W)$ is a weighted \pf{A}-design if and only if
  \begin{equation*}
    \sum_{\xi\in X} W_\xi f(\xi) = \ave{f},
    \quad \forall f\in\Fc_A^\mathbf{F}.
  \end{equation*}
\end{Proposition}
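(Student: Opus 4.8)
The forward implication is immediate: if $(X,W)$ is a weighted $A$-design, the cubature identity holds for every $f\in\Fc_A$, hence in particular for every $f\in\Fc_A^{\mathbf{F}}$. The substance of the proposition is the converse, and the plan is to use the averaging (Reynolds) operator attached to the finite group $\mathbf{F}$. For $f\in\Fc_A$ I would set $\bar f = \frac{1}{\card{\mathbf{F}}}\sum_{g\in\mathbf{F}} f\circ g$, and show that $\bar f\in\Fc_A^{\mathbf{F}}$ and that passing from $f$ to $\bar f$ alters neither the discrete side $\sum_{\xi\in X}W_\xi f(\xi)$ nor the continuous side $\ave{f}$ of the cubature identity. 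Granting this, the assumed identity for the invariant function $\bar f$ immediately yields the identity for the arbitrary $f\in\Fc_A$, which finishes the proof.

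The first and least trivial point is that $\Fc_A$ is stable under composition with any $g\in\GK$, so that $\bar f$ again lies in $\Fc_A$ (and is visibly $\mathbf{F}$-invariant). To see this I would unwind the definition: a generator of $\Fc_A$ has the form $x\mapsto\prod_i Q(x,H_ix)/Q(x)$ with $H_i\in\pp_{k_i}$, and composing with $g\in\GK$ turns it, using $Q(gx)=Q(x)$ and $g^*_Q=g^{-1}$, into $x\mapsto\prod_i Q(x,g^{-1}H_igx)/Q(x)$. It therefore suffices to check that each $\pp_k$ is invariant under conjugation by $\GK$. This follows inductively: $\gp$ is $\GK$-stable because $\GK\subseteq\G$ normalises $\g$ and conjugation by a $Q$-orthogonal $g$ preserves $Q$-selfadjointness, $\RR\,\id_V$ is central, and each $\pp_k$ is spanned by symmetrised products of elements of $\pp_1=\RR\,\id_V+\gp$, a structure preserved by the associative-algebra automorphism $A\mapsto g^{-1}Ag$ of $\End(V)$.

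The remaining two points are routine. The average $\ave{\cdot}$ is $\GK$-invariant because $\mu$ is the unique $\GK$-invariant probability measure on $\Omega$ (Proposition~\ref{propRegularAQV}), whence $\ave{\bar f}=\ave{f}$. Finally, the $\mathbf{F}$-invariance of $(X,W)$ makes the discrete sum insensitive to averaging: for each fixed $g\in\mathbf{F}$ the map $\xi\mapsto g\xi$ is a bijection of $X$ onto itself with $W_{g\xi}=W_\xi$ (equivalently $W_{g^{-1}\eta}=W_\eta$), so the substitution $\eta=g\xi$ gives $\sum_{\xi\in X}W_\xi f(g\xi)=\sum_{\eta\in X}W_\eta f(\eta)$, and averaging over $g$ yields $\sum_{\xi}W_\xi\bar f(\xi)=\sum_{\xi}W_\xi f(\xi)$. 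None of these steps is deep; the only one calling for genuine care is the stability of $\Fc_A$ under $\GK$, which is where I expect the main (though still elementary) work to lie, since it relies on the inductive structure of the spaces $\pp_k$.
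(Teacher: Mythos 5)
Your proof is correct and follows essentially the same route as the paper: both reduce the cubature identity for an arbitrary $f\in\Fc_A$ to the $\mathbf{F}$-invariant case via the averaging operator over $\mathbf{F}$, using the invariance of $(X,W)$ on the discrete side and the $\GK$-invariance of $\mu$ on the continuous side. The only differences are to your credit: you work with the normalized average $\bar f$ (the paper's unnormalized $f^{\mathbf{F}}=\sum_{g\in\mathbf{F}}f\circ g$ makes its displayed identity off by a harmless factor $\card{\mathbf{F}}$), and you actually verify that $\Fc_A$ is stable under composition with $\GK$, via the conjugation-invariance of the spaces $\pp_k$, a point the paper asserts without justification when it claims $f^{\mathbf{F}}\in\Fc_A$.
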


\begin{proof}
  Let $f\in\Fc_A$.
  The invariance of $(X,W)$ implies that
  \begin{equation*}
    \sum_{\xi\in X}W_\xi f(\xi) = \sum_{\xi\in X}W_\xi f(g\xi), \quad \forall g\in \mathbf{F}.
  \end{equation*}
  Taking the sum on all elements $g\in \mathbf{F}$, we obtain
  \begin{equation*}
    \sum_{\xi\in X}W_\xi f(\xi) = \sum_{\xi\in X}W_\xi f^\mathbf{F}(\xi), \quad
    \text{where}\  f^{\mathbf{F}} = \sum_{g\in \mathbf{F}} f\circ g.
  \end{equation*}
  In the same way, the invariance of the measure $\mu$
  under the action of $\mathbf{F}\subseteq\GK$
  implies that
  \begin{equation*}
    \ave{f} = \ave{f^\mathbf{F}}.
  \end{equation*}
  Therefore, we have
  \begin{equation*}
    \sum_{\xi\in X} W_\xi f(\xi) = \ave{f},
    \quad\text{iff}\quad
    \sum_{\xi\in X} W_\xi f^\mathbf{F}(\xi) = \ave{f^\mathbf{F}}.   
  \end{equation*}
  The function $f^\mathbf{F}$ is an element of $\Fc_A$,
  which is invariant under
  the action of~$\mathbf{F}$.
  It follows that, in order to have
  the equality $\sum_{\xi\in X} W_\xi\* f(\xi) = \ave{f}$
  for every $f\in\Fc_A$, it suffices to check it
  for every $f\in\Fc_A^\mathbf{F}$.
\end{proof}

As a particular case of the last proposition,
we have:

\begin{Corollary}\label{corInvarianceDesign}
  Under the assumptions of
  Proposition~\ref{propInvarianceSet},
  if the only \pf{\mathbf{F}}-invariant functions in $\Fc_A$
  are the constant functions,
  then $(X,W)$ is a weighted \pf{A}-design.
\end{Corollary}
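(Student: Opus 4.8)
The plan is to invoke Proposition~\ref{propInvarianceSet}, which has already done all the substantive work: it tells us that $(X,W)$ is a weighted \pf{A}-design precisely when the cubature identity $\sum_{\xi\in X} W_\xi f(\xi)=\ave{f}$ holds for every $f\in\Fc_A^{\mathbf{F}}$, rather than for every $f\in\Fc_A$. Thus it suffices to verify this identity only for $\mathbf{F}$-invariant functions, and under the hypothesis of the corollary those functions are exactly the constant functions.

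First I would reduce to checking a single constant function. If $f\in\Fc_A^{\mathbf{F}}$, then by assumption $f$ is constant, say $f\equiv c$ for some $c\in\RR$. For such an $f$ the left-hand side of the cubature identity is
\begin{equation*}
  \sum_{\xi\in X} W_\xi f(\xi) = c\sum_{\xi\in X} W_\xi = c,
\end{equation*}
using the normalization $\sum_{\xi\in X} W_\xi=1$ from the hypotheses of Proposition~\ref{propInvarianceSet}. On the other hand, since $\mu$ is a probability measure on $\Omega$, the average of a constant is that constant:
\begin{equation*}
  \ave{f} = \int_\Omega c\,d\mu(\xi) = c.
\end{equation*}
Hence $\sum_{\xi\in X} W_\xi f(\xi)=\ave{f}$ holds for every $f\in\Fc_A^{\mathbf{F}}$.

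Applying Proposition~\ref{propInvarianceSet}, I conclude that $(X,W)$ is a weighted \pf{A}-design. There is no genuine obstacle here: the corollary is a direct specialization of the proposition, the only point being the elementary observation that both the weighted sum and the average reproduce a constant exactly, by the normalizations $\sum W_\xi=1$ and $\mu(\Omega)=1$. The real content has already been absorbed into Proposition~\ref{propInvarianceSet}, whose averaging argument over $\mathbf{F}$ is what makes the restriction to $\Fc_A^{\mathbf{F}}$ legitimate.
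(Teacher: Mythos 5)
Your proof is correct and follows exactly the route the paper intends: the corollary is stated there as a direct specialization of Proposition~\ref{propInvarianceSet}, with the only content being the trivial verification that the cubature identity holds for constants, which you carry out via the normalizations $\sum_{\xi\in X}W_\xi=1$ and $\mu(\Omega)=1$. Nothing is missing.
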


In particular, using
the Venkov criterion of extremality (Corollary~\ref{corPerfectionCriterion}),
we have:

\begin{Corollary}\label{corInvarianceCriterion}
  Let $\quartet{V}{Q}{\Z}{\G}$ be an alternative Voronoi space,
  and let $Z\in\Z$. Let $\mathbf{F}=\{g\in \GK\mid gZ=Z\}$.
  If the only \pf{\mathbf{F}}-invariant functions in $\Fc_{\{2,2\}}$
  are the constant functions, then $Z$ is strictly extreme.
\end{Corollary}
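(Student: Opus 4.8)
The statement is a direct specialization of the invariance criterion for designs, so the plan is to verify the hypotheses of Corollary~\ref{corInvarianceDesign} for the minimal layer and then run the chain Corollary~\ref{corInvarianceDesign} $\to$ Theorem~\ref{thmVenkovCriterion}(iii) $\to$ Theorem~\ref{thmVoronoiBis}(i). Concretely, I would take $X=[Z_{\min}]\subseteq\Omega$, the set of projective classes of the minimal vectors of $Z$, and let $W$ be the constant weight $W_\xi=\card{X}^{-1}$, which satisfies $\sum_{\xi\in X}W_\xi=1$ and $0<W_\xi\le1$ as required by Definition~\ref{defDesign}. Since the hypothesis of the corollary concerns $\Fc_{\{2,2\}}$, we are in a regular alternative Voronoi space, so $\Omega$ and the $\GK$-invariant measure are available; and $X\subseteq\Omega$ because $[Z_{\min}]\subseteq[Z]\subseteq\Omega$.

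Before applying Corollary~\ref{corInvarianceDesign}, I must check the assumptions of Proposition~\ref{propInvarianceSet}, namely that $\mathbf{F}$ is a finite subgroup of $\GK$ and that $(X,W)$ is $\mathbf{F}$-invariant. The layer $Z_{\min}$ is finite, being the intersection of the closed discrete set $Z$ with a sphere $\{Q=\gamma(Z)\}$, so $X$ is finite. The invariance of $(X,W)$ is immediate: for $g\in\mathbf{F}$ and $x\in Z_{\min}$ we have $gx\in Z$ and $Q(gx)=Q(x)=\gamma(Z)$ since $g$ is \pf{Q}-orthogonal, hence $gx\in Z_{\min}$, so $g$ permutes $X$; and $W$ is constant, so $W_{g\xi}=W_\xi$. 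The finiteness of $\mathbf{F}$ is the one point requiring a small argument, and is the main (mild) obstacle: I would fix $R>0$ large enough that $S=\{x\in Z\mid Q(x)\le R\}$ spans $V$ (possible because $Z$ spans $V$), note that $S$ is finite for the same reason as above, and observe that every $g\in\mathbf{F}\subseteq\GK$ preserves both $Z$ and the norm $Q$, hence preserves $S$. The resulting homomorphism $\mathbf{F}\to\mathrm{Sym}(S)$ is injective, since an element fixing $S$ pointwise fixes a spanning family and is therefore the identity; thus $\mathbf{F}$ embeds in a finite symmetric group and is finite.

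With these hypotheses in hand, Corollary~\ref{corInvarianceDesign} applies with $A=\{2,2\}$: the assumption that the only \pf{\mathbf{F}}-invariant functions in $\Fc_{\{2,2\}}$ are the constants forces $(X,W)$ to be a weighted \pf{\{2,2\}}-design. Theorem~\ref{thmVenkovCriterion}(iii) then yields that $(Q,X)$ is perfect and eutactic in the projective sense of Definition~\ref{defPerfectionAndEutaxyProjective}. Since the perfection and eutaxy conditions only involve the scale-invariant quantities $Q(x,Hx)/Q(x)$, this is equivalent to $(Q,Z_{\min})$ being perfect and eutactic, that is, to $Z$ being perfect and eutactic in the sense of Definition~\ref{defPerfectionAndEutaxyBis}. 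Finally, Theorem~\ref{thmVoronoiBis}(i) gives that $Z$ is strictly extreme, which is the claim.
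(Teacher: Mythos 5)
Your proof is correct and follows essentially the same route as the paper, which obtains the corollary by applying Corollary~\ref{corInvarianceDesign} to $X=[Z_{\min}]$ with constant weights and then invoking the Venkov criterion (Corollary~\ref{corPerfectionCriterion}, which you merely unwind into Theorem~\ref{thmVenkovCriterion}(iii) plus Theorem~\ref{thmVoronoiBis}(i)). The additional verifications you supply --- finiteness of $Z_{\min}$ and of $\mathbf{F}$ via its injection into the permutations of a finite spanning subset of $Z$, the $\mathbf{F}$-invariance of $(X,W)$, and the implicit regularity needed for $\Fc_{\{2,2\}}$ and $\Omega$ to make sense --- are exactly the details the paper leaves tacit, and they are all sound.
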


For example, let $\Quartet{(\RR^n)^{\wedge m}}{Q}{\Z_{n,m}}{\SL^{\pm}(n,\RR)}$ be the $m$th exterior power
of the classic alternative Voronoi space of rank~$n$
(Example~\ref{exExteriorAQV} of Paragraph~\ref{parExamplesBis}).
Recall that the elements of $\Z_{n,m}$ are of the form
\begin{equation*}
  Z(\Lambda,m) = \bigl\{x_1\wedge\dots\wedge x_m \bigm|
  \text{$x_1, \dots,x_m$ are linearly independent elements of $\Lambda$}\bigr\},
\end{equation*}
where $\Lambda$ is a Euclidean lattice in $\RR^n$ of determinant~$1$;
and the space $\Omega_{n,m}=\overline{[\bigcup_{Z\in\Z_{n,m}}Z]}$
is identified to the space of \pf{m}-dimensional subspaces of $\RR^n$.
We denote by $\Fc_\tau(\Omega_{n,m})=\Fc_\tau$ the corresponding spaces
of function on $\Omega_{n,m}$.
Consider the following list of Euclidean lattices in $\RR^n$
(the subscript indicates the dimension of the ambient space),
which is taken from \cite{Bachoc} (see also \cite{VenkovMartinet1}):
\begin{multline*}
  \mathbf{A}_2,\ \mathbf{D}_4=\mathit{BW}_4,\
  \mathbf{E}_6,\ \mathbf{E}_7,\ \mathbf{E}_8=\mathit{BW}_8,\\
  Q_{14},\ \Lambda_{16}=\mathit{BW}_{16},\ O_{16},\
  O_{22},\ \Lambda_{22},\ \Lambda_{22}[2],\ M_{22}, M_{22}[5],\\
  O_{23},\ \Lambda_{23},\ M_{23},\ M_{23}[2],\ \Lambda_{24},\
  \text{and}\ \mathit{BW}_{2^k}, k\ge2,
\end{multline*}
where $\mathbf{A}_n,$ $\mathbf{D}_n$, $\mathbf{E}_n$ denote root lattices,
$\Lambda_n$ denotes laminated lattices, and
$\mathit{BW}_n$ denotes the Barnes-Wall lattices.
It is known that the automorphism group of these lattices
has no nonconstant invariant polynomial in
$\Fc_4(\Omega_{n,m})$ for every $m\le n/2$ (see \cite{Bachoc});
therefore, all these lattices and their duals provide strictly
extreme sets for the alternative Voronoi spaces
$\Quartet{(\RR^n)^{\wedge m}}{Q}{\Z_{n,m}}{\SL^{\pm}(n,\RR)}$.

\section{Epstein zeta function}\label{sectEpstein}

Hitherto, we have considered extremality relatively to the Hermite function;
now, we turn towards extremality relatively to the Epstein zeta function
as defined below.
Delone and Ryshkov have formulated
a characterization of the so-called \emph{final \pf{\zeta}-extremality} \cite{DeloneRyshkov}
in what we call classic Voronoi spaces.
This characterization
has some resemblance with Voronoi characterization of extremality,
as it involves the notions of eutaxy and perfection.
Likewise, a criterion of \pf{\zeta}-extremality using designs
resembling to the criterion of Venkov for usual extremality
has been found by Coulangeon~\cite{CoulangeonEpstein}.

The goal of this section is to prove both results in our
frame of Voronoi space.
In this whole section, we suppose, for simplicity,
that the Voronoi spaces $\quartet{V}{\Q}{Z}{\G}$ we consider
are antipodal, that is $Z=-Z$.
All the results are also correct for non-antipodal spaces,
and even for sets $Z$ with positive weights attached to its elements
and to the corresponding terms in the Epstein \pf{\zeta}-series,
if correct weights are set in all conditions of eutaxy and designs.

\subsection{Definitions}

Let $V$ be a vector space, $Q$ a positive definite quadratic form,
and $Z$ a subset of~$V\smz$, discrete and closed in~$V$. We define the
\emph{Epstein zeta series} of $(Q,Z)$ by
\begin{equation*}
  \zeta(Q,Z,s) = \sum_{x\in Z} Q(x)^{-s}, \quad s\in\CC.
\end{equation*}
When $Q$, respectively $Z$, is implied,
we write $\zeta(Z,s)$, respectively $\zeta(Q,s)$,
instead of $\zeta(Q,Z,s)$.

\begin{Lemma}\label{lemConvergence}
  There exists a $s_0(Z)\in[-\infty,+\infty]$ (with $s_0\ge0$ when $Z$ is infinite)
  depending only on $Z$ such that
  $\zeta(Q,Z,s)$ converges absolutely for $s\in\CC$ with $\Re s>s_0$
  and diverges for $s\in\RR$ with $s>s_0$.
\end{Lemma}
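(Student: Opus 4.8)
The plan is to identify $s_0(Z)$ with the abscissa of absolute convergence of the series and to check that it is insensitive to the choice of $Q$. The first reduction is to note that, since $Q(x)$ is a positive real for every $x\in Z$, we have $\abs{Q(x)^{-s}}=Q(x)^{-\Re s}$; hence absolute convergence of $\zeta(Q,Z,s)$ depends only on $\sigma=\Re s$ and is equivalent to convergence of the real series with nonnegative terms $\sum_{x\in Z}Q(x)^{-\sigma}$. I would then set
\begin{equation*}
  s_0(Z)=\inf\bigl\{\sigma\in\RR \bigm| \textstyle\sum_{x\in Z}Q(x)^{-\sigma}<\infty \bigr\},
\end{equation*}
with the convention $\inf\emptyset=+\infty$, and establish that this infimum has the announced properties.

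The existence of a genuine abscissa rests on two elementary facts. First, a finiteness observation: for each $r>0$ the set $\{x\in Z\mid Q(x)\le r\}$ is closed and contained in the compact ellipsoid $\{x\in V\mid Q(x)\le r\}$, hence compact; being also discrete, it is finite. In particular only finitely many $x\in Z$ satisfy $Q(x)<1$. Second, for the remaining vectors, which have $Q(x)\ge1$, the function $\sigma\mapsto Q(x)^{-\sigma}$ is nonincreasing. Consequently, if $\sum_{x\in Z}Q(x)^{-\sigma_0}$ converges for some $\sigma_0$, then it converges for every $\sigma\ge\sigma_0$, the finitely many vectors with $Q(x)<1$ contributing only a finite correction. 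This monotonicity shows that the set of $\sigma$ for which the real series converges is an upper half-line; thus $s_0(Z)\in[-\infty,+\infty]$ is well defined, the series converges absolutely for $\Re s>s_0$, and diverges for real $s<s_0$.

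The substantive point is the independence of $s_0$ from $Q$, and this I would deduce from the equivalence of positive definite quadratic forms. If $Q'$ is a second positive definite form on $V$, then compactness of the $Q'$-unit sphere furnishes constants $0<c_1\le c_2$ with $c_1Q'(x)\le Q(x)\le c_2Q'(x)$ for all $x\in V$. Raising these inequalities to the power $-\sigma$ (and interchanging $c_1$ and $c_2$ according to the sign of $\sigma$) bounds $Q(x)^{-\sigma}$ above and below by fixed positive multiples of $Q'(x)^{-\sigma}$; therefore $\sum_{x\in Z}Q(x)^{-\sigma}$ and $\sum_{x\in Z}Q'(x)^{-\sigma}$ converge for exactly the same $\sigma$, so the abscissa depends on $Z$ alone.

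It remains to prove the lower bound. If $Z$ is infinite, the finiteness observation shows that all but finitely many $x\in Z$ have $Q(x)\ge1$; then for every $\sigma\le0$ the corresponding terms satisfy $Q(x)^{-\sigma}=Q(x)^{\abs{\sigma}}\ge1$, so the series diverges and $s_0(Z)\ge0$. I expect no serious difficulty in this lemma: the one step carrying real content is the independence of $Q$, and even there the argument is just the boundedness of the ratio $Q/Q'$; the only routine care needed is the bookkeeping of the finitely many vectors of norm less than $1$ and the tracking of the sign of $\sigma$ in the comparison inequalities.
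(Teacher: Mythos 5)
Your proof is correct and follows essentially the same route as the paper: both identify $s_0(Z)$ with the critical abscissa of the real series $\sum_{x\in Z}Q(x)^{-\sigma}$ and derive its independence of $Q$ from the two-sided equivalence $c_1Q'\le Q\le c_2Q'$ of positive definite forms on a finite-dimensional space. If anything, yours is the more careful version: you account for the finitely many $x\in Z$ with $Q(x)<1$, track the sign of $\sigma$ in the comparison inequalities, and actually prove the parenthetical claim $s_0\ge0$ for infinite $Z$, all of which the paper's one-paragraph proof glosses over (the paper's statement and proof also carry sign slips --- ``diverges for $s\in\RR$ with $s>s_0$'' should read $s<s_0$, and ``$\Re s'<s$'' should read $\Re s'>s$ --- which your formulation tacitly corrects).
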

\begin{proof}
  Suppose that $\zeta(Q,Z,s)$ converges for some positive definite quadratic form $Q$
  and some real number $s$,
  and let $Q'$ be another positive definite quadratic form, and $s'\in\CC$ such that $\Re s'< s$.
  There exists $\alpha>0$ such that $Q'(x)\le\alpha Q(x)$ for every $x\in V$.
  We have
  \begin{equation*}
    \sum_{x\in Z}\abs{Q'(x)^{-s'}}
    = \sum_{x\in Z}Q'(x)^{-\Re s'}
    \le \alpha^{-s}\sum_{x\in Z}Q(x)^{-s}
    =\alpha^{-s}\zeta(Q,Z,s)<\infty.
  \end{equation*}
  So, we have the lemma with $s_0(Z)$ the supremum of the values of $s$
  for which $\zeta(Q,Z,s)$ converges.
\end{proof}

\begin{Lemma}\label{lemDifferentiable}
  Let $\quartet{V}{\Q}{Z}{\G}$ be a Voronoi space, let $s_0=s_0(Z)$ as in the previous lemma.
  Then, for every $s>s_0$, the function $Q\mapsto\zeta(Q,s)$ converges
  and is indefinitely differentiable.
\end{Lemma}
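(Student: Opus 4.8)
The plan is to establish smoothness locally in the coordinates provided by Proposition~\ref{propAdjoinization} and then to pass to the whole series by a Weierstrass-type argument. Convergence of $\zeta(Q,s)=\sum_{x\in Z}Q(x)^{-s}$ for every $Q\in\Q$ and every $s>s_0$ is immediate from Lemma~\ref{lemConvergence}, since $s_0=s_0(Z)$ depends only on $Z$. For differentiability I would fix $Q_0\in\Q$ and use the parametrization of Proposition~\ref{propAdjoinization}: after fixing a basis of $\gp_{Q_0}$, the smooth map $H\mapsto Q_H$ (with $H\in\gp_{Q_0}$) identifies $\Q$ with $\gp_{Q_0}$, so it suffices to prove that $H\mapsto\sum_{x\in Z}Q_H(x)^{-s}$ is $C^\infty$ on each closed ball $\norm{H}_{Q_0}\le\epsilon$.

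The heart of the matter is a uniform estimate on the $H$-derivatives of the individual terms. Writing $Q_H(x)=Q_0(x,\exp(H)x)$ and factoring out $Q_0(x)$, set
\[
  f(H,x)=\frac{Q_H(x)}{Q_0(x)}=\frac{Q_0(x,\exp(H)x)}{Q_0(x)}.
\]
Since $\exp(H)$ is $Q_0$-selfadjoint and positive definite with $Q_0$-eigenvalues $e^{\mu}$, $\abs{\mu}\le\norm{H}_{Q_0}\le\epsilon$, we have $e^{-\epsilon}\le f(H,x)\le e^{\epsilon}$. Moreover $f$ is homogeneous of degree $0$ in $x$, so it descends to a smooth function on $\gp_{Q_0}\times\PP(V)$. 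As $\PP(V)$ and the ball $\{\norm{H}_{Q_0}\le\epsilon\}$ are compact, the values of $f$ stay in the fixed interval $[e^{-\epsilon},e^{\epsilon}]\subset(0,\infty)$ and every $H$-partial derivative of $f$ is bounded independently of $x$. Composing with the smooth map $u\mapsto u^{-s}$ on $[e^{-\epsilon},e^{\epsilon}]$, the chain rule then yields, for each multi-index $\alpha$, a constant $C_{|\alpha|}$ (depending on $s$, $\epsilon$, and the chosen basis of $\gp_{Q_0}$, but not on $x$) with
\[
  \abs{\partial_H^{\alpha}\,Q_H(x)^{-s}}
  = Q_0(x)^{-s}\,\abs{\partial_H^{\alpha}\,f(H,x)^{-s}}
  \le C_{|\alpha|}\,Q_0(x)^{-s}
\]
for all $x\in Z$ and all $\norm{H}_{Q_0}\le\epsilon$.

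Because $s>s_0(Z)$, the numerical series $\sum_{x\in Z}Q_0(x)^{-s}$ converges (Lemma~\ref{lemConvergence}); the Weierstrass $M$-test then shows that for every $\alpha$ the series $\sum_{x\in Z}\partial_H^{\alpha}Q_H(x)^{-s}$ converges uniformly on the ball $\norm{H}_{Q_0}\le\epsilon$. By the standard theorem on term-by-term differentiation (uniform convergence of the series of all partial derivatives, together with convergence at one point, forces the sum to be $C^\infty$ with derivatives obtained termwise), $H\mapsto\zeta(Q_H,s)$ is indefinitely differentiable, and this holds near every $Q_0\in\Q$. The only real obstacle is the uniform-in-$x$ bound on the derivatives: the difficulty is that $x$ ranges over the unbounded set $Z$, and it is removed precisely by the observation that the relevant factor $f$ depends on $x$ only through $[x]\in\PP(V)$, so that compactness of projective space supplies the needed uniformity.
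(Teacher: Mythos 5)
Your proof is correct and takes essentially the same approach as the paper: the paper's entire proof is the remark that the successive derivatives of $Q\mapsto Q(x)^{-s}$ are bounded by a constant multiple of $Q(x)^{-s}$, which together with the comparison estimate from the proof of Lemma~\ref{lemConvergence} yields uniform convergence of the differentiated series. Your uniform-in-$x$ bound $\abs{\partial_H^{\alpha}\,Q_H(x)^{-s}}\le C_{|\alpha|}\,Q_0(x)^{-s}$, obtained via degree-$0$ homogeneity and compactness of $\PP(V)$ (equivalently the spectral bound $e^{-\epsilon}\le Q_H(x)/Q_0(x)\le e^{\epsilon}$), is precisely the substance behind that remark, and the Weierstrass term-by-term differentiation argument then concludes exactly as the paper intends.
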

\begin{proof}
  It is a direct consequence of the proof of the previous lemma,
  using the fact that the successive derivatives of $Q\mapsto Q(x)^{-s}$
  are bounded by a bound proportional to $Q(x)^{-s}$.
\end{proof}

\begin{Definition}\label{defZetaExtreme}
  Let $\quartet{V}{\Q}{Z}{\G}$ be a Voronoi space,
  and let $s_0=s_0(Z)$ as in Lemma~\ref{lemConvergence}.
  Let $Q\in\Q$.
  \begin{enumerate}[(i)]
   \item For $s>s_0$, $Q$ is \emph{\pf{\zeta}-extreme at~$s$} when
   it is a local minimum of the function $Q\mapsto\zeta(Q,s)$.
   \item $Q$ is \emph{finally \pf{\zeta}-extreme} when
   there exists an $s>s_0$ such that $Q$ is \pf{\zeta}-extreme at any $s'>s$.
  \end{enumerate}
\end{Definition}

\subsection{Characterization of final $\zeta$-extremality}
In this section, $\quartet{V}{\Q}{Z}{\G}$ denotes a Voronoi space with $s_0<\infty$,
$Z=-Z$, and $\card{Z}=\infty$.

In order to study the \pf{\zeta}-extremality of a quadratic form~$Q$,
we need to approximate the function $\zeta$ in a neighborhood of $Q$:
\begin{Lemma}\label{lemApproximationZeta}
  Let $s>s_0$, $Q\in\Q$, $H\in\gp_Q$, and $Q_H(x)=Q(x,\exp(H)\,x)$. We have
  \begin{multline*}
    \zeta(Q_H,s) =
      \sum_{x\in Z} Q(x)^{-s}
        \Biggl( 1 - s\frac{Q(x,Hx)}{Q(x)}
          + \frac{s^2}{2}\frac{Q(x,Hx)^2}{Q(x)^2} \\
          - \frac{s}{2}\biggl(\frac{Q(Hx)}{Q(x)}-\frac{Q(x,Hx)^2}{Q(x)^2}\biggr)
          + O\bigl(\norm{H}_Q^3\bigr)
        \Biggr).
  \end{multline*}
\end{Lemma}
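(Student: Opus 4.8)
The plan is to expand each summand $Q_H(x)^{-s}$ as a function of $\norm{H}_Q$ up to second order, with an error that is bounded \emph{uniformly} in $x\in Z$ by a constant times $Q(x)^{-s}\norm{H}_Q^3$, and then to sum over $x\in Z$, using the absolute convergence of $\zeta(Q,s)$ (Lemma~\ref{lemConvergence}) to collect all the remainders into a single $O(\norm{H}_Q^3)$. Throughout I work with $\norm{H}_Q$ small, say $\norm{H}_Q\le1$.

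First I would expand $Q_H(x)=Q(x,\exp(H)x)=\sum_{k\ge0}\tfrac1{k!}Q(x,H^kx)$. Since $H\in\gp_Q$ is $Q$-selfadjoint, the terms $k=0,1,2$ give $Q(x)$, $Q(x,Hx)$ and $\tfrac12 Q(Hx)$ respectively. The Cauchy--Schwarz inequality for the positive definite form $Q$, together with the definition of $\norm{\cdot}_Q$, gives $\abs{Q(x,H^kx)}\le\norm{H}_Q^kQ(x)$, so for $\norm{H}_Q\le1$ the tail over $k\ge3$ is bounded in absolute value by $\bigl(\sum_{k\ge3}\tfrac1{k!}\bigr)\norm{H}_Q^3Q(x)$. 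Dividing by $Q(x)$, I obtain
\[
  \frac{Q_H(x)}{Q(x)} = 1 + a_x + \tfrac12 b_x + r_x,\qquad
  a_x=\frac{Q(x,Hx)}{Q(x)},\quad b_x=\frac{Q(Hx)}{Q(x)},
\]
where $\abs{a_x}\le\norm{H}_Q$, $0\le b_x\le\norm{H}_Q^2$, and $\abs{r_x}\le C\norm{H}_Q^3$ with $C$ independent of~$x$.

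Next I would raise to the power $-s$. Writing $u_x=a_x+\tfrac12 b_x+r_x$, we have $\abs{u_x}\le C'\norm{H}_Q$, so for $\norm{H}_Q$ small enough $\abs{u_x}\le\tfrac12$ uniformly in $x$; Taylor's theorem for $u\mapsto(1+u)^{-s}$ on $[-\tfrac12,\tfrac12]$ then gives
\[
  (1+u_x)^{-s} = 1 - s\,u_x + \tfrac{s(s+1)}2\,u_x^2 + O(\norm{H}_Q^3),
\]
with an $O$-constant depending only on $s$ (through a bound on the third derivative of $(1+u)^{-s}$). Substituting $u_x=a_x+\tfrac12 b_x+r_x$ and discarding everything of order $\norm{H}_Q^3$ (in particular $s\,r_x$, the cross term $a_xb_x$, and $b_x^2$), the linear part contributes $-s a_x-\tfrac s2 b_x$ and the quadratic part contributes $\tfrac{s(s+1)}2 a_x^2$. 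Collecting and using $\tfrac{s(s+1)}2=\tfrac{s^2}2+\tfrac s2$ rewrites the bracket exactly in the stated form $1-s a_x+\tfrac{s^2}2 a_x^2-\tfrac s2(b_x-a_x^2)+O(\norm{H}_Q^3)$.

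Finally, multiplying by $Q(x)^{-s}$ and summing over $x\in Z$ produces the displayed main terms, while the remainders sum to $O(\norm{H}_Q^3)\sum_{x\in Z}Q(x)^{-s}=O(\norm{H}_Q^3)\,\zeta(Q,s)$, which is $O(\norm{H}_Q^3)$ because $s>s_0$. The main obstacle is precisely this last uniformity: one must verify that the per-term error constant does not depend on~$x$, so that the termwise Taylor expansion may legitimately be summed and the total error stays $O(\norm{H}_Q^3)$ rather than blowing up. This is guaranteed by the operator-norm bound $\abs{Q(x,H^kx)}\le\norm{H}_Q^kQ(x)$, which is uniform in~$x$, and by the absolute convergence of the zeta series for $s>s_0$ (Lemma~\ref{lemConvergence}), which also justifies interchanging the expansion with the summation.
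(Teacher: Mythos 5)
Your proof is correct and follows essentially the same route as the paper's: expand $Q_H(x)=Q(x,\exp(H)x)$ using $H=H^*_Q$, factor out $Q(x)^{-s}$, apply $(1+\alpha)^{-s}=1-s\alpha+\frac{s(s+1)}{2}\alpha^2+O(\abs{\alpha}^3)$, regroup via $\frac{s(s+1)}{2}=\frac{s^2}{2}+\frac{s}{2}$, and sum over $Z$ using the absolute convergence from Lemma~\ref{lemConvergence}. Your explicit check that the per-term error constants are uniform in $x$ (via $\abs{Q(x,H^kx)}\le\norm{H}_Q^k\,Q(x)$) spells out a point the paper's proof leaves implicit, but the argument is the same.
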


\begin{proof}
We have, using $H=H^*_Q$,
\begin{align*}
  Q_H(x)^{-s}
  &= Q\bigl(x, \exp(H)\,x\bigr)^{-s} \\
  &= \Bigl(Q(x) + Q(x,Hx) + \frac12 Q(x,H^2x) + O\bigl(\norm{H}_Q^3\bigr)\Bigr)^{-s} \\
  &= Q(x)^{-s}\biggl(1 + \frac{Q(x,Hx)}{Q(x)} + \frac12\frac{Q(x,H^2x)}{Q(x)} + O\bigl(\norm{H}_Q^3\bigr)\biggr)^{-s}.
\end{align*}
Using $(1+\alpha)^{-s} = 1-s\alpha + \bigl(s(s+1)/2\bigr)\alpha^2 + O(\abs{\alpha}^3)$, we get:
\begin{multline*}
  Q_H(x)^{-s}
    = Q(x)^{-s}\Biggl(1 - s\frac{Q(x,Hx)}{Q(x)} - \frac{s}{2}\frac{Q(x,H^2x)}{Q(x)}  \\[-\jot]
    \shoveright{ + \frac{s(s+1)}{2}\frac{Q(x,Hx)^2}{Q(x)^2}+ O\bigl(\norm{H}_Q^3\bigr)\Biggr) }\\
  \shoveleft{ \hphantom{Q_H(x)^{-s}} = Q(x)^{-s} \Biggl( 1 - s\frac{Q(x,Hx)}{Q(x)} + \frac{s^2}{2}\frac{Q(x,Hx)^2}{Q(x)^2} } \\[-\jot]
    - \frac{s}{2}\biggl(\frac{Q(Hx,Hx)}{Q(x)}-\frac{Q(x,Hx)^2}{Q(x)^2}\biggr)\Biggr) + O\bigl(\norm{H}_Q^3\bigr).
\end{multline*}
To get the claim of the lemma, it remains to sum on $x\in Z$
and to use the absolute convergence of the series.
\end{proof}

Recall that, for $Q\in\Q$, the layer of square radius~$r$ is the set
\begin{equation*}
  Q_r = \{x\in Z \mid Q(x)=r\}.
\end{equation*}
The following result has been proved by Delone and Ryshkov
in the case of classic Voronoi spaces \cite{DeloneRyshkov},
and recently by R.~Coulangeon in the case of Humbert forms
\cite{CoulangeonEpsteinHumbert}.

\begin{Theorem}\label{thmDeloneRyshkov}
  (Compare with Theorem~\ref{thmVoronoi}.)
  Let $\quartet{V}{\Q}{Z}{\G}$ be a Voronoi space such that
  $Z=-Z$ and $s_0<\infty$, and let $Q\in\Q$.
  \begin{enumerate}[(i)]
    \item If $(Q,Q_r)$ is strongly eutactic for every $r>0$ such that $Q_r\ne0$,
      and if $(Q,Q_{\min})$ is perfect,
      then $Q$ is finally \pf{\zeta}-extreme.
    \item If $Q$ is finally \pf{\zeta}-extreme, then $(Q,Q_r)$ is strongly eutactic
    for every $r>0$ such that $Q_r\ne0$, and $(Q,Q_{\min})$ is weakly perfect.
  \end{enumerate}
\end{Theorem}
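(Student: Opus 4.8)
The plan is to substitute $Q_H$, $H\in\gp_Q$, into the expansion of Lemma~\ref{lemApproximationZeta} and read off the first- and second-order behaviour in $H$, organised layer by layer. Write $r_1<r_2<\dots$ for the distinct values of $Q$ on $Z$, so that $r_1=\gamma(Q)$ and $Q_{r_1}=Q_{\min}$, and abbreviate $f_H(x)=Q(x,Hx)/Q(x)$. The lemma yields
\[
  \zeta(Q_H,s)-\zeta(Q,s) = T_1(H,s) + T_2(H,s) + O\bigl(\norm{H}_Q^3\bigr),
\]
with $T_1(H,s)=-s\sum_{x\in Z}Q(x)^{-s}f_H(x)$ linear in $H$ and, collecting the $f_H^2$ and $g$ terms,
\[
  T_2(H,s) = \tfrac{s(s+1)}{2}A(H,s) - \tfrac{s}{2}B(H,s),\quad
  A=\sum_{x\in Z}Q(x)^{-s-2}Q(x,Hx)^2,\quad B=\sum_{x\in Z}Q(x)^{-s-1}Q(Hx),
\]
quadratic in $H$, with $A,B\ge0$. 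For fixed $s>s_0$ the map $H\mapsto\zeta(Q_H,s)$ is smooth (Lemma~\ref{lemDifferentiable}), so local minimality of $\zeta(\cdot,s)$ at $Q$ is governed by the vanishing of $T_1$ together with the semidefiniteness of the Hessian $T_2$, the cubic remainder being harmless once $s$ is frozen.

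First I would treat $T_1$. Since $Q\equiv r_i$ on $Q_{r_i}$, grouping by layers gives $T_1(H,s)=-s\sum_i r_i^{-s-1}c_i(H)$ with $c_i(H)=\sum_{x\in Q_{r_i}}Q(x,Hx)$, and $c_i(H)=0$ is exactly strong eutaxy of $(Q,Q_{r_i})$. If every layer is strongly eutactic then $T_1\equiv0$ and $Q$ is a critical point; conversely, if $Q$ is $\zeta$-extreme for all large $s$ then $T_1(H,s)=0$ for all such $s$, and dividing by $r_1^{-s-1}$ and letting $s\to\infty$ (justified by the absolute convergence of $\zeta$, which dominates the tail) forces $c_1(H)=0$, after which one peels off the layers one at a time to get $c_i(H)=0$ for every $i$. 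This settles the strong-eutaxy assertion in both directions.

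It then remains to analyse $T_2$ under the standing strong eutaxy. The decisive point is that, after normalising by the minimal-layer weight $r_1^{-s}$, the sums concentrate on $Q_{\min}$ as $s\to\infty$: using $\abs{Q(x,Hx)}\le\norm{H}_Q Q(x)$ and the convergence of $\sum_x(r_1/Q(x))^s=r_1^s\zeta(Q,s)$, one checks that $r_1^{s}A(H,s)\to r_1^{-2}\sum_{x\in Q_{\min}}Q(x,Hx)^2=:A_\infty(H)$ and $r_1^{s}B(H,s)\to r_1^{-1}\sum_{x\in Q_{\min}}Q(Hx)=:B_\infty(H)$, uniformly for $H$ in the unit sphere of $(\gp_Q,\norm{\cdot}_Q)$. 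For direction~(i), perfection of $(Q,Q_{\min})$ says precisely that $A_\infty(H)=0$ forces $H=0$; hence $A_\infty$ is continuous and strictly positive on the compact unit sphere, so $A_\infty\ge\kappa>0$ there while $B_\infty$ stays bounded by some $M$. Consequently $r_1^sT_2(H,s)\ge\tfrac{s(s+1)}{2}\cdot\tfrac{\kappa}{2}-\tfrac{s}{2}M>0$ for all large $s$, uniformly on the sphere: the Hessian is positive definite, so $Q$ is a strict local minimum of $\zeta(\cdot,s)$ for every large $s$, i.e.\ finally $\zeta$-extreme.

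For direction~(ii), local minimality forces $T_2(H,s)\ge0$ for every $H\in\gp_Q$ and every large $s$. Suppose weak perfection of $(Q,Q_{\min})$ fails; since strong eutaxy is already established, averaging over $Q_{\min}$ forces the constant in the definition to be $0$, so there is $H\in\gp_Q$ with $Q(x,Hx)=0$ for all $x\in Q_{\min}$ yet $Hx_0\ne0$ for some $x_0\in Q_{\min}$. For this $H$ we have $A_\infty(H)=0$ while $B_\infty(H)\ge r_1^{-1}Q(Hx_0)>0$ by positive definiteness of $Q$. As the minimal-layer contribution to $A$ now vanishes, $A$ is controlled by the next layer, $r_1^sA(H,s)=O\bigl((r_1/r_2)^{s}\bigr)$, whence $s^2\,r_1^sA(H,s)\to0$ whereas $s\,r_1^sB(H,s)\to\infty$; thus $r_1^sT_2(H,s)\to-\infty$, contradicting $T_2\ge0$, and weak perfection follows. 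The hard part throughout will be the uniform control of these $s\to\infty$ asymptotics over the compact unit sphere---proving that $r_1^sA$ and $r_1^sB$ converge uniformly to their minimal-layer limits with exponential decay of the tail, so that one threshold $s^\ast$ serves all directions $H$ at once; granted that, the $s^2$-versus-$s$ comparison between the two pieces of $T_2$ does the rest.
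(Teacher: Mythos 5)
Your proposal is correct and takes essentially the same route as the paper's own proof: the same expansion from Lemma~\ref{lemApproximationZeta} (your $\tfrac{s(s+1)}{2}A-\tfrac{s}{2}B$ is an algebraic rearrangement of the paper's layerwise $\tfrac{s^2}{2}C_i-\tfrac{s}{2}D_i$), vanishing of the linear term iff every layer is strongly eutactic via the same $s\to\infty$ layer-peeling, positive definiteness of the Hessian from perfection by compactness of the unit sphere of $\gp_Q$, and the same $s^2$-versus-$s$ comparison with an $H$ annihilating $Q(x,Hx)$ on $Q_{\min}$ for the weak-perfection converse. The uniform tail control you flag as the remaining work is exactly the paper's estimate $\abs{S}\le\norm{H}_Q\,\zeta(Q,s')$ with $\zeta(Q,s')\to0$, so nothing essential is missing.
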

\begin{proof}
  Let  $r_0=\gamma(Q)<r_1<r_2<\dotsb$ be the square radii of the nonempty layers of $Q$.
  We have, according to Lemma~\ref{lemApproximationZeta},
  \begin{equation*}
    \zeta(Q_H,s) = \zeta(Q,s) + A(H) + B(H) + O\bigl(\norm{H}_Q^3\bigr),
  \end{equation*}
  where $A$ is a linear form on $\gp_Q$ and $B$ is a quadratic form on $\gp_Q$.
  More precisely,
  \begin{gather*}
    A(H) = -s\sum_{i\ge0} r_i^{-s} \sum_{x\in Q_{r_i}} \frac{Q(x,Hx)}{Q(x)}, \\
    B(H) = \sum_{i\ge0}\biggl(\frac{s^2}{2}r_i^{-s} C_i(H) - \frac{s}{2}r_i^{-s}D_i(H)\biggr),
  \end{gather*}
  where
  \begin{gather*}
    C_i(H) = \sum_{x\in Q_{r_i}}\frac{Q(x,Hx)^2}{Q(x)^2}, \\
    D_i(H) = \sum_{x\in Q_{r_i}}\biggl(\frac{Q(Hx)}{Q(x)}-\frac{Q(x,Hx)^2}{Q(x)^2}\biggr).
  \end{gather*}
  Using the definition of the \pf{Q}-norm and the triangular equality, we get
  \begin{equation*}
    0 \le C_i(H) \le \card{Q_{r_i}}\norm{H}_Q^2 \quad \text{and} \quad
    0 \le D_i(H) \le \card{Q_{r_i}}\norm{H}_Q^2.
  \end{equation*}
  To prove the theorem, we use the classic criterion for minima
  on twice continuously differentiable functions:
  \begin{Theorem}
    Let $W$ be a finite-dimensional real vector space, and let $f:W\to\RR$ be a twice continuously differentiable function
    such that
    \begin{equation*}
      f(H) = f(0) + f'_0(H) + f''_0(H) + O\bigl(\norm{H}^3\bigr),
    \end{equation*}
    where $f'_0$ and $f''_0$ are respectively linear and quadratic forms.
    Then, if $f'_0(H)=0$ and $f''_0(H)>0$ for all $H\in W\smz$,
    then $0$ is a local minimum of~$f$.
    Conversely, if $0$ is a (strict) local minimum of~$f$,
    then $f'_0(H)=0$ and $f''_0(H)\ge0$ for all $H\in W$.
  \end{Theorem} 
  Applying the criterion to the function $H\mapsto\zeta(Q_H,s)$, we get:
  \emph{If $A(H)=0$ and $B(H)>0$ for all $H\in\gp_Q\smz$,
    then $Q$ is (strictly) \pf{\zeta}-extreme at~$s$.
    Conversely, if $Q$ is \pf{\zeta}-extreme at~$s$,
    then $A(H)=0$ and $B(H)\ge0$ for all $H\in\gp_Q$.}
  This criterion, together with Claims~A and~B below, achieves the proof.

  \begingroup\medskip\noindent\itshape
    \noindent Claim A. The following are equivalent:
    \begin{enumerate}[(a)]
      \item $A(H)= 0$ for every $H\in\gp_Q$ and for $s$ large enough;
      \item $(Q,Q_{r_i})$ is strongly eutactic for every $i$.
    \end{enumerate}
  \endgroup
  It is clear that (b) implies (a).
  Conversely, suppose that there exists an index~$i$
  such that $(Q,Q_{r_i})$ is not strongly eutactic.
  Let $k$ be the smaller index for which $(Q,Q_{r_k})$
  is not strongly eutactic,
  and let $H\in\gp_Q$ such that $p:=\sum_{x\in Q_{r_i}} Q(x,Hx)/Q(x)\ne0$.
  We have
  \begin{equation*}
    A(H) = -s r_k^{-s} (p+ S),
  \end{equation*}
  where
  \begin{equation*}
    \abs{S}
    \le \biggl|\sum_{i=k+1}^{\infty} \Bigl(\frac{r_{i}}{r_{k}}\Bigr)^{-s}
          \sum_{x\in Q_{r_i}} \frac{Q(x,Hx)}{Q(x)}\biggr|
    \le\norm{H}\sum_{i=k+1}^{\infty}\Bigl(\frac{r_{i}}{r_{k}}\Bigr)^{-s}.
  \end{equation*}
  Now, for $s'=s\bigl(1-\log(r_{k})/\log(r_{k+1})\bigr)$,
  we have $(r_i/r_k)^{-s}\le r_i^{-s'}$ for $i\ge k+1$.
  Therefore,
  \begin{equation*}
    \abs{S} \le \norm{H}_Q\sum_{i\ge0}r_i^{-s'}
      = \norm{H}_Q\zeta(Q,s').
  \end{equation*}
  As $\zeta(Q,s')$ tends to zero when $s$ tends to the infinity,
  for $s$ large enough, we have $\abs{S}\le p/2$,
  and therefore $A(H)\ne0$. This achieves the proof of Claim~A.

  \begingroup\medskip\noindent\itshape
    Claim B. If $(Q,Q_{\min})$ is perfect,
    then $B(H)>0$ for every $H\in\gp_Q\smz$ and for $s$ large enough.
    Conversely, if $B(H)\ge0$ for every $H\in\gp_Q$ and for $s$ large enough,
    and if $(Q,Q_{\min})$ is eutactic,
    then  $(Q,Q_{\min})$ is weakly perfect.
  \endgroup

  \smallskip
  If $(Q,Q_{\min})$ is perfect, then $C_0(H)>0$ when $H\in\gp_Q\smz$.
  The function $f:[H]\mapsto C_0(H)/\norm{H}_Q^2$ defined on $\PP(\gp_Q)$
  is continuous and positive,
  and $\PP(\gp_Q)$ is compact; therefore there exists a $\delta>0$ such that
  $f\ge\delta$, that is
  \begin{equation*}
    C_0(H)\ge\delta\norm{H}_Q^2,\quad \forall H\in\gp_Q.
  \end{equation*}
  Thus, we have
  \begin{gather*}
    B(H) \ge \frac{s^2r_0^{-s}}{2}\Bigl(\delta\norm{H}_Q^2 + S\Bigr), \\
    S = - s^{-1}D_0(H) + \sum_{i=1}^{\infty}\Bigl(\frac{r_i}{r_k}\Bigr)^{-s} \bigl(C_i(H)-s^{-1}D_i(H)\bigr)
  \end{gather*}
  With an argument similar to the one of the proof of Claim~A, it is shown that
  $\abs{S}\le\delta\norm{H}_Q^2/2$ for $s$ large enough,
  and therefore $B(H)>0$ for every $H\in\gp_Q\smz$.

  Conversely, suppose, that $(Q,Q_{\min})$ is eutactic but not weakly perfect.
  Then there exists an $H_0\in\gp_Q$,
  an $x\in Q_{\min}$ and a $c\in\RR$ such that $(H_0-c)x\ne0$
  but $Q(y,(H_0-c)y)=0$
  for every $y\in Q_{\min}$;
  the eutaxy of $(Q,Q_{\min})$ then implies that $c=0$.
  So, we have $C_0(H_0)=0$ and $D_0(H_0)=p>0$. So,
  \begin{gather*}
    B(H_0) = \frac{sr_0^{-s}}{2}\bigl(-p + S\bigr), \\
    S = \sum_{i=1}^{\infty} \Bigl(\frac{r_i}{r_k}\Bigr)^{-s} \bigl(s\,C_i(H_0)-D_i(H_0)\bigr).
  \end{gather*}
  As before, $\abs{S}\le p/2$ for $s$ sufficiently large,
  and therefore $B(H_0)<0$.
  This achieves the proof of Claim~B and of the theorem.
\end{proof}

\subsection{Criterion of $\zeta$-extremality in terms of design}

In this paragraph, we work with alternative Voronoi spaces,
since they are more convenient when talking about designs.
All the definitions and results from the two previous paragraphs
can be formulated in the language of alternative Voronoi spaces.

The following result has been proved by Coulangeon
in the case of the classic alternative Voronoi space
\cite[Theorem~1]{CoulangeonEpstein};
note however that our result generalizes only
partially the theorem of Coulangeon.
Recall that we use the brackets $[\ ]$ to denote the image
through the projection $V\smz\to\PP(V)$.

\begin{Theorem}\label{thmVenkovEpstein}
  (Compare with Corollary~\ref{corPerfectionCriterion}.)
  Let $\quartet{V}{Q}{\Z}{\G}$ be a regular alternative Voronoi space
  such that $Z=-Z$.
  Then there exists a $s_1>0$ with the following property:
  For $Z\in\Z$,
  if $[Z_r]$ is a \pf{4}-design for every~$r\ge0$ such that $Z_r\ne0$,
  then $Z$ is \pf{\zeta}-extreme for every $s>s_1$.
\end{Theorem}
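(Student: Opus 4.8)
The plan is to feed the design hypothesis into the second--order expansion of the Epstein zeta function already set up in the proof of Theorem~\ref{thmDeloneRyshkov}, exploiting that every layer is a $4$--design to make the whole computation collapse uniformly in~$Z$. Working in the alternative formulation, a neighborhood of $Z$ is given by $Z_H=\exp(H/2)Z$ with $H\in\gp$ (Proposition~\ref{propAdjoinizationBis}); since $H^*_Q=H$ one has $\exp(H/2)^*_Q\exp(H/2)=\exp(H)$, whence $\zeta(Q,Z_H,s)=\sum_{x\in Z}Q(x,\exp(H)x)^{-s}$, so Lemma~\ref{lemApproximationZeta} applies with $Q_H(x)=Q(x,\exp(H)x)$. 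Letting $r_0<r_1<\dots$ be the square radii of the nonempty layers, I would start from
\begin{equation*}
  \zeta(Q,Z_H,s)=\zeta(Q,Z,s)+A(H)+B(H)+O\bigl(\norm{H}_Q^3\bigr),
\end{equation*}
with $A$, $B$, $C_i$, $D_i$ as in that proof, and then verify the two conditions of the classical second--order criterion quoted there: $A(H)=0$ for all $H\in\gp$, and $B(H)>0$ for all $H\in\gp\smz$.

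The first condition is immediate. The function $x\mapsto Q(x,Hx)/Q(x)$ lies in $\Fc_{\{2\}}\subseteq\Fc_4$ and has average $0$ (Lemma~\ref{lemAverageQuadraticFunction}); as $[Z_{r_i}]$ is a $4$--design, its sum over $[Z_{r_i}]$ vanishes, and by antipodality so does the sum over $Z_{r_i}$, killing every term of $A$. For $B$ the point is that $x\mapsto\bigl(Q(x,Hx)/Q(x)\bigr)^2$ lies in $\Fc_{\{2,2\}}$, and, using $H^2\in\pp_2$ and hence $Q(Hx)/Q(x)=Q(x,H^2x)/Q(x)\in\Fc_{\{4\}}$, the integrand $x\mapsto Q(Hx)/Q(x)-\bigl(Q(x,Hx)/Q(x)\bigr)^2$ of $D_i$ lies in $\Fc_4$. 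The $4$--design property of $[Z_{r_i}]$ then equates each \emph{layer} average with the corresponding \emph{global} average, giving
\begin{equation*}
  C_i(H)=\card{Z_{r_i}}\,\beta(H),\qquad D_i(H)=\card{Z_{r_i}}\,\delta(H),
\end{equation*}
where $\beta(H)=\ave{\bigl(Q(\cdot,H\cdot)/Q(\cdot)\bigr)^2}$ and $\delta(H)=\ave{Q(H\cdot)/Q(\cdot)-\bigl(Q(\cdot,H\cdot)/Q(\cdot)\bigr)^2}$ depend only on $H$ (and on $Q$, $\Omega$, $\mu$), and in particular \emph{not} on the index $i$ nor on the set $Z$.

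Substituting and summing, $B$ collapses to a fixed multiple of the zeta value:
\begin{align*}
  B(H)&=\sum_{i\ge0}r_i^{-s}\Bigl(\tfrac{s^2}{2}C_i(H)-\tfrac{s}{2}D_i(H)\Bigr)\\
  &=\frac{s}{2}\bigl(s\,\beta(H)-\delta(H)\bigr)\sum_{i\ge0}r_i^{-s}\card{Z_{r_i}}=\frac{s}{2}\bigl(s\,\beta(H)-\delta(H)\bigr)\,\zeta(Q,Z,s),
\end{align*}
since $\sum_{i\ge0}r_i^{-s}\card{Z_{r_i}}=\zeta(Q,Z,s)>0$. Moreover $\beta(H)>0$ for $H\ne0$: otherwise the nonnegative continuous integrand would vanish $\mu$--almost everywhere, hence everywhere on $\Omega$, forcing $Q(\cdot,H\cdot)/Q(\cdot)\equiv0$ and thus $H=0$ by Lemma~\ref{lemNoConstantFunction}. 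Since $\beta$ and $\delta$ are continuous quadratic forms on $\gp$ with $\beta>0$ off the origin, I would set
\begin{equation*}
  s_1=\max_{H\in\Se(\gp)}\frac{\delta(H)}{\beta(H)},
\end{equation*}
a finite number depending only on the alternative Voronoi space. For every $s>s_1$ we get $s\,\beta(H)-\delta(H)>0$, hence $B(H)>0$ for all $H\in\gp\smz$; together with $A\equiv0$ the second--order criterion shows that $Z$ is (strictly) $\zeta$--extreme at~$s$, for every admissible $Z$ and every $s>s_1$ (understood, as always, with $s>s_0(Z)$ so that $\zeta$--extremality is defined).

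The main obstacle, and the whole reason for requiring a $4$--design on \emph{every} layer, is the uniformity in $Z$: it is exactly the design identity that replaces the layerwise quantities $C_i(H),D_i(H)$ by the \emph{same} global averages $\card{Z_{r_i}}\beta(H)$, $\card{Z_{r_i}}\delta(H)$, so that $B$ becomes $\frac{s}{2}(s\beta-\delta)\zeta(Q,Z,s)$ and the threshold $s_1$ is freed from any dependence on $Z$. The remaining points are routine: the memberships in $\Fc_{\{2,2\}}$ and $\Fc_{\{4\}}$ (i.e. $H^2\in\pp_2$), the passage between projective sums over $[Z_{r_i}]$ and affine sums over $Z_{r_i}$ furnished by $Z=-Z$, and the convergence and termwise manipulations, which are inherited from the bounds $0\le C_i(H),D_i(H)\le\card{Z_{r_i}}\norm{H}_Q^2$ used in the proof of Theorem~\ref{thmDeloneRyshkov}.
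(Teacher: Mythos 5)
Your proof is correct and takes essentially the same route as the paper's: the same expansion from Lemma~\ref{lemApproximationZeta} with $Z_H=\exp(H/2)Z$, the same use of the $4$\nobreakdash-design hypothesis (via Lemma~\ref{lemAverageQuadraticFunction} and the memberships $\beta_H\in\Fc_{\{2,2\}}$, $\gamma_H\in\Fc_4$) to replace each layer sum by $\card{Z_{r_i}}$ times a global average independent of $i$ and of $Z$, and the same threshold $s_1$ defined as the maximum of $\ave{\gamma_H}/\ave{\beta_H}$ over the projectivized $\gp$. The only harmless variations are that you conclude via the second-order criterion quoted in the proof of Theorem~\ref{thmDeloneRyshkov} rather than the paper's direct final inequality, and that you explicitly verify $\ave{\beta_H}>0$ for $H\ne0$ using Lemma~\ref{lemNoConstantFunction}, a point the paper leaves implicit when asserting that $f$ takes values in $\mathopen]0,\infty\mathclose[$.
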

The constant~$s_1$
of the theorem can be computed explicitly by the formula
given in the proof below.
For example in the case of the classic
alternative Voronoi space of rank~$n$,
we have $s_1=n/2$~\cite{CoulangeonEpstein}.

\begin{proof}
  Let $Z\in\Z$ and let $r_0=\gamma(Q)<r_1<r_2<\dotsb$ be the square radii
  of the nonempty layers of $Z$; and suppose that
  $[Z_{r_i}]$ is a \pf{4}-design for every~$i$.
  Recall (Proposition~\ref{propAdjoinizationBis}) that a set in the neighborhood of $Z$
  is of the form $kZ_H$ with $Z_H = \exp(H/2)Z$, $k\in\GK$, and $H\in\gp$ with $\norm{H}_Q$ small.
  From Lemma~\ref{lemApproximationZeta}, we have
  \begin{equation*}
    \zeta(Z_H,s) = \zeta(Z,s)
      + \sum_{i\ge0} r_i^{-s} \Bigl( -s\,A_i(H) + \frac{s^2}{2} B_i(H)
          - \frac{s}{2}\,C_i(H) + O\bigl(\norm{H}_Q^3\bigr) \Bigr),
  \end{equation*}
  where
  \begin{gather*}
    A_i(H) = \sum_{x\in Z_{r_i}}\frac{Q(x,Hx)}{Q(x)}, \qquad
    B_i(H) = \sum_{x\in Z_{r_i}}\frac{Q(x,Hx)^2}{Q(x)^2}, \\
    C_i(H)= \sum_{x\in Z_{r_i}} \biggl(\frac{Q(Hx)}{Q(x)}-\frac{Q(x,Hx)^2}{Q(x)^2}\biggr).
  \end{gather*}
  As $[Z_{r_i}]$ is in particular a \pf{\{2\}}-design,
  we have $A_i(H)=0$ by Lemma~\ref{lemAverageQuadraticFunction}. Let
  \begin{equation*}
    \beta_H(x) = \frac{Q(x,Hx)^2}{Q(x)^2} \ge0, \qquad
    \gamma_H(x) = \frac{Q(Hx)}{Q(x)}-\frac{Q(x,Hx)^2}{Q(x)^2} \ge0.
  \end{equation*}
  We have $\beta_H\in\Fc_{\{2,2\}}\subseteq\Fc_4$ and $\gamma_H\in\Fc_{\{2,2\}}+\Fc_{\{4\}}=\Fc_4$.
  Since $[Z_{r_i}]$ is a \pf{4}-design, we have $B_i(H)=\card{Z_{r_i}}\ave{\beta_H}$
  and $C_i(H)=\card{Z_{r_i}}\ave{\gamma_H}$, so
  \begin{align*}
    \zeta(Z_H,s)
     &= \zeta(Z,s) + \sum_{i\ge0} \card{Z_{r_i}} r_i^{-s}
      \Bigl( \frac{s^2}{2}\ave{\beta_H} - \frac{s}{2} \ave{\gamma_H} + O\bigl(\norm{H}_Q^3\bigr) \Bigr), \\
     &= \zeta(Z,s) \Bigl(1+ \frac{s^2}{2} \ave{\beta_H} 
        - \frac{s}{2} \ave{\gamma_H} + O\bigl(\norm{H}_Q^3\bigr) \Bigr).
  \end{align*}
  Now consider the function
  \begin{math}f:\PP(\gp)\to\mathopen]0,\infty\mathclose[\end{math}
  defined by $f([H]) = \ave{\gamma_H}/\ave{\beta_H}$,
  and let $s_1$ be the maximal value of~$f$.
  For $s>s_1$, we have $\ave{\gamma_H}\le s_1\ave{\beta_H}\le s\norm{H}_Q^2$, hence
  \begin{equation*}
    \zeta(Z_H,s)
      \ge \zeta(Z,s) \Bigl(1+ \frac{s(s-s_1)}{2}\norm{H}_Q^2 + O\bigl(\norm{H}_Q^3\bigr) \Bigr).
  \end{equation*}
  We infer that $\zeta(kZ_H,s)=\zeta(Z_H,s)>\zeta(Z,s)$ for $\norm{H}_Q$ sufficiently small.
  Note that the value of $s_1$ depends only on the structure of alternative Voronoi space of $\quartet{V}{Q}{\Z}{\G}$.
\end{proof}

As for standard extremality, we can exploit
the symmetry of the potentially extreme sets
in order to check that there are indeed extreme.
So, using Corollary~\ref{corInvarianceDesign},
we draw the following consequence:

\begin{Corollary}\label{corZetaInvarianceCriterion}
  (Compare with Corollary~\ref{corInvarianceCriterion}.)
  Let $\quartet{V}{Q}{\Z}{\G}$ be a regular alternative Voronoi space,
  and let $Z\in\Z$. Let $\mathbf{F}=\{g\in \GK\mid gZ=Z\}$.
  If the only \pf{\mathbf{F}}-invariant functions in $\Fc_4$
  are the constant functions,
  then $Z$ is \pf{\zeta}-extreme at every $s>s_1$.
\end{Corollary}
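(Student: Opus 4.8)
The plan is to obtain this corollary as a direct consequence of the design-based criterion of Theorem~\ref{thmVenkovEpstein} combined with the invariance criterion of Corollary~\ref{corInvarianceDesign}, in exact parallel with the way Corollary~\ref{corInvarianceCriterion} was deduced from Corollary~\ref{corPerfectionCriterion}. The only hypothesis Theorem~\ref{thmVenkovEpstein} requires is that the projected layer $[Z_r]$ be a \pf{4}-design for every $r$ with $Z_r\ne0$; so the whole task reduces to producing that hypothesis from the assumption on $\Fc_4$, and the real content sits inside those two earlier results. Throughout I use the standing assumption $Z=-Z$ of this section, which is what makes Theorem~\ref{thmVenkovEpstein} applicable.

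First I would record that $\mathbf{F}=\{g\in\GK\mid gZ=Z\}$ is a \emph{finite} subgroup of $\GK$, so that Proposition~\ref{propInvarianceSet} and hence Corollary~\ref{corInvarianceDesign} apply. This follows from $Z$ being closed, discrete and spanning: pick $z_1,\dots,z_{\dim V}\in Z$ forming a basis of $V$, and note each $z_j$ lies in the layer $Z_{r_j}=\{x\in Z\mid Q(x)=r_j\}$, which is finite because it is the intersection of the discrete closed set $Z$ with the compact \pf{Q}-sphere of square radius $r_j$. Any $g\in\mathbf{F}$ is \pf{Q}-orthogonal, so $gz_j\in Z_{r_j}$, and since $g$ is determined by its values on a basis, the map $g\mapsto(gz_1,\dots,gz_{\dim V})$ embeds $\mathbf{F}$ into the finite set $\prod_j Z_{r_j}$. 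Next I fix a square radius $r>0$ with $Z_r\ne0$ and set $X=[Z_r]\subseteq\Omega$, a finite subset since $Z_r$ is finite. Because every $g\in\mathbf{F}$ is \pf{Q}-orthogonal it preserves each layer (from $Q(gx)=Q(x)=r$ and $gx\in gZ=Z$ we get $gx\in Z_r$), hence acts on $X$; taking $W$ to be the constant weight $\card{X}^{-1}$ makes $(X,W)$ invariant under $\mathbf{F}$.

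Now I invoke the hypothesis: the only \pf{\mathbf{F}}-invariant functions in $\Fc_4$ are constant, so Corollary~\ref{corInvarianceDesign} (with $A=4$) shows that $(X,W)$ is a weighted \pf{4}-design; and since $W$ is the constant function $\card{X}^{-1}$, this is exactly the statement that $X=[Z_r]$ is a \pf{4}-design in the sense of Definition~\ref{defDesign}. As $\mathbf{F}$ depends only on $Z$ and not on the chosen layer, the identical argument applies uniformly to every nonempty layer, so $[Z_r]$ is a \pf{4}-design for all $r$ with $Z_r\ne0$. Theorem~\ref{thmVenkovEpstein} then gives that $Z$ is \pf{\zeta}-extreme at every $s>s_1$, with $s_1$ the constant furnished by that theorem, which is the claim. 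I expect no serious obstacle here: the two points needing genuine care are the finiteness of $\mathbf{F}$ and the observation that the \emph{single} spectral condition on $\Fc_4$ feeds \emph{every} layer at once, both of which are handled above; everything substantive has already been carried by Theorem~\ref{thmVenkovEpstein} and Corollary~\ref{corInvarianceDesign}.
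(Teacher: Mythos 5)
Your proof is correct and follows exactly the route the paper intends: the corollary is stated there as an immediate consequence of Theorem~\ref{thmVenkovEpstein} via Corollary~\ref{corInvarianceDesign} applied layer by layer, which is precisely your argument. Your added verifications---the finiteness of $\mathbf{F}$ (needed for Proposition~\ref{propInvarianceSet}), the $\mathbf{F}$-invariance of each projected layer $[Z_r]\subseteq\Omega$, and the standing antipodality assumption $Z=-Z$---are details the paper leaves implicit, and they are all handled correctly.
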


For example, this criterion allows to show that
all the lattices given at the end
of Paragraph~\ref{parDesignInvariant},
are \pf{\zeta}-extreme at any $s>n/2$.

\raggedright

\end{document}